\theoremstyle{remark}
\numberwithin{equation}{section}  
\title{Pre-classification based stochastic reduced-order model for time-dependent complex system}
\author[a]{Meixin Xiong}
\author[a]{Liuhong Chen}
\author[a]{Ju Ming\thanks{Corresponding author: jming@hust.edu.cn}}
\author[b]{Zhiwen Zhang}
\affil[a]{School of Mathematics and Statistics, Huazhong University of Science and Technology, Wuhan 430074, China}
\affil[b]{Department of Mathematics, The University of Hong Kong, Hong Kong Special Administrative Region}
\date{} 
\begin{document}
      \newtheorem{theorem}{Theorem}[section]
      \newtheorem{assumption}[theorem]{Assumption}
      \newtheorem{corollary}[theorem]{Corollary}
      \newtheorem{proposition}[theorem]{Proposition}
      \newtheorem{lemma}[theorem]{Lemma}
      \newtheorem{definition}[theorem]{Definition}
      \newtheorem{algo}[theorem]{Algorithm}
      \newtheorem{remark}[theorem]{Remark}
      
      \maketitle
\noindent \rule[4pt]{17cm}{0.05em}  

\noindent   \textbf{Abstract:} 
We propose a novel stochastic reduced-order model (SROM) for complex systems by combining clustering and classification strategies. Specifically, the distance and centroid of centroidal Voronoi tessellation (CVT) are redefined according to the optimality of proper orthogonal decomposition (POD), thereby obtaining a time-dependent generalized CVT, and each class can generate a set of cluster-based POD (CPOD) basis functions. To learn the classification mechanism of random input, the naive Bayes pre-classifier and clustering results are applied. Then for a new input, the set of CPOD basis functions associated with the predicted label is used to reduce the corresponding model. Rigorous error analysis is shown, and a discussion in stochastic Navier-Stokes equation is given to provide a context for the application of this model. Numerical experiments verify that the accuracy of our SROM is improved compared with the standard POD method. \\

\noindent \textbf{Keywords:} 
naive Bayes pre-classifier, generalized centroidal Voronoi tessellation (gCVT), proper orthogonal decomposition (POD), stochastic reduced-order model (SROM), time-dependent.

\noindent \rule[4pt]{17cm}{0.05em} 

\maketitle

\section{Introduction}\label{sec1}

The reduced-order model (ROM) \cite{quarteroni2014reduced,Amsallem2011An,Amsallem2012Nonlinear} plays a vital role in large-scale simulations, real-time calculations, and optimal control problems, which first introduces a low-dimensional subspace of the state space, and then calculates the coordinates of the system state in this subspace through projection techniques, also known as  reduced state vector. It ensures the essential characteristics of the system while achieves the goal of reducing computational complexity. There are a variety of ways to construct the low-fidelity ROM. Among them, proper orthogonal decomposition (POD) based on the optimal Galerkin projection distance is one of the most successful methods, which has been widely applied in numerous fields, including signal analysis and pattern recognition \cite{han2003application,samir2018damage}, image processing \cite{lim2020short,Oberleithner2011Three}, geophysical fluid dynamics \cite{holmes2012turbulence,berkooz1993proper,Towne2018Spectral,Willcox2006Unsteady}, and biomedical engineering \cite{fathi2018denoising,Grinberg2009Analyzing}. 

In many practical problems, the collected data belongs to categorical data, such as countable qualitative data or grouped quantitative data. Then, the natures of these problems can be further explored through the categorical data analysis \cite{saha2019integrated, agresti2018introduction,yee2010vgam,Wang2020Fuzzy}. Clustering \cite{xu2008clustering,rokach2005clustering} and classification \cite{clifford1975introduction,Loh2011Classification} are two advanced tools. Clustering is a method for statistical analysis of data and has  become an important part of machine learning. It is a process of dividing a given data set into several subsets according to some defined distances. Its purpose is to maximize the intra-cluster similarity and minimize the inter-cluster similarity under the given distance measure. On the one hand, clustering itself is a statistical analysis technique. On the other hand, it is often used as a tool for data exploration, data cleaning, and data organizing in the pre-process stage of other data analysis methods. In the past few decades, clustering approaches have been applied to the numerical simulations of partial differential equations (PDEs), and one of the most popular methods is centroidal Voronoi tessellation (CVT) \cite{du1999centroidal}. Some of the notable works in this area are as follows: Burkardt et al. in \cite{burkardt2006centroidal} introduced a reduced-order modeling methodology based on CVT for complex systems and in \cite{Burkardt2006POD} compared the performance of ROMs based on POD and CVT, Du et al. in \cite{du2003centroidal} proposed a hybrid method named CVT based POD for model reduction, and Kaiser et al. in \cite{kaiser2014cluster} combined the cluster analysis and transition matrix models to propose a novel cluster-based reduced-order modelling strategy for unsteady flows. We refer to \cite{Du2003Voronoi,Du2005Approx,lee2007reduced,du2010advances,Gunzburger2014Stochastic} for further discussions.

Classification is another method of data statistical analysis, which assigns labels to samples according to their features. This method belongs to supervised learning and includes two parts: classifier learning and the prediction/classification of new samples. When a new sample is assigned to the class with the highest similarity, using the data in this class to study the sample can make full use of the existing information and eliminate the redundant information brought by the data in other classes. Recently, the ideas of classification have been applied to the study of PDEs. Bright et al. in \cite{bright2013compressive} combined classification and compressive sensing to determine the flow characteristics around a cylinder and in \cite{bright2016classification} proposed sparse measurements to classify and reconstruct time-dependent data, and Brunton et al. in \cite{brunton2014compressive} developed a classification scheme to determine the region to which the nonlinear dynamical system belongs. More discussions can consult the literatures \cite{brunton2020machine,Liang2013Solving,kramer2017sparse,san2019artificial,Lu2016A,Mehrkanoon2015Learning}.
For a stochastic system, there may be large differences between the realizations of its state in some cases. In order to reduce the model and reconstruct the state better, clustering and classification methods can be combined. The former is used to organize the given data according to similarity, while the latter trains a classifier based on the clustering results for assigning labels to new samples. Then the samples can be studied by using the predicted subsets instead of the entire data set.

In this work, we combine clustering and classification methods to propose a pre-classification based stochastic ROM (SROM) for improving the accuracy of the POD reduced-order solutions of stochastic evolution problems. The method mainly consists of two parts. In the first part, several groups of cluster-based POD (CPOD) basis functions are generated by constructing a time-dependent clustering method. Due to the generalizability of the distance in CVT method, the spatio-temporal projection distance from a function to a multidimensional space is used to define the time-dependent generalized Voronoi tessellation (t-gVT). The corresponding generalized centroid is defined as the subspace spanned by the POD basis functions according to the optimality of POD method. Similar to CVT, the time-dependent generalized CVT (t-gCVT) can be obtained when the generators coincide with the generalized centroids. In order to simplify the construction of t-gCVT, the modified version is introduced by using the snapshots generated at several discrete time points to approximately calculate the time integral in generalized distance, and using the Monte Carlo (MC) method to estimate the expectation of projection distance. From this, the spatio-temporal data is divided into several classes, and each class can generate a set of snapshot-based POD basis functions. In the second part, we construct the pre-classification based SROM. Considering the mapping relationship between the input and output of the system, we first use the clustering results to train a pre-classifier to provide predicted labels for the new inputs, and then use the CPOD basis functions associated with the labels to reduce their models. Here, the naive Bayes classifier \cite{Wickramasinghe2021naive,Zhang2009Feature} based on the principle of maximum posterior probability is adopted to establish the classification mechanism. We would like to point out that other classifiers,  such as  $k$-nearest neighbor \cite{Bradley1997The,Muja2014Scalabel},  decision trees \cite{quinlan1987simplifying},  support vector machine \cite{wang2005support}, etc, can also be combined with our CPOD basis functions without any difficulty. The main ideas of our method are shown in Figures \ref{F_framework1} and \ref{F_classifier_framework}. We call the method of combining CPOD basis functions and naive Bayes pre-classifier to construct SROM as the \emph{CPOD-NB method}.

The rest of this paper is organized as follows. In section \ref{Section_preliminaru}, we briefly introduce the traditional POD and CVT methods. In section \ref{Section_alg}, we describe in detail the modified t-gCVT for generating the CPOD basis functions and the naive Bayes method for pre-classification, then combine them to propose the CPOD-NB method for model reduction. The error estimation of the SROM based on CPOD-NB method and the strategy used for estimating the error rate of naive Bayes pre-classifier are given in section \ref{Section_error}. The stochastic Navier-Stokes equation we use as study background is presented in section \ref{Section_NS}. Numerical experiments are shown in section \ref{Section_Numerical}. Finally,  some conclusions are given in section \ref{Section_Conclusion}.

\section{Preliminary}  \label{Section_preliminaru}

We begin by some function spaces and notations needed, then briefly recall the classical POD and CVT methods related to this work.
        
Denote the system of stochastic partial differential equations (SPDEs) of unknown function $u$ as
\begin{equation}  \label{SPDE}
        F( u( t, \mathbf{x}, \bm{ \xi}) ; \bm{ \xi }  ) =0
        \qquad ( t, \mathbf{x} , \bm{ \xi})  \in [0,T] \times D \times \Gamma,
\end{equation}
where function $u$ has proper initial and boundary value conditions, $\mathbf{x}$ is the spatial variable, $t$ is the time variable and $\bm{ \xi}$ could be other parameters with image space $\Gamma$. Let  $L^2(D)$ be the set of square-integrable functions defined on domain $D$ with inner product $ \langle \cdot , \cdot \rangle$ and norm $\| \cdot \|_{L^2 (D)}$. We denote the space of all measurable functions $u: [0,T] \rightarrow L^2(D)$ by 
\begin{equation}
         \mathcal{L}^2 ( [0,T]; L^2(D) ) :=  \left\{  u: [0,T] \rightarrow L^2(D) \ \vert \ u \text{ measurable, }    \| u \|_{\mathcal{L}^2( [0,T]; L^2(D))} < \infty  \right\}
\end{equation}
where 
\begin{equation}  \label{norm}
        \| u \|_{\mathcal{L}^2( [0,T]; L^2(D))}=\left(  \int_0^T  \| u \|^2_{ L^2(D)} dt     \right)^{1/2}.
\end{equation}

\subsection{Proper orthogonal decomposition}

Given a positive integer $d$, for the system of SPDEs (\ref{SPDE}), the POD procedure is to find the orthonormal basis functions $\{ \phi_j (\mathbf{x}) \}_{j=1}^d$ that minimize error measure
\begin{equation}  \label{POD_error}
       \mathcal{E}^{ \text{POD}} ( \Pi^d ) 
       = \mathbb{E} \left[ \big\| u  -  \Pi^d u \big\|^2_{ \mathcal{L}^2([0,T]; L^2(D) ) } \right],
\end{equation}
where $\mathbb{E}[ \cdot]$ denotes expectation, $\Pi^d$ is a projection operator, and $\Pi^d u= \sum_{j=1}^d \langle u, \phi_j \rangle \phi_j$ represents the projection of $u$ onto the $d$-dimensional subspace spanned by $\{ \phi_j \}_{j=1}^d$. 
Note that the operator $\Pi^d$ and the basis function set $\{ \phi_j \}_{j=1}^d$ have a one-to-one correspondence, so without causing confusion, we can also denote $\Pi^d$ as the subspace spanned by these basis functions for the sake of simplicity, that is, $ \Pi^d = \text{span} \{ \phi_j \}_{j=1}^d $.
By the Lagrange multiplier method, the minimization problem is equivalent to
\begin{equation}  \label{POD_equiv_operator}
        \mathbb{E} \left[  \int_0^T \left\langle u,  \phi_j  \right\rangle u dt  \right]  = \lambda_j  \phi_j   \qquad  j=1, 2, \ldots, d,
\end{equation}
where $\left(  \{ \lambda_j \},  \{ \phi_j \} \right)$ is called the eigenpair of operator $ \mathbf{C}$ defined as
\begin{equation}  \label{operator_C1}
        \mathbf{C} \phi_j = \mathbb{E} \left[  \int_0^T \left\langle u,  \phi_j  \right\rangle u dt  \right]. 
\end{equation}

We use the MC method to estimate the expectation, and use snapshots obtained at discrete time points to calculate the time integral, then (\ref{operator_C1}) can be approximated as
\begin{equation}  \label{operator_C2}
       \mathbf{C} \phi_j =  \frac{\Delta t }{n} \sum_{i=1}^n  \sum_{k=1}^J   \left\langle  u(t_k, \mathbf{x}, \bm{\xi}_i),  \phi_j  \right\rangle  u(t_k, \mathbf{x}, \bm{\xi}_i),
\end{equation}
where $t_0=0$, $\Delta t = T/J$ and $t_k = t_{k-1}+ \Delta t$ for $k=1, 2, \ldots, J$. Denote the snapshot set as
\begin{align}  \label{snapshot_set}
        \mathcal{W}=& [v_1, \ldots, v_{nJ}]  \notag \\
        := & [u(t_1, \mathbf{x}, \bm{\xi}_1), \ldots, u(t_J, \mathbf{x}, \bm{\xi}_1), \ldots, u(t_1, \mathbf{x}, \bm{\xi}_n), \ldots, u(t_J, \mathbf{x}, \bm{\xi}_n)].  
\end{align}
Combining (\ref{POD_equiv_operator}) and (\ref{operator_C2}), the orthonormal POD basis functions can be represented as
\begin{equation} \label{POD_basis}
        \phi_j  =   \frac{ 1}{ \sqrt{nJ \sigma_j}}  \sum_{i=1}^{nJ} y_i^{(j)} v_i    \qquad   j=1, 2, \ldots,d.        
\end{equation}
Here, $\{ y_i^{(j)} \}$ and $\{ \sigma_j\}$ satisfy the following eigenvalue problem
\begin{equation}  \label{eigenvalue_pro}
        RY=Y \Lambda,
\end{equation}
where the components of matrices $R$ and $Y$ are defined as $R_{ij}= \frac{1}{nJ} \langle v_i, v_j \rangle$ and $Y_{ij}=y_i^{(j)}$ respectively, and $\Lambda =  \text{diag} (\sigma_1, \ldots, \sigma_{nJ}) $ with $\sigma_1 \geq  \sigma_2  \geq  \ldots \geq  \sigma_{nJ}  \geq 0$ and $\sigma_j = \frac{ \lambda_j}{T}$ for $j=1, 2, \ldots, nJ$.
Therefore, with snapshot set (\ref{snapshot_set}) and POD basis functions (\ref{POD_basis}), the minimum value of measure (\ref{POD_error}) can be approximated as
\begin{equation} \label{POD_energy}
       \mathcal{E}^{ \text{POD}} (\Pi^d) =  \sum_{j=d+1}^{nJ} \lambda_j   = T  \sum_{j=d+1}^{nJ}  \sigma_j,
\end{equation}
which is referred to as the ``POD energy''.

The above discussions of generating POD basis functions based on the MC method is summarized as follows: given time step $\Delta t$, use the MC method to sample an input set $X=\{ \bm{\xi}_i \}_{i=1}^n \subset \Gamma$, then the snapshot set $\mathcal{W}$ can be obtained by solving the system (\ref{SPDE}). Further, the POD basis functions $\{ \phi_j\}_{j=1}^d$ can be generated by solving the eigenvalue problem (\ref{eigenvalue_pro}), and the number of basis $d$ can be determined by the POD energy \cite{Kunisch2002Galerkin,Rathinam2003A}.

\subsection{Centroidal Voronoi tessellation}
        
Given a set of functions $ U=\{ v_i  \in L^2 (D)  \}_{i=1}^n $, the CVT of set $U$ is a special Voronoi tessellation with the centroids $\{ z^*_k  \in L^2(D) \}_{k=1}^K$ of Voronoi regions
\begin{equation} \label{Voronoi_rule}
        \mathcal{U}_k = \{ v \in U \ \vert \  \mathcal{D} (v, z_k) \leq \mathcal{D}( v, z_i )  \text{ for all } i \neq k \}    \quad k=1, 2,\ldots,K
\end{equation}
satisfying $z^*_k = z_k$ for $k=1, 2, \ldots,K$, where $\{ z_k  \in  L^2(D) \}_{k=1}^K$ are called the generators of  set $ \{ \mathcal{U}_k \}_{k=1}^K$, $K$ refers to the number of clusters, and the distance can be selected as any metric, for example the $L^2 (D)$ distance as $\mathcal{D}(v , z_k) = \| v-z_k \|_{ L^2(D)}$ \cite{du1999centroidal}.
When the distances between a point $v$ and two generators $z_i$, $z_j$ are same and the smallest, the principle of random assignment between these two classes is adopted.
According to the partition rule of CVT, we know that it minimizes the error measure
\begin{equation} \label{CVT_error}
        \mathcal{E}^{ \text{CVT}}  \left( \{ \mathcal{U}_k \}_{k=1}^K ; \{ z_k \}_{k=1}^K \right) = \sum_{k=1}^K \sum_{v \in  \mathcal{U}_k} \mathcal{D}^2 ( v, z_k ),
\end{equation}
and (\ref{CVT_error}) is referred to as the ``CVT energy''.

There are several methods that can be used to construct a CVT for a given data set \cite{du1999centroidal,Ju2002Probabilistic,Du2002Numerical,
Du2006Acceleration,Du2006Convergence,hamerly2015accelerating,Hateley2015Fast}. 
Among them, the most popular and simplest iterative-based Lloyd’s algorithm \cite{du1999centroidal} is used in this work.

\section{Pre-classification based SROM}  \label{Section_alg}

For a given SPDE, we first use the similarity and difference between sample solutions to cluster them, and each class can generate a set of POD basis functions. Then, a pre-classifier is trained by clustering results for assigning unlabelled input, and the corresponding model is reduced by the basis functions of the predicted class. In this section, we propose the t-gCVT clustering method for generating multiple sets of POD basis functions and the naive Bayes pre-classifier based SROM.

\subsection{Time-dependent generalized CVT}   
   
As mentioned above, the distance in CVT can be extended to other general distances. And from the measurement formula (\ref{POD_error}), we can see that the POD method is to find a subspace that minimizes the expected value of projection distance. Therefore, it is natural to consider combining the POD and CVT methods.

For a given solution $u$, in order to ensure that the basis functions of a subset after clustering can be used to generate its reduced-order approximation in the entire time domain, the time-dependent distance is defined as
\begin{equation}  \label{distance1}
       \widehat{ \mathcal{D}} (u, \Pi^d u)  =  \| u- \Pi^d u \|_{\mathcal{L}^2 ( [0,T]; L^2(D) ) }
\end{equation}
for any $d$-dimensional subspace $\Pi^d$.
Given a set of multidimensional subspaces $\{ \Pi^{d_k}_k \}_{k=1}^K$, $d_k \in \mathbb{N}^+$, for the solution $u $ of SPDE (\ref{SPDE}), the t-gVT is given as
\begin{equation}   \label{gVT1}
        \widehat{ \mathcal{U}}_k = \{ u \in U_s \ \vert  \widehat{ \mathcal{D}}^2  ( u, \Pi^{d_k}_k u ) \leq  \widehat{ \mathcal{D}}^2 ( u, \Pi^{d_i}_i u ) \text{ for all } i \neq k \}  \quad k=1, 2, \ldots,K,
\end{equation}
where $U_s$ denotes the solution space, which is composed of all functions $u$ satisfying system (\ref{SPDE}).
Similar to ($\ref{Voronoi_rule}$), the principle of random assignment in the appropriate classes is used to break the deadlock. It is well-known that the traditional CVT method clusters data by trying to separate samples into several classes that have the equal variance in the sense of the given distance. Therefore, the generalized centroid can be naturally defined as the subspace $\widehat{ \Pi}_k^{d_k}$ spanned by orthonormal basis functions, which minimizes
\begin{equation}  \label{gCVT_err1}
        \widehat{ \mathcal{E} }^{ \text{t-gCVT}}_k \left( \widehat{ \Pi}_k^{d_k} \right) 
        =  \mathbb{E}  \left[  \| u- \widehat{ \Pi}_k^{d_k} u \|^2_{\mathcal{L}^2 ( [0,T]; L^2(D) ) }  \right]   \quad   k=1, 2, \ldots, K,
\end{equation}
where $u \in \widehat{ \mathcal{U} }_k$ for $k=1, 2, \ldots, K$. Next, the t-gCVT is derived from the definition of CVT.
\begin{definition}  \label{def_tgCVT}
        The t-gVT $ ( \{ \widehat{ \mathcal{U}}_k \}_{k=1}^K; \{ \Pi_k^{d_k} \}_{k=1}^K  )$ of the solution space $U_s$ is called t-gCVT if and only if the generator $\Pi_k^{d_k}$ of class $ \widehat{ \mathcal{U}}_k$  is the corresponding generalized centroid,   i.e. $\Pi_k^{d_k} =  \widehat{ \Pi}_k^{d_k} $, for $k=1, 2, \ldots,K$.
\end{definition}

As can be seen from the above description, in the process of t-gCVT clustering, the calculation of distance (\ref{distance1}) involves time integral, and the construction of the generalized centroid is difficult because it is required to be optimal over the entire time domain in the sense of expectation. Therefore, the MC method with sample set $\widehat{U} = \{ u_i\}_{i=1}^n := \{ u (t, \mathbf{x}, \bm{\xi}_i) \}_{i=1}^n$ is used for the expectation, and the snapshots generated at equal time intervals are used to define a modified distance as
\begin{equation}  \label{distance2}
        \widetilde{ \mathcal{D}}^2 (u_i, \Pi^d u_i) 
        =  \sum_{j=1}^J \| u (t_j, \mathbf{x}, \bm{\xi}_i)- \Pi^d u(t_j, \mathbf{x}, \bm{\xi}_i ) \|^2_{L^2 (D)}    \quad  i=1, 2, \ldots ,n,
\end{equation}
where $\{ t_j \}_{j=1}^J$ are the corresponding time points of snapshots, $t_0=0$ and $t_{j+1} = t_j + \Delta t$ for $j=0, 1, \ldots, J-1$ with time interval  $\Delta t = T/J$. This is equivalent to using the snapshots to approximately calculate the integral with respect to time in distance (\ref{distance1}), and the scaling factor is $\Delta t$.        Then the modified t-gVT can be defined as
\begin{equation}  \label{gVT2}
        \widetilde{ \mathcal{U}}_k = \{ u \in \widehat{U} \ \vert \  \widetilde{ \mathcal{D}}^2  ( u, \Pi^{d_k}_k u ) \leq  \widetilde{ \mathcal{D}}^2 ( u, \Pi^{d_i}_i u ) \text{ for all } i \neq k \}  \quad k=1, 2, \ldots,K,
\end{equation}
and the modified generalized centroid $\widetilde{ \Pi}_k^{d_k} = \text{span} \{ \phi_1^k, \ldots, \phi_{d_k}^k \}$ minimizes
\begin{equation}  \label{gCVT_err2}
        \widetilde{ \mathcal{E} }^{ \text{t-gCVT}}_k \left( \widetilde{ \Pi}_k^{d_k} \right) 
        = \sum_{u \in \widetilde{ \mathcal{U}}_k}   \sum_{j=1}^J \| u (t_j, \mathbf{x}, \bm{\xi} )- \widetilde{ \Pi}_k^{d_k} u(t_j, \mathbf{x}, \bm{\xi} ) \|^2_{L^2 (D)}    \quad   k=1, 2, \ldots, K.
\end{equation}
Denote the cardinality of $\widetilde{ \mathcal{U}}_k$ as $n_k$, which satisfies $\sum_{k=1}^K n_k=n$.
According to the optimality of POD, for $k=1, 2, \ldots, K$, the modified generalized centroid $ \widetilde{ \Pi}_k^{d_k} $ is actually the subspace spanned by the POD basis functions, which are generated by the snapshots of set $ \widetilde{ \mathcal{U}}_k $.

If the approximate error of the time integral is negligible, that is,
\begin{equation}
       \widehat{D}^2(u, \Pi^d u) = \Delta t \widetilde{D}^2 (u, \Pi^d u)
\end{equation}
holds for any given subspace $\Pi^d$. Then the following inequality is known from the relationship between the minimum value of the expected value and the expectation of the minimum value
\begin{equation}  \label{min_relation}
        \min \mathbb{E} \left[  \widehat{D}^2(u, \Pi^d u) \right]  \geq   \Delta t \mathbb{E} \left[  \min \widetilde{D}^2 (u, \Pi^d u)   \right].
 \end{equation}
Therefore, $\{ \widehat{\mathcal{E}}^{ \text{t-gCVT}}_k \}_{k=1}^K$ and $\{ \widetilde{ \mathcal{E}}^{ \text{t-gCVT}}_k \}_{k=1}^K$ satisfy
\begin{equation}
        \min \widehat{\mathcal{E}}^{ \text{t-gCVT}}_k  \geq  \frac{\Delta t}{n_k} \min \widetilde{ \mathcal{E}}^{ \text{t-gCVT}}_k   \qquad   k=1, 2,  \ldots,K
\end{equation}
by using the MC method with $n_k$ samples of set $\widetilde{\mathcal{U}}_k$ to estimate the right-hand side of inequality (\ref{min_relation}).

Similar to Definition \ref{def_tgCVT}, the definition of modified t-gCVT is given as follows.
\begin{definition}  \label{def_tgCVT_modified}
         The modified t-gVT $ ( \{ \widetilde{ \mathcal{U}}_k \}_{k=1}^K; \{ \Pi_k^{d_k} \}_{k=1}^K  )$ of the set $\widehat{U}$ is called modified t-gCVT if and only if the generator $\Pi_k^{d_k}$ of set $ \widetilde{ \mathcal{U}}_k$  is the corresponding generalized centroid,   i.e. $\Pi_k^{d_k} =  \widetilde{ \Pi}_k^{d_k} $, for $k=1, 2, \ldots,K$. And the POD basis functions $  \{ \phi_j^{k}\}_{j=1}^{d_k} $ corresponding to the generalized centroid $\widetilde{ \Pi}_k^{d_k}$ of modified t-gCVT are called its subclass basis functions or  cluster-based POD (CPOD) basis functions.
\end{definition}

It can be seen from the above definition that the modified t-gCVT of set $\widehat{U}$ minimizes the error
\begin{equation}  \label{gCVT_err_total}
       \widetilde{ \mathcal{E} }^{ \text{t-gCVT}} = \sum_{k=1}^K \widetilde{ \mathcal{E} }^{ \text{t-gCVT}}_k \left( \widetilde{ \Pi}_k^{d_k} \right),
\end{equation}
and the minimum value is
\begin{equation}  \label{gCVT_err_MinimalValue}
       \widetilde{ \mathcal{E} }^{ \text{t-gCVT}}= \sum_{k=1}^K   J n_k \sum_{j=d_k+1}^{ J n_k} \sigma_j^k,
\end{equation}
where $ \{ \sigma^k_j \}_{j=1}^{J n_k}$ are the eigenvalues of correlation matrix $R$ associated with set $\widetilde{\mathcal{U}}_k$, as difined in (\ref{eigenvalue_pro}).
Here, (\ref{gCVT_err_MinimalValue}) is referred to as ``modified t-gCVT energy'', and 
\begin{equation}
        \nu_k = \sum_{j=1}^{ d_k} \sigma_j^k   \bigg/  \sum_{j=1}^{ J n_k} \sigma_j^k  \qquad  k=1, 2,  \ldots, K
\end{equation}
is called the energy ratio of CPOD basis functions $\{ \phi_j^k \}_{j=1}^{d_k}$.

To reduce the complexity of model construction, the modified t-gCVT is used in the following processes, and its structure is shown in Figure \ref{F_framework1}. Note that the modified t-gCVT is reduced to the standard snapshot-based POD method when $K=1$, and the number of CPOD basis functions $\{ \phi_j^k\}_{j=1}^{d_k} $ is not neccessarily equal for $k=1, 2,  \ldots,K$.
\begin{figure}[H]
       \centering   
       \includegraphics[width=0.6 \textwidth,trim=160 80 170 190,clip]{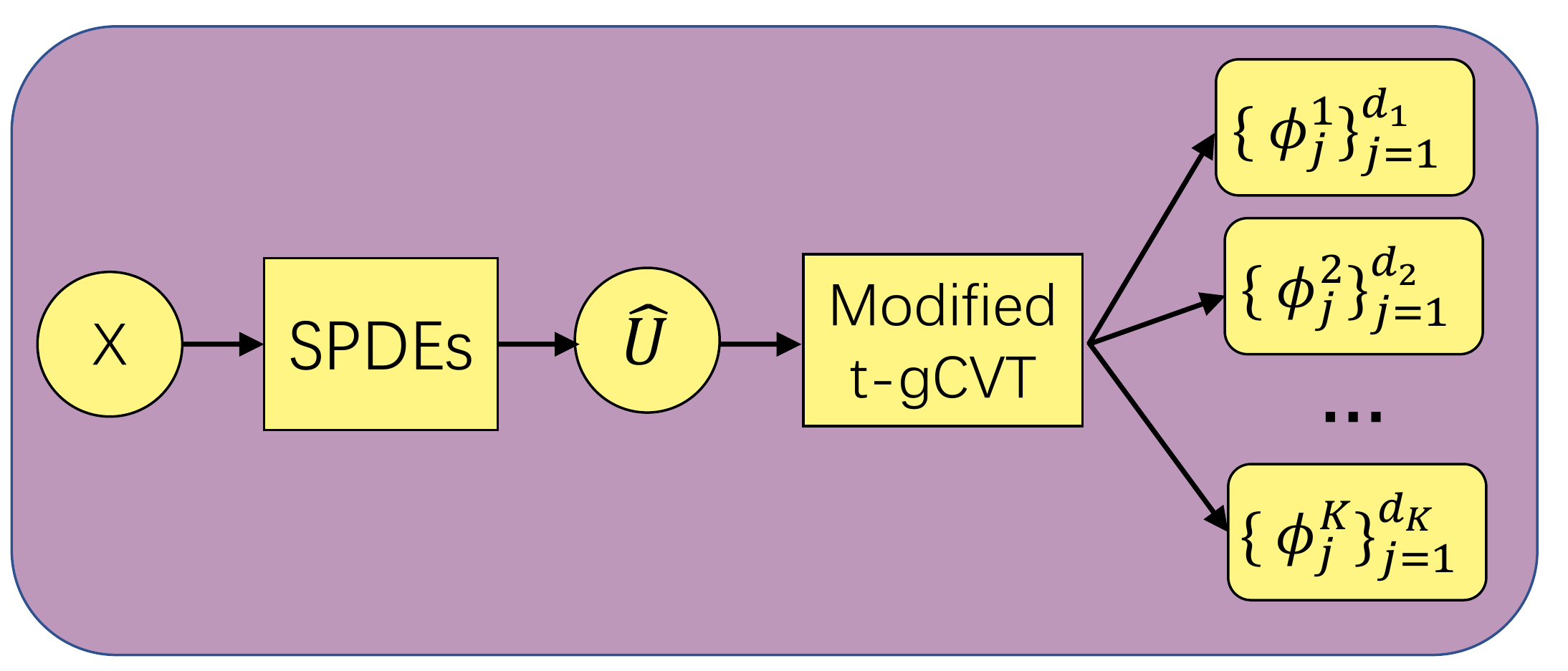}
       \caption{The framework of modified t-gCVT method   }
        \label{F_framework1} 
\end{figure}

\begin{remark}  \label{remark_1}
       When the modified t-gCVT $( \{ \widetilde{ \mathcal{U} }_k \}_{k=1}^K ; \{ \widetilde{ \Pi}_k^{d_k} \}_{k=1}^K )$ of set $\widehat{ U}$ is known, we can naturally cluster the inputs $\{ \bm{\xi}_i \}_{i=1}^n$ according to the clustering results of data $\widehat{U}$. Namely, the image space $\Gamma $ of input $\bm{\xi}$ can be divided into $\{ \Gamma_k \}_{k=1}^K$, which satisfies  $\Gamma_i \cap \Gamma_j =  \emptyset$ if $i \neq j$, $\Gamma_k \subset \Gamma $ for $k =1, 2,   \ldots, K$ and $\bigcup_{k=1}^K \Gamma_k =\Gamma$. If $u (t, \mathbf{x}, \bm{\xi} ) \in \widetilde{ \mathcal{U}}_k$, then the corresponding input, $ \bm{\xi}  \in \Gamma$, is belonging to $\Gamma_k$, i.e.,
       \begin{equation}  \label{image_space}
                 \Gamma_k = \{  \bm{ \xi } \in \Gamma  \ \vert \  u (t, \mathbf{x}, \bm{\xi} ) \in  \widetilde{ \mathcal{U} }_k \}   \qquad   k=1, 2,  \ldots,K,
        \end{equation}
        where $k$ is called the class label of $\bm{\xi}$.
\end{remark}

The details of using the modified t-gCVT method to generate the CPOD basis functions are given in Algorithm~\ref{alg_tgCVT}.     
\begin{algorithm}[ht]
        \renewcommand{\algorithmicrequire}{\textbf{Input:}}
        \renewcommand{\algorithmicensure}{\textbf{Output:}}
        \caption{The modified t-gCVT clustering method for generating CPOD basis functions}  \label{alg_tgCVT}                
        \begin{algorithmic}[1] 
                \REQUIRE  set $\widehat{U} = \{ u(t,\mathbf{x}, \bm{\xi}_i ) \}_{i=1}^n$, a positive integer $K$, dimensions $\{ d_k\}_{k=1}^K$, step size $\Delta t$.
                  \ENSURE modified t-gCVT $( \{ \widetilde{\mathcal{U}}_k \}_{k=1}^K ; \{ \widetilde{\Pi}_k^{d_k} \}_{k=1}^K )$ of set $\widehat{U}$, and $K$ groups of CPOD basis functions $ \{ \{ \phi_j^1\}_{j=1}^{d_1},  \ldots,  \{ \phi_j^K\}_{j=1}^{d_K} \} $.
                \STATE Select a set of initial generalized generators $\{ \Pi_k^{d_k} \}_{k=1}^K$ with dimensions $ \{ d_k \}_{k=1}^K$.
                \STATE Construct the modified t-gVT $ \{ \widetilde{ \mathcal{U}}_k \}_{k=1}^K $  of $\widehat{U}$ associated with  $\{ \Pi_k^{d_k} \}_{k=1}^K$.
                \STATE From $ \{ \widetilde{ \mathcal{U}}_k \}_{k=1}^K $ and step size $\Delta t$, determine the snapshot sets  $\{ \mathcal{W}_k \}_{k=1}^K$ defined in (\ref{snapshot_set}).
                 \STATE Generate $K$ groups CPOD basis functions $ \{ \{ \phi_j^1\}_{j=1}^{d_1},  \ldots,  \{ \phi_j^K\}_{j=1}^{d_K} \} $ defined in (\ref{POD_basis}) by solving eigenvalue problems (\ref{eigenvalue_pro}) associated with $\{ \mathcal{W}_k\}_{k=1}^K$.
                  \STATE For $k=1, 2, \ldots, K$, let $\widetilde{ \Pi }_k^{d_k} = \text{span} (\phi_1^k, \ldots, \phi_{d_k}^k)$, if $ \Pi_k^{d_k}=\widetilde{ \Pi }_k^{d_k} $, stop; otherwise, let $ \Pi_k^{d_k}=\widetilde{ \Pi }_k^{d_k} $ and return to step 2.
        \end{algorithmic}
\end{algorithm}

\subsection{Naive Bayes pre-classifier and pre-classification based SROM}  
 
Since the modified t-gCVT method is to cluster the spatio-temporal function $u$, then for a given $u$, a set of suitable CPOD basis functions can be used to calculate its reduced-order approximation in the whole time interval. In modified t-gCVT, the set $\widetilde{\mathcal{U}}_k$ with the highest similarity to the function $u$ is called its best-matched set, and the corresponding CPOD basis functions are called the best-matched basis functions. In general, the reduced-order approximation generated by the best-matched basis functions is better than the standard POD approximation with the same degree of freedom (DoF). This is because that the samples in the same class are similar after clustering, then the same number of basis functions can capture more useful information, which is beneficial for the reconstruction of function $u$. That is to say, if we know the best-matched basis functions of a given function, the accuracy of its reduced-order approximation can be improved compared with the standard POD method. Note that the spatio-temporal function $u (t, \mathbf{x}, \bm{ \xi})$ is determined by the random input $\bm{ \xi}$, and our aim is to construct a SROM such that the approximate solution can be obtained for any given input $\bm{ \xi}$. Therefore, a pre-classifier is constructed here to select the best-matched basis functions from the perspective of random input.       
        
In this paper, the naive Bayes pre-classifier based on Bayes' theorem and the assumption of feature condition independence is adopted. For a given integer $K \geq 1$, the image space $\Gamma$ is divided into disjoint subspace set $\{ \Gamma_k \}_{k=1}^K$ as introduced in Remark \ref{remark_1}. Suppose $\bm{ \gamma }$ is a random vector defined on the input space $\Gamma \subset \mathbb{R}^p$ composed of $p$-dimensional vectors. Its realization, also known as the feature vector, is denoted as $ \bm{ \xi }=[\xi_1, \ldots, \xi_p ]^\top \in \Gamma$. Let $\iota$ be a random variable defined on the class label set $\mathcal{L} = \{ 1, \ldots, K\}$. Its realization, also known as class label, is denoted as $k \in \mathcal{L}$. Let $X = \{ \bm{ \xi}_i \}_{i=1}^n$ be the independent and identically distributed (i.i.d.) input set of the given data $\widehat{U}$, and $\{ \iota_i \}_{i=1}^n$ be the corresponding class labels obtained by the modified t-gCVT method, then the training data set is given as        
\begin{equation}
        \mathbb{D} = \left\{ ( \bm{\xi}_1, \iota_1), \ldots,  ( \bm{\xi}_n, \iota_n) \right\}.
\end{equation}
Denote the prior probability distributions
\begin{equation}  \label{prior}
       \mathbb{P} ( \iota =k) = \pi_k   \qquad   k=1, 2, \ldots, K,
\end{equation}
and conditional probability distributions
\begin{equation}  \label{condition_prob}
       \mathbb{P} ( \bm{ \gamma}=\bm{ \xi} \vert \iota =k) 
       = \prod_{i=1}^p   \mathbb{P} ( \gamma_i= \xi_i \vert \iota =k)  
       = f_k ( \bm{ \xi} )   \qquad   k=1, 2, \ldots,K
\end{equation}
as
\begin{equation}
       \pi_k = \frac{n_k}{n}  \qquad k=1, 2, \ldots, K
\end{equation}
and 
\begin{equation}
       f_k ( \bm{\xi } ) 
       =  \prod_{i=1}^p  f_k ( \xi_i ) 
       = \prod_{i=1}^p \frac{1}{ \sqrt{2 \pi} \sigma_i^{ (k) }} \exp \left(  - \frac{ \vert \xi_i - \mu_{i}^{(k)} \vert ^2 }{ 2 ( \sigma_i^{(k)})^2} \right)   \quad  k=1, 2, \ldots,K,
\end{equation}
respectively. 
Here, the means $ \{ \mu_i^{ (k) } \}$ and variances $ \{ \sigma_i^{ (k) } \}$ can be estimated by
\begin{align}
        \mu_i^{ (k)} &= \frac{1}{n_k}  \sum_{ \bm{\xi} \in \Gamma_k } \xi_i    \qquad i=1, 2, \ldots,p,  \  k=1, 2, \ldots, K,  \\
        \sigma_i^{ (k) } &= \frac{1}{ n_k - 1 } \sum_{ \bm{ \xi } \in \Gamma_k } \left( \xi_i - \mu_i^{ (k)} \right)^2   \qquad i=1, 2, \ldots,p, \  k=1, 2, \ldots, K.
\end{align}
According to the Bayes' theorem, the posterior probability has form
\begin{equation}   \label{posterior}
        \mathbb{P} ( \iota=k  \vert  \bm{\gamma}= \bm{\xi} ) = \frac{  \pi_k f_k ( \bm{ \xi})  }{ \sum_{k=1}^K   \pi_k f_k ( \bm{ \xi}) }.     
\end{equation} 
        
The principle of naive Bayes pre-classifier is to assign input to the class with the largest posterior probability, that is, input $\bm{ \xi}$ is assigned to the subspace $\Gamma_k$ if
\begin{equation}  \label{classifier1}
         k = \mathop{ \arg \max}_{1 \leq i \leq K} \mathbb{P} ( \iota =i  \vert \bm{\gamma}= \bm{\xi} ).
\end{equation}
The denominator of (\ref{posterior}) is a fixed constant for a given $\bm{\xi}$,  so (\ref{classifier1}) is equivalent to
\begin{equation}   \label{classifier2}
        k = \mathop{ \arg \max}_{1 \leq i \leq K} \pi_i f_i ( \bm{ \xi}).
\end{equation}
If the result in (\ref{classifier2}) is not unique, a random assignment is used to break the tie. Here, $k$ is the predicted label of input $\bm{\xi}$, and the corresponding $\widetilde{\mathcal{U}}_k$ and $\{ \phi_j^k\}_{j=1}^{d_k}$ are called the predicted best-matched set and predicted best-matched basis functions of solution $u( t, \mathbf{x}, \bm{\xi})$, respectively.    

The naive Bayes classifier is based on the assumption of normality and independence of variables, which will affect the accuracy of the algorithm to a certain extent. But this algorithm is easy to implement and has high learning and prediction efficiency. Therefore, it is still one of the popular classification tools.

When the naive Bayes pre-classifier assigns an unlabelled input $\bm{ \xi }$ to the subspace $\Gamma_k$, that is to say, the probability of $\bm{\xi} \in \Gamma_k$ is the largest, then the continuity of the input-output mapping shows that its solution $u$ is most likely to belong to the set $ \widetilde{ \mathcal{U} }_k $. Therefore, it is feasible to use $k$-th group CPOD basis functions $\{ \phi_j^k \}_{j=1}^{d_k}$ of modified t-gCVT to evaluate the corresponding model, and the approximation of solution $u$ is given by
\begin{equation}   \label{gCVT_sol}
        \widetilde{u}^K ( t , \mathbf{x}, \bm{ \xi} ) = \sum_{j=1}^{d_k} \alpha_j ( t, \bm{ \xi} ) \phi_j^{k} ( \mathbf{x}),
\end{equation}       
where $\{ \alpha_j \}_{j=1}^{d_k}$ can be obtained by solving the following reduced system
\begin{equation}   \label{ROM}
         \left\langle  F \left(  \sum_{j=1}^{d_k} \alpha_j ( t , \bm{\xi})   \phi_j^k ( \mathbf{x} )   ; \bm{ \xi }  \right) , \phi_i^k ( \mathbf{x} )  \right\rangle = 0  \qquad  i=1, 2,  \ldots, d_k.
\end{equation}

We call the method of combining CPOD basis functions and naive Bayes pre-classifier to construct SROM as the \emph{CPOD-NB method}, and $\widetilde{u}^K$ defined in (\ref{gCVT_sol}) is the CPOD-NB reduced-order approximation of solution $u$ with the number of clusters $K$.  The structure of the model reduction based on CPOD-NB method is shown in Figure \ref{F_classifier_framework}.
\begin{figure}[ht]
       \centering   
       \includegraphics[width=0.7 \textwidth,trim=160 80 60 190,clip]{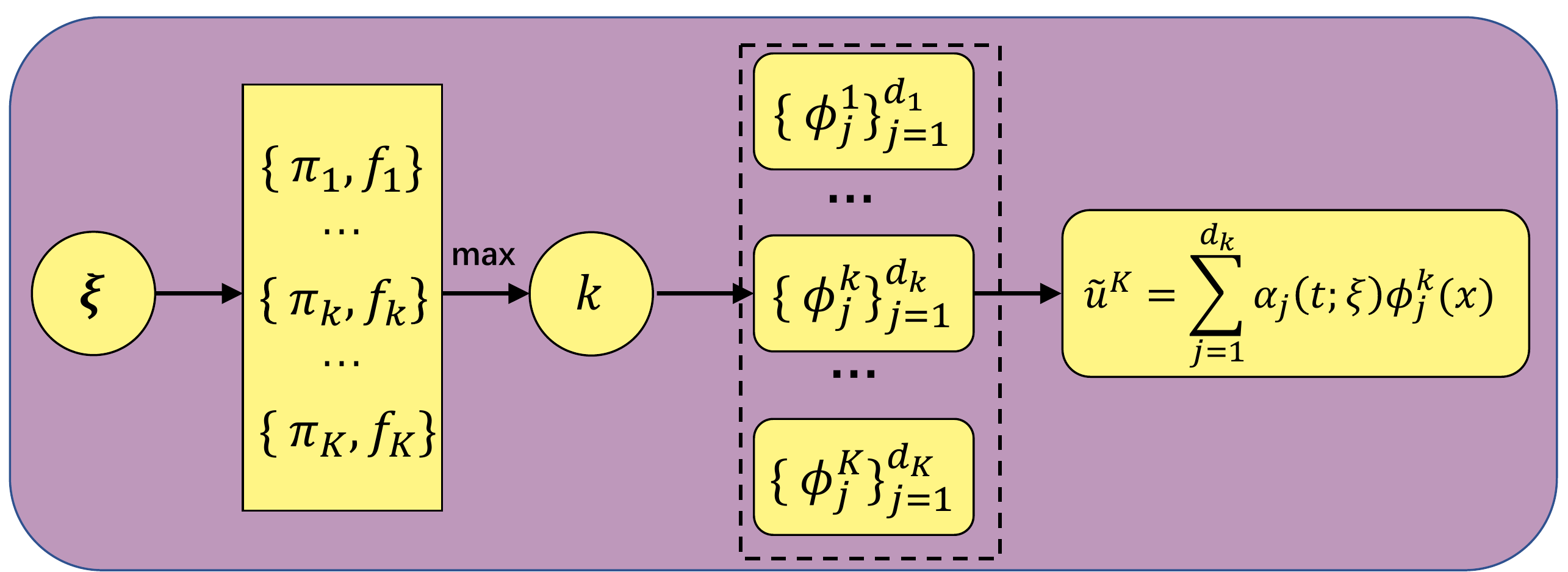}
       \caption{The framework of model reduction based on CPOD-NB method}    \label{F_classifier_framework}
\end{figure}
        
So far, the modified t-gCVT method and pre-classification based SROM have been introduced, and the details of CPOD-NB method for model reduction are described in Algorithm \ref{alg_CPOD}.
\begin{algorithm}[ht]
       \renewcommand{\algorithmicrequire}{\textbf{Input:}}
       \renewcommand{\algorithmicensure}{\textbf{Output:}}
       \caption{SROM based on CPOD-NB method}  \label{alg_CPOD}                
       \begin{algorithmic}[1] 
                 \REQUIRE input set $X=\{ \bm{ \xi}_i \}_{i=1}^n$, a positive integer $K$, dimensions $ \{ d_k \}_{k=1}^K$, step size $\Delta t$.
                 \ENSURE CPOD-NB approximate solution $\widetilde{ u}^K $ of new input $\bm{ \xi}$.
                 \STATE Generate data set $\widehat{U}$ by solving system (\ref{SPDE}) with inputs $X$.
                 \STATE Obtain the modified t-gCVT $ ( \{ \widetilde{\mathcal{U}}_k \}_{k=1}^K ; \{ \widetilde{\Pi}_k^{d_k} \}_{k=1}^K )$ of $\widehat{U}$ and  $K$ groups of CPOD basis functions $ \{ \{ \phi_j^1\}_{j=1}^{d_1},  \ldots,  \{ \phi_j^K\}_{j=1}^{d_K} \} $ by using Algirithm~\ref{alg_tgCVT}.
                 \STATE For $i=1, 2, \ldots,n$, if $u(t,\mathbf{x}, \bm{\xi}_i) \in \widetilde{ \mathcal{U}}_k$, then denote the label of $\bm{\xi}_i$ as $\iota_i=k$, where $k\in \{ 1, 2, \ldots, K\}$.
                 \STATE Use the input set $X$ and the labels $\{ \iota_i\}_{i=1}^n$ to form the training data set $\mathbb{D}$, then learn the prior probability distributions $\{ \pi_k \}_{k=1}^K$ and conditional probability density functions $\{ f_k \}_{k=1}^K$.
                 \STATE For a given new input $\bm{ \xi}$, compute the values of $ \{  \pi_k, f_k (\bm{ \xi}) \}_{k=1}^K$, then assign $\bm{ \xi}$ to $\Gamma_k $ if (\ref{classifier2}) holds.
                 \STATE Obtain the reduced states $\{ \alpha_j \}_{j=1}^{d_k}$ by solving the system (\ref{ROM}) with $k$-th group CPOD basis functions $\{ \phi_j^k \}_{j=1}^{d_k}$, then the CPOD-NB approximate solution $\widetilde{u}^K$ of $\bm{ \xi}$ can be represented as (\ref{gCVT_sol}).
        \end{algorithmic}
\end{algorithm}        
        
\begin{remark}
       For a given input $\bm{\xi}$, in the CPOD-NB method, we hope to find the set of CPOD basis functions such that the error between its finite element solution and the reduced-order solution is the smallest. Therefore, the true label of input $\bm{\xi}$ can be defined as
        \begin{equation}   \label{true_label}
                i = \mathop{ \arg\min}_{ 1 \leq k \leq K }   \bigg\| u(t,\mathbf{x}, \bm{\xi}) - \widetilde{\Pi}_k^{d_k} u(t,\mathbf{x}, \bm{\xi})  \bigg\|^2_{\mathcal{L}^2([0,T]; L^2 (D)) },    
        \end{equation} 
        and the corresponding $\widetilde{\mathcal{U}}_i$ and $\{ \phi_j^i\}_{j=1}^{d_i}$ are called the true best-matched set and true best-matched basis functions of solution $u(t,\mathbf{x}, \bm{\xi})$, respectively.
\end{remark}

\section{Error estimation}    \label{Section_error}

In this section, we first give the error estimation of the SROM based on CPOD-NB method, and then introduce the estimation method of error rate of the naive Bayes pre-classifier.     
        
\subsection{Error estimation of CPOD-NB based SROM} 

In order to characterize the validity of the CPOD-NB model, the error between the full finite element solution $u$ and the CPOD-NB approximate solution $\widetilde{u}^K $ is defined as
\begin{equation}  \label{error_E_definition}
        \widetilde{\mathcal{E} }_K = \mathbb{E} \left[  \| u - \widetilde{ u}^K  \|^2_{ \mathcal{L}^2( [0,T]; L^2(D) ) }  \right] 
\end{equation}
and
\begin{equation}   \label{error_V_definition}
       \widetilde{ \mathcal{V} }_K = \mathbb{V}  \left[  \| u - \widetilde{ u}^K  \|^2_{ \mathcal{L}^2( [0,T]; L^2(D) ) }  \right] ,
\end{equation}
where  $\mathbb{V}  [\cdot] $ represents the variance.       
        
The error estimation of the CPOD-NB reduced-order solution is given in following theorem.
\begin{theorem}
         In the naive Bayes pre-classifier, if the random input $\bm{\xi}$ can always get the true label with the maximum posterior probability, then there exist  constants $C_1, C_2 >0$, such that with probability close to one, the space-time $L^2(D)$-norm error $\widetilde{ \mathcal{E}}_K$ between the finite element solution $u$ and the CPOD-NB approximate solution $\widetilde{ u}^K$ satisfies
        \begin{equation}
                 \widetilde{\mathcal{E} }_K  \leq  \sum_{k=1}^K  \left(  \frac{T n_k }{n}   \sum_{j=d_k+1}^{J n_k}  \sigma^k_j  \right)  +  C_1  \sqrt{   \widetilde{ \mathcal{V}}_K /n } + C_2 \frac{T \Delta t }{2},
        \end{equation}
        where $C_2$ depends on the regularity of $\| u(t) - \widetilde{u}^K (t) \|^2_{L^2(D)}$, while constant $C_1$ is universal.
\end{theorem}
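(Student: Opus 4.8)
The plan is to decompose the error $\widetilde{\mathcal{E}}_K$ into three pieces: a clustering/projection term, a Monte Carlo estimation term, and a time-discretization term, and to bound each separately. Under the hypothesis that the naive Bayes pre-classifier always returns the true label (as in \eqref{true_label}), every realization $u(t,\mathbf{x},\bm{\xi})$ is approximated using its \emph{true} best-matched basis functions $\{\phi_j^k\}_{j=1}^{d_k}$, so the reduced-order solution $\widetilde{u}^K$ coincides (up to the Galerkin residual) with the orthogonal projection $\widetilde{\Pi}_k^{d_k}u$ onto the optimal CPOD subspace for the class $k$ to which $\bm{\xi}$ belongs. First I would write
\begin{equation}
        \widetilde{\mathcal{E}}_K = \mathbb{E}\left[ \| u - \widetilde{u}^K \|^2_{\mathcal{L}^2([0,T];L^2(D))} \right]
        = \sum_{k=1}^K \frac{n_k}{n}\, \mathbb{E}\left[ \| u - \widetilde{\Pi}_k^{d_k} u \|^2_{\mathcal{L}^2([0,T];L^2(D))} \,\big|\, \bm{\xi}\in\Gamma_k \right] + (\text{Galerkin term}),
\end{equation}
using the law of total expectation with weights $\pi_k = n_k/n$; the Galerkin term is controlled by the best-approximation (Céa-type) argument and is absorbed into the projection error up to a constant, which is standard for POD-Galerkin and need not be ground out here.

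Next I would handle the continuous-to-discrete passage in the norm. The exact quantity involves the time integral $\int_0^T \|u(t)-\widetilde{u}^K(t)\|^2_{L^2(D)}\,dt$, whereas the CPOD basis and the energy $\sum_{j=d_k+1}^{Jn_k}\sigma_j^k$ are built from snapshots at the discrete points $\{t_j\}_{j=1}^J$. Writing $g(t) := \|u(t)-\widetilde{u}^K(t)\|^2_{L^2(D)}$, the difference between $\int_0^T g(t)\,dt$ and $\Delta t\sum_{j=1}^J g(t_j)$ is a quadrature error; assuming $g$ has enough regularity in $t$ (this is exactly the hypothesis attached to $C_2$), the rectangle/midpoint rule gives an error bounded by $C_2 \, T\,\Delta t/2$ with $C_2$ depending on $\sup_t |g'(t)|$ or $\sup_t|g''(t)|$. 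This yields the last term $C_2 T\Delta t/2$. The discrete sum $\Delta t \sum_{j=1}^J \mathbb{E}[g(t_j)\mid \bm{\xi}\in\Gamma_k]$, once the MC weights are folded in, is precisely the per-class modified t-gCVT energy divided by $Jn_k$ and scaled by $T$ (since $\Delta t = T/J$ and $\sigma_j^k = \lambda_j^k/T$), giving the main term $\sum_k \frac{Tn_k}{n}\sum_{j=d_k+1}^{Jn_k}\sigma_j^k$ via \eqref{gCVT_err_MinimalValue}.

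Then I would treat the replacement of the true expectation $\mathbb{E}[\cdot]$ by its Monte Carlo estimate over the $n$ samples. Since $\widetilde{\mathcal{E}}_K$ is an expectation of the random variable $\|u-\widetilde{u}^K\|^2$ whose variance is $\widetilde{\mathcal{V}}_K$ by \eqref{error_V_definition}, the central limit theorem (or Chebyshev for the ``probability close to one'' phrasing) gives that the sample mean deviates from $\widetilde{\mathcal{E}}_K$ by at most $C_1\sqrt{\widetilde{\mathcal{V}}_K/n}$ with probability close to one, where $C_1$ is a universal constant (a quantile of the standard normal, or $1/\sqrt{\delta}$ for confidence $1-\delta$). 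Collecting the three bounds gives the claimed inequality. The main obstacle I anticipate is making the first step fully rigorous: one must verify that ``pre-classifier returns the true label'' genuinely forces $\widetilde{u}^K = \widetilde{\Pi}_k^{d_k}u$ up to a controllable Galerkin defect, and that the conditioning on $\{\bm{\xi}\in\Gamma_k\}$ interacts correctly with both the MC average and the quadrature error — i.e. that the three error sources can be separated additively rather than entangled. The quadrature estimate itself and the MC concentration bound are routine; the bookkeeping that keeps the per-class decomposition consistent with the definition of $\widetilde{\mathcal{E}}^{\text{t-gCVT}}$ is where care is needed.
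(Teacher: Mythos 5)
Your proposal follows essentially the same route as the paper's proof: the error is split into the per-cluster t-gCVT projection energy (bounded via (\ref{gCVT_err_MinimalValue})), a Monte Carlo statistical term controlled by the central limit theorem with a universal quantile constant $C_1$, and a quadrature residual of size $C_2\,T\Delta t/2$ from replacing the time integral by the snapshot sum. The only substantive difference is that you explicitly flag the need to control the gap between the Galerkin reduced solution $\widetilde{u}^K$ and the orthogonal projection $\widetilde{\Pi}_k^{d_k}u$ — a point the paper's proof silently elides by bounding the snapshot sum directly by the t-gCVT energy — so your version is, if anything, the more careful one.
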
             
        
\begin{proof}
       By using the MC method, the error can be estimated by
       \begin{equation*}
                \widetilde{ \mathcal{E}}_K = \frac{1}{n} \sum_{i=1}^n \|  u (t,\mathbf{x}, \bm{\xi}_i )- \widetilde{u}^K (t,\mathbf{x}, \bm{\xi}_i ) \|^2_{ \mathcal{L}^2( [0,T]; L^2(D) ) } + \widetilde{ \mathcal{E}}_s,
       \end{equation*}
       where  $\widetilde{ \mathcal{E}}_s$ denotes statistical error and satisfies 
        \begin{equation*}
                \widetilde{ \mathcal{E}}_s  \sim  N (0, \widetilde{ \mathcal{V}}_K / n)
        \end{equation*}
        according to the central limit theorem. For a constant $C_1 \geq 1.65$, the inequality
        \begin{equation*}
                    \vert  \widetilde{ \mathcal{E}}_s \vert  \leq  C_1  \sqrt{ \widetilde{ \mathcal{V}}_K / n }
        \end{equation*}
        can hold with probability close to 1.  Then using data $\widehat{U}$ and its clustering results, the following can be obtained
        \begin{align*}
                   \widetilde{ \mathcal{E}}_K - \widetilde{ \mathcal{E}}_s = \frac{1}{n}  \sum_{k=1}^K   \sum_{u \in \widetilde{ \mathcal{U}}_k}  \|  u - \widetilde{u}^K  \|^2_{ \mathcal{L}^2( [0,T]; L^2(D) ) }.
        \end{align*}
        The snapshots obtained at equal time intervals are used to approximate the time integral, that is
        \begin{equation*}
                 \|  u - \widetilde{u}^K  \|^2_{ \mathcal{L}^2( [0,T]; L^2(D) ) }  =  \Delta t  \sum_{j=1}^{J}   \|  u (t_j) - \widetilde{u}^K (t_j)   \|^2_{ L^2(D) } + R[u,\widetilde{u}^K],
        \end{equation*}
        where time step $\Delta t = T/J$, $t_0=0$ and $t_{j+1} = t_j + \Delta t$ for $j=0, 1, \ldots, J-1$. $R[u,\widetilde{u}^K]$ is the residual of the approximation which depends on the regularity of $f(t;u) := \|  u (t) - \widetilde{u}^K (t)   \|^2_{ L^2(D) }$ and satisfies
         \begin{equation*}
                R[u,\widetilde{u}^K] = \frac{T \Delta t}{2} f^{\prime} (\eta;u)
         \end{equation*}
         for some $\eta \in (0,T)$. Therefore, 
         \begin{equation*}
                 \widetilde{ \mathcal{E}}_K - \widetilde{ \mathcal{E}}_s  =    \frac{ \Delta t}{n} \sum_{k=1}^K   \sum_{u \in \widetilde{ \mathcal{U}}_k}   \sum_{j=1}^{J}   \|  u (t_j) - \widetilde{u}^K (t_j)   \|^2_{ L^2(D) }    
                 + \frac{1}{n}  \sum_{k=1}^K   \sum_{u \in \widetilde{ \mathcal{U}}_k}  \frac{T \Delta t}{2} f^{\prime} (\eta;u).
         \end{equation*}
         Let 
         \begin{equation*}
                 C_2 =  \max_{\substack{u \in \widehat{U} , \eta \in (0,T)} }  \vert f^{\prime} (\eta;u) \vert, 
         \end{equation*}
         then according to the energy (\ref{gCVT_err_MinimalValue})
         \begin{equation*}
                 \widetilde{ \mathcal{E}}_K - \widetilde{ \mathcal{E}}_s  \leq   \sum_{k=1}^K  \left(  \frac{J n_k  \Delta t}{n}   \sum_{j=d_k+1}^{J n_k}  \sigma^k_j  \right) + C_2 \frac{T \Delta t}{2}
          \end{equation*}
          holds, which completes the proof.
\end{proof}

\subsection{Error rate estimation of the naive Bayes pre-classifier}

In general, classification rules have their error rate.  When the Bayes classifier with the maximum posterior decision rule is used to classify the problem with known conditional probability density functions and prior probability distributions, its error rate should be fixed.  Next, we consider the error rate estimation of the naive Bayesian pre-classifier.
        
According to the statistical decision theory \cite{berger1985statistical}, denote the discriminant functions as
\begin{equation}   \label{discriminant}
        g_k ( \bm{ \xi} )  =  \pi_k  f_k( \bm{\xi} )    \qquad  k=1, 2, \ldots , K,
\end{equation}         
and their decision regions are defined by
\begin{equation}   \label{decision_region}
       \widetilde{ \Gamma}_k  =  \{ \bm{ \xi} \in \Gamma  \  \vert \  g_k ( \bm{ \xi} ) >  g_i ( \bm{ \xi} )  \text{ for } i=1, 2,  \ldots, K,  \  i  \neq  k \}  \qquad  k=1, 2, \ldots, K.
\end{equation}
Then the decision surface between regions $\widetilde{ \Gamma}_i$ and $\widetilde{ \Gamma}_j$ is given as
\begin{equation}   \label{decision_surface}
       \mathcal{S}_{ij} = \{ \bm{\xi} \in \Gamma \ \vert \ g_i ( \bm{\xi} ) = g_j ( \bm{\xi} ), \ i \neq j \}  \qquad  i,j  = 1, 2, \ldots, K.
\end{equation}
Note that the decision region set $\{ \widetilde{ \Gamma}_k \}_{k=1}^K$ is also a partition of the feature space $\Gamma$. Although we hope that it is consistent with the segmentation $\{ \Gamma_k \}_{k=1}^K$  in the modified t-gCVT so that the input samples can always be assigned  to the best subspace with the maximum posterior probability, it is difficult to achieve in practice due to the defects of the classifier itself and the lack of data.  Therefore, it is necessary to study the error rate of classifier.        
        
According to the classification rules of naive Bayes, its error rate $ \mathbb{P} (e)$ is the probability of assigning sample that belongs to subspace $\Gamma_k$ to other subspace $\Gamma_i$, where $i,k = 1, 2, \ldots, K$ and $i \neq k$. That is 
\begin{equation}   \label{error_rate1}
        \mathbb{P} (e)  
        =  \sum_{k=1}^K  \sum_{\substack{i=1\\ i\neq k} }^K \mathbb{P}  ( \bm{\xi} \in \widetilde{\Gamma}_i ,  \iota =k )
        =  \sum_{k=1}^K  \sum_{\substack{i=1\\ i\neq k} }^K  \mathbb{P}  ( \bm{\xi} \in \widetilde{\Gamma}_i  \vert  \iota =k )   \mathbb{P} (\iota =k)
        =  \sum_{k=1}^K  \sum_{\substack{i=1\\ i\neq k} }^K  \pi_k  \mathbb{P}_{ki} (e),
\end{equation}
where 
\begin{equation}   \label{error_rate_ki}
        \mathbb{P}_{ki} (e) 
        = \mathbb{P}  ( \bm{\xi} \in \widetilde{\Gamma}_i  \vert  \iota =k )  
        = \int_{\widetilde{\Gamma}_i}  \mathbb{P} (\bm{ \gamma} = \bm{\xi} \vert  \iota =k) d \bm{\xi}
        = \int_{\widetilde{\Gamma}_i}  f_k (\bm{\xi})  d \bm{\xi}.
\end{equation}
Then the correct rate of the classifier takes the form
\begin{equation}   \label{correct_rate}
        \mathbb{P} (c)  = 1- \mathbb{P}(e) 
        = \sum_{k=1}^K  \pi_k  \mathbb{P}_{kk} (e).
\end{equation}        
 
For high-dimensional stochastic problems, it is difficult to determine the decision regions $\{ \widetilde{\Gamma}_k \}$ and the decision surfaces $\{ S_{ij} \}$, so the calculation of integrals (\ref{error_rate_ki}) is a huge challenge. Here, a more practical method can be used to estimate the error rate for testing the performance of the classifier.

A test set $\mathbb{T}=\{ \bm{\xi}_i \}_{i=1}^N$ with size $N$ is randomly selected from the feature space $\Gamma$, and its components are mutually independent  and independent of the training data $\mathbb{D}$.
Let the total number of samples in $\Gamma_k$ be $N_k$ for $k=1, 2,  \ldots, K$, which satisfy $\sum_{k=1}^K N_k =N$. The number of samples belonging to subspace $\Gamma_k$ that are misjudged into subspace $\Gamma_i$ is denoted as $n_{ki}$ for $k,i=1, 2,  \ldots,K$ and $i \neq k$.
Obviously, $n_{ki}$ is a discrete random variable that obeys a binomial distribution and satisfies 
\begin{equation}   \label{binomial}
        \mathbb{P} (n_{ki}) = C_{N_k}^{n_{ki}}  \left[\mathbb{P}_{ki} (e) \right]^{n_{ki}}  \left[ 1- \mathbb{P}_{ki} (e) \right]^{N_k - n_{ki}} ,
\end{equation}
where $C_{N_k}^{n_{ki}} = \frac{N_k !}{n_{ki}! \left( N_k - n_{ki} \right)! }$.
By solving 
\begin{equation}   \label{solve_saddle}
        \frac{ \partial \ln \mathbb{P} (n_{ki})}{ \partial \mathbb{P}_{ki} (e)}  = 0
\end{equation}
can obtain the maximum likelihood estimation of $\mathbb{P}_{ki} (e)$ as
\begin{equation}   \label{MLE}
        \widehat{ \mathbb{P}}_{ki} (e) = \frac{ n_{ki} }{ N_k },
\end{equation}
which is also a random variable, and the mean has form
\begin{equation}   \label{P_E}
        \mathbb{E} \left[  \widehat{ \mathbb{P}}_{ki} (e) \right] 
        = \frac{ \mathbb{E} \left[ n_{ki} \right] }{N_k} 
        = \mathbb{P}_{ki} (e).
\end{equation}
Therefore, $\widehat{ \mathbb{P}}_{ki} (e)$ is an unbiased estimate of $ \mathbb{P}_{ki} (e)$, and further an unbiased estimate of $\mathbb{P} (e)$ can be obtained as
\begin{equation}   \label{error_rate2}
        \widehat{ \mathbb{P}} (e)  = \sum_{k=1}^K  \sum_{\substack{i=1\\ i\neq k} }^K  \pi_k  \widehat{\mathbb{P}}_{ki} (e).
\end{equation}
In numerical experiments of this work, we use formula (\ref{error_rate2}) to estimate the error rate of the naive Bayes pre-classifier.

\section{Stochastic Navier-Stokes equations}    \label{Section_NS}   

In this work, we use the proposed CPOD-NB based SROM to deal with stochastic flow over a backward-facing step \cite{gunzburger2011optimal} described as
\begin{align}
         \mathbf{u}_t -\frac{1}{Re} \Delta \mathbf{u} + ( \mathbf{u} \cdot \nabla ) \mathbf{u} + \nabla P &= 0  \qquad (0,T] \times D,  \label{NS_1} \\
         \nabla \cdot \mathbf{u} &= 0   \qquad (0,T] \times D,    \label{NS_2}
\end{align}
where $Re$ is the Reynolds number of the fluid, $ \mathbf{u} ( t, \mathbf{x}) = (u_1, u_2)^ \top $ and $P ( t, \mathbf{x})$ denote the velocity and pressure fields, respectively. The boundary of physical domain $D$ is denoted by $ \partial D $, which consists of six parts as depicted in Figure \ref{F_domain_D}. For $ t \in(0,T]$, the boundary conditions are given by 
\begin{align}
        \mathbf{u} &= (u_{ \text{in}}, 0)^ \top    \hspace{25pt}  \text{on } \partial D_{i},  \label{D_in} \\
        \mathbf{u} &= (0,0)^ \top            \quad \qquad   \text{on } \partial D_t \cup \partial D_b \cup \partial D_d \cup \partial D_c , \label{D_m}\\
        P \mathbf{n} - \frac{1}{Re} \frac{\partial \mathbf{u} }{ \partial \mathbf{n}} &= (0,0)^ \top      \quad \qquad   \text{on } \partial D_{o}, \label{D_out} 
\end{align}
and the initial velocity field satisfies
\begin{equation}  \label{u_0}
        \mathbf{u} (0,\mathbf{x}) = \mathbf{u}_0 (x,y) = \left\{
        \begin{array}{ll}
                u_{ \text{in}} (0,\mathbf{x})  & \text{ on } \partial D_{i},  \\
                0  & \text{ otherwise}.
        \end{array}
        \right.
\end{equation}
        
Assume that the fluid can be injected along $\partial D_{i}$, so $u_{ \text{in}} \geq 0$ is required. Further assume that the injected fluid contains uncertainties. Thus, for a properly defined probability space $(\Omega, \mathcal{F}, \mathbb{P})$, $u_{ \text{in} }$ can be  modelled with random variable $\omega \in \Omega$ as
\begin{equation}   \label{u_in}
        u_{ \text{in}} = A (t, \bm{\xi} ( \omega)) h(y),
\end{equation}
where $A$ is a time-dependent parameter that determines the strength of the parabolic inflow velocity profile $h(y)$.  For the sake of simplicity, denote $A (t, \bm{\xi} ( \omega))$ as $A (t, \bm{\xi} )$ or $A (t; \omega)$.
        
\begin{figure}[ht]
       \centering   
       \includegraphics[width=0.7 \textwidth,trim=0 0 0 0,clip]{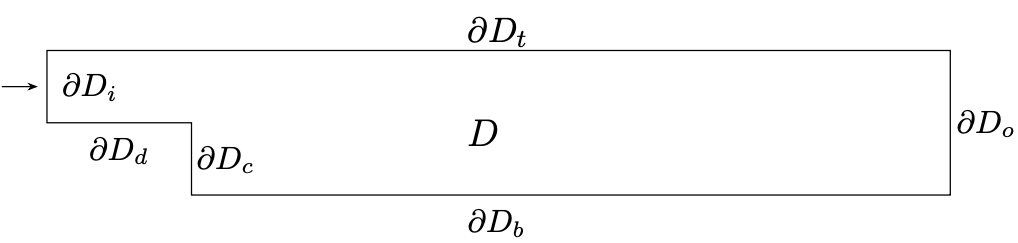}
      \caption{Physical domain $D$ of Navier-Stokes equation} 
\label{F_domain_D}
\end{figure}

\subsection{Full discrete and Newton linearization}

In this paper, finite element method, $\theta$-scheme and Newton's method are used for spatial discretization, time discretization, and the linearization of nonlinear convective term, respectively.
      
Let $\mathcal{T}^h$ be a shape-regular triangular finite element mesh of domain $D$, which is parameterized by mesh width $h=\max_{G \in \mathcal{T}^h} \text{diam} (G)$, where $G$ is a typical finite element in the triangulation $\mathcal{T}^h$.  The finite element mesh used in this work is shown in Figure \ref{F_mesh}. For vector valued function $\mathbf{u}$,  define the following finite element spaces 
\begin{align*}
         & \mathbf{V}^h =\{ \mathbf{v}^h = (v^h_1, v^h_2)^ \top: v^h_i \in C^0 (\bar{D}), v^h_i \vert _G \in \mathcal{P}^2 \text{ for any } G \in \mathcal{T}^h, i=1,2 \},  \\
         & \mathbf{V}^h_0 =\{ \mathbf{v}^h \in \mathbf{V}^h:  v^h_i =0  \text{ on } \partial D  \setminus \partial D_{o}   \text{ for }  i=1,2 \},   \\
         & Q^h = \{ q^h: q^h \in C^0 (\bar{D}), q^h \vert _G \in  \mathcal{P}^1  \text{ for any } G \in \mathcal{T}^h\},
\end{align*}
where $\mathcal{P}^r$ denotes the polynomial space with degree less than or equal to $r$, $r \in \mathbb{N}^+$. The Taylor-Hood finite element spaces are considered in our computation, i.e. quadratic finite element space for velocity field $\mathbf{u}$ and linear finite element space for pressure field $ P$.

Let $ \tau_m = \{ t_i\}_{i=0}^m $ be a partition of $ [0,T]$ with equal interval $ \delta t =T/m$, where $ t_0 =0$ and $t_i =t_{i-1} + \delta t$ for $ i=1, 2,  \ldots,m$. Then for $ i=0, 1, \ldots , m-1$, the linearized full discrete weak formulation of system (\ref{NS_1})-(\ref{D_out}) is given as:  find $\mathbf{u}^{i+1}_{\theta} \in \mathbf{V}^h$ and $P^{i+1}_{\theta} \in Q^h$ such that 
\begin{align}  \label{weak_formulation_theta}
        \frac{1}{\theta \Delta t}  \int_D \left( \mathbf{u}^{i+1}_{\theta} -  \mathbf{u}^i  \right) \mathbf{v}  d \mathbf{x}  
        &+ \frac{1}{Re} \int_D \nabla  \mathbf{u}^{i+1}_{\theta} : \nabla \mathbf{v} d \mathbf{x}  
        + \int_D \mathbf{u}^{i+1}_{\theta} \cdot \nabla \mathbf{u}^i \cdot \mathbf{v} d \mathbf{x}     \notag  \\
        &+ \int_D \mathbf{u}^i \cdot \nabla \mathbf{u}^{i+1}_{\theta} \cdot \mathbf{v} d \mathbf{x}  
        - \int_D P^{i+1}_{\theta} \nabla \cdot \mathbf{v} d \mathbf{x}
        = \int_D \mathbf{u}^i \cdot \nabla \mathbf{u}^i \cdot \mathbf{v} d \mathbf{x}  \\
        \int_D q \nabla \cdot \mathbf{u}^{i+1}_{\theta} d \mathbf{x} & = \left\{ 
        \begin{array}{ll}
               0                                                 & \text{ if } i>0, \\
               (1-\theta) \int_D q \nabla \cdot \mathbf{u}^0 d \mathbf{x}   & \text{ if } i=0,   \notag \\
        \end{array}
        \right.
\end{align}
for any test functions $\mathbf{v}^h \in \mathbf{V}^h_0$ and $q^h \in Q^h$.  Here, $ \mathbf{u}^i = \mathbf{u} (t_i, \mathbf{x})$ and $\theta$ is taken as $\frac{1}{2}$. By solving linear system (\ref{weak_formulation_theta}), the pair $( \mathbf{u}^{i+1}, P^{i+1})$ can be recovered from
\begin{equation}
         \mathbf{u}^{i+1} = 2 \mathbf{u}^{i+1}_{\theta} -  \mathbf{u}^i
         \qquad   \text{ and }   \qquad
         P^{i+1} =2 P^{i+1}_{\theta} -  P^i.
\end{equation}      

\begin{figure}[h]
        \centering 
        \includegraphics[width=0.6 \textwidth,trim=0 137 0 150,clip]{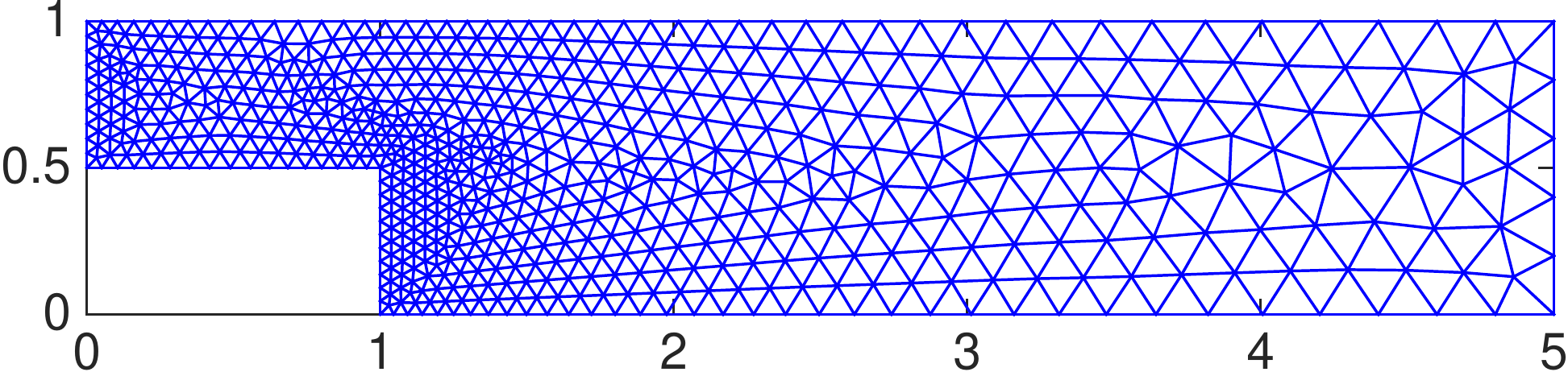}
        \caption{Finite element mesh with $h=0.2$, 1279 triangles and 703 vertices   }  \label{F_mesh}
\end{figure}

\subsection{Modified velocity field}

Here, instead of the original finite element solution $\mathbf{u}$, a CPOD-NB model is constructed for the modified velocity field with homogeneous Dirichlet boundaries.
        
Denote the solutions of the steady-state version of Navier-Stokes system (\ref{NS_1})-(\ref{D_out}) with constant strengths $A=a_1$ and $A=a_2$ in inflow velocity $u_{\text{in}}$ as $\mathbf{u}_{a_1}$ and $\mathbf{u}_{a_2}$, respectively.  Let
\begin{equation}   \label{st_NS}
        \mathbf{w}=  \frac{\mathbf{u}_{a_1} - \mathbf{u}_{a_2} }{a_1 -a_2},
\end{equation}
and denote the average of the velocity field as 
\begin{equation}   \label{average_u}
        \overline{ \mathbf{u} } ( \mathbf{x}) = \frac{1}{n} \sum_{i=1}^n \left( \frac{1}{J} \sum_{j=1}^J \left( \mathbf{u} ( t_j, \mathbf{x}, \bm{\xi}_i ) - A( t_j, \bm{\xi}_i) \mathbf{w}( \mathbf{x}) \right) \right).
\end{equation}
Then the modified state is given by
\begin{equation}   \label{modified_state}
        \mathbf{v} ( t, \mathbf{x}, \bm{\xi} ) = \mathbf{u} ( t, \mathbf{x}, \bm{\xi} ) - \overline{ \mathbf{u} } ( \mathbf{x})  - A(t, \bm{ \xi}) \mathbf{w} ( \mathbf{x} ), 
\end{equation}
which satisfies $\mathbf{v}=0$ on $ \partial D \setminus \partial D_o$.        
        
Using the modified t-gCVT method for modified state $\mathbf{v}$, we can obtain $K$ sets of basis functions $\{  \{ \phi_j^1 ( \mathbf{x}) \}_{j=1}^{d_1} ,  \ldots, \{ \phi_j^K ( \mathbf{x}) \}_{j=1}^{d_K} \}$.  If the class label of a given input $\bm{\xi}$ is $k$, the original system (\ref{weak_formulation_theta}) can be reduced to a $d_k$-dimensional ordinary differential equations by using $\{ \phi_j^k ( \mathbf{x}) \}_{j=1}^{d_k}$, then the reduced states $\{ \alpha_j (t, \bm{\xi}) \}_{j=1}^{d_k}$ can be calculated by Runge-Kutta method, finally the approximation of the original velocity field can be represented as
\begin{equation}
        \mathbf{u} ( t, \mathbf{x}, \bm{\xi} ) = \overline{ \mathbf{u} } ( \mathbf{x}) + A(t, \bm{ \xi}) \mathbf{w} ( \mathbf{x} ) + \sum_{j=1}^{d_k}  \alpha_j (t, \bm{\xi})  \phi_j^k ( \mathbf{x}).
\end{equation}

\section{Numerical experiments}   \label{Section_Numerical}

To illustrate the feasibility and effectiveness of the proposed CPOD-NB model, we provide comparisons with the standard POD method (i.e. $K=1$). All computations were performed using MATLAB R2017a on a personal computer with 2.3 GHz CPU and 256 GB RAM.                
        
In our computation, the physical domain $D$ and its triangulation used in the finite element method are shown in the Figure \ref{F_mesh}. 
The Reynolds number $Re$ is taken as 500.
The finite element solutions of steady-state version of Navier-Stokes system associated with $a_1 = 2$ and $a_2=1$ are used to generate the modified state, as defined in (\ref{modified_state}).
The time interval $ [0, T] $, $T=2$, is divided by the time step $ \delta t = 1/200 $, and the modified snapshots are obtained at each time point for computing the modified distance, i.e. $\Delta t = \delta t$.
The parabolic profile $h(y)$ of inflow velocity has form
\begin{equation}   \label{h_profile}
         h(y) =  (1-y)(y-0.5).   
\end{equation}
Let the random input of system (\ref{NS_1})-(\ref{u_0}) be the time-discrete form of strength $A(t;\omega )$, i.e.
\begin{equation}
         \bm{ \xi} (\omega) = [ \xi_1( \omega), \ldots, \xi_{m+1} (\omega)]^ \top = [A(t_0; \omega ),  A(t_1; \omega ), \ldots, A(t_m; \omega )]^ \top,
\end{equation}
where $t_0=0$, $t_j = t_{j-1} + \delta t$ for $j=1, 2, \ldots, m$.
The number of CPOD basis functions of each class is not necessarily equal in our method, but in order to compare with the standard POD method, it is set to be equal and determined by the 97\% cumulative energy ratio of the standard POD basis functions.                 
        
In addition to estimating absolute error statistics $\widetilde{ \mathcal{E}}_K$ and $\widetilde{ \mathcal{V}}_K$, we also give the estimations of relative error statistics defined as
\begin{equation}   \label{relateive_error_E}
       \widetilde{\mathcal{E} }_K^r = \mathbb{E} \left[  \frac{\| u - \widetilde{ u}^K  \|^2_{ \mathcal{L}^2( [0,T]; L^2(D) ) }}{ \| u \|^2_{ \mathcal{L}^2( [0,T]; L^2(D) ) } }  \right] 
\end{equation}
and 
\begin{equation}   \label{relateive_error_V}
       \widetilde{ \mathcal{V} }_K^r = \mathbb{V}  \left[  \frac{\| u - \widetilde{ u}^K  \|^2_{ \mathcal{L}^2( [0,T]; L^2(D) ) }}{ \| u \|^2_{ \mathcal{L}^2( [0,T]; L^2(D) ) } }  \right] .
\end{equation}  
These statistics are all estimated by the MC method.
Next, we consider two different strengths $A$, one is expanded by the trigonometric functions, and the other is hat-type functions of different heights with white noise.

\subsection{Strength expanded by trigonometric functions}   \label{sec_eg1}

In this experiment, the strength $A$ is given by
\begin{equation}     \label{inlet_trigonometric}
        A (t; \omega )= A_0 (t) + \sigma \sum_{i=1}^N \delta_i \left[  \sin(\pi i t) \eta_i^{(1)}(\omega)  + \cos( \pi i t) \eta_i^{(2)}(\omega)  \right],
\end{equation}
where the mean strength $A_0(t) \equiv 70$, amplification factor $\sigma = 12$, the number of expanded terms $N=100$, $\delta_i = 1/i$ for $i=1, 2, \ldots, N$, and $\{ \eta_i^{(j)} \}_{i=1}^{N}$, $j=1,2$, are i.i.d. random variables and satisfy $\eta_i^{(j)} \sim \mathcal{N} (0,1)$.
Here, 300 samples of velocity field are used to generate the CPOD basis functions and train the naive Bayes pre-classifier, and the other 100 samples form the test set to estimate the error of the SROM based on the pre-classifier.         
        
\subsubsection{Generating CPOD basis functions}

Figure~\ref{F_population1} shows the clustering results of these 300 samples with modified t-gCVT method. On the left is the number of samples in each class, $n_k$, under different cluster numbers $K$. The middle is the corresponding energy defined in (\ref{gCVT_err_MinimalValue}), which gradually decreases with the increase of $K$. On the right is the logarithm of eigenvalues corresponding to the first 30 CPOD basis functions in each class. The dimensions and cumulative energy ratios used in this experiment are given in Table \ref{T_dim1}. On the whole, for $K=2$ and 3, the energy ratios of the CPOD basis functions generated by our method are higher than that of the standard POD method. It is not difficult to understand that the samples in each class are similar after clustering, so their eigenvalues  decay faster, which leads to the same number of basis functions can obtain more information. That is to say, some information that is ignored by standard POD method can be captured after clustering. The contours of the first four CPOD basis functions in every class are given in Figure \ref{F_PODbasis1}.
Note that the first basis functions of these six cases are similar because they all describe the main characteristics of the velocity field, but the remaining basis functions of $K=2$ and $K=3$ have obvious differences, which shows that the clustering method can capture the local characteristics of the flow.

\begin{figure}[ht]
        \centering 
        \includegraphics[width=0.30 \textwidth,trim=35 20 45 20,clip]{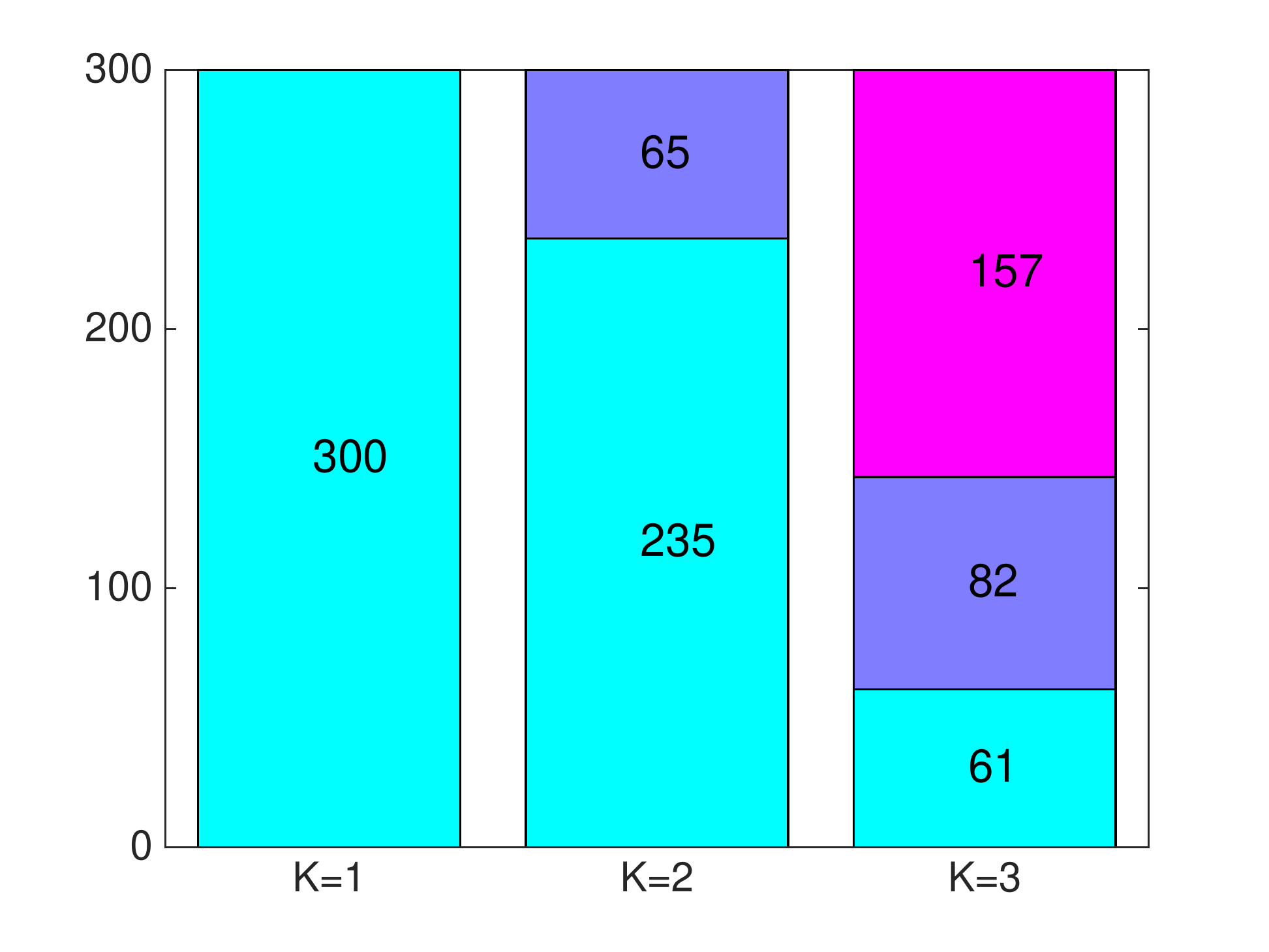}  
        \includegraphics[width=0.30 \textwidth,trim=20 20 45 20,clip]{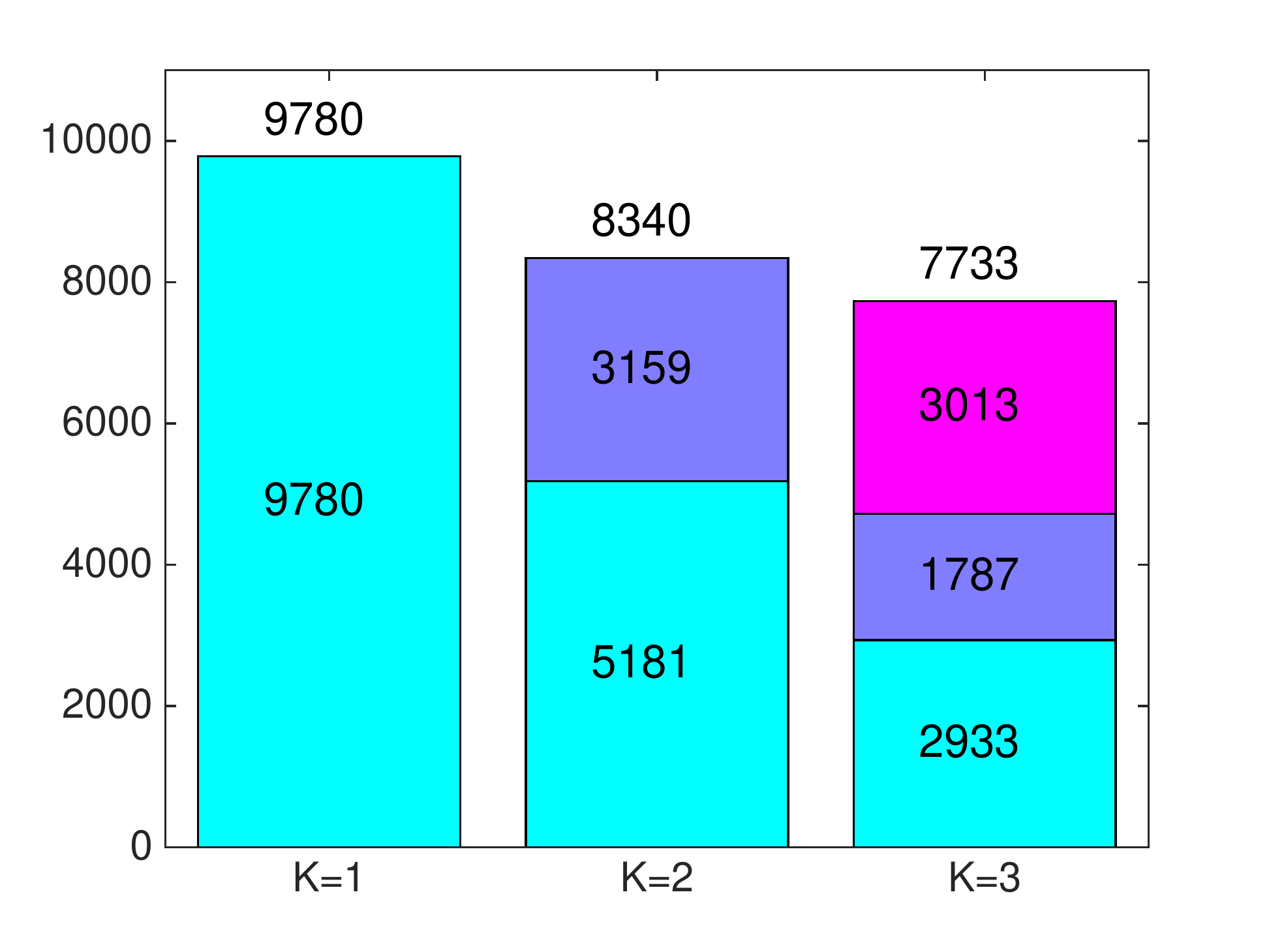}
        \quad
        \begin{rotate}{90}
                \hspace{35pt}   \footnotesize $ \log \lambda $
        \end{rotate}
        \includegraphics[width=0.30 \textwidth,trim=28 20 45 20,clip]{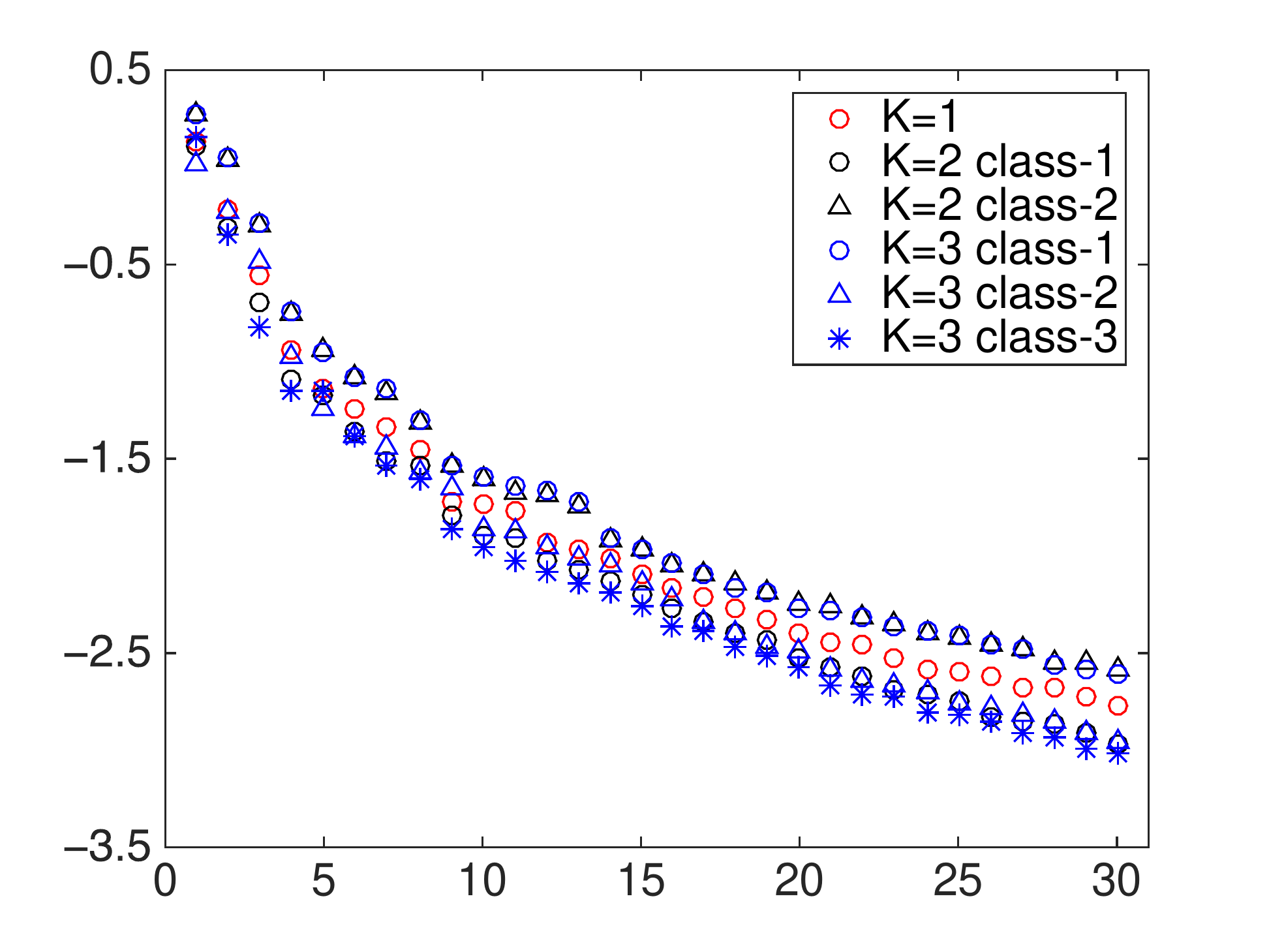}
        \caption{Population $n_k$ (left),  energy $\widetilde{\mathcal{E}}^{  \text{t-gCVT} }$ (middle) of data $\widehat{U}$, and the logarithm of eigenvalues (right) corresponding to the first 30 CPOD basis functions in each class for $K=1,2 $ and $3$   }  \label{F_population1}
\end{figure}
        
\begin{table}[ht]
      \centering
      \small
      \begin{spacing}{1}
      \caption{The dimension $d_k$ and cumulative energy ratio $\nu_k$ of CPOD basis functions in each class for $K=1, 2$ and $3$ }  \label{T_dim1}
      \begin{tabular}{@{}ccccccc@{}}
             \hline
             &  $ K=1 $
             &  \multicolumn{2}{c}{ $ K=2 $ }
             &  \multicolumn{3}{c}{ $ K=3 $ }    \\
             \hline
             class & - & 1 & 2 & 1 & 2 & 3  \\
             $d_k$ & 16 & 16 & 16 & 16 & 16 & 16 \\
             $\nu_k$ & 0.9704 & 0.9765  & 0.9713 & 0.9719  &  0.9768  &  0.9798  \\
             \hline
     \end{tabular}
     \end{spacing}
\end{table}        
        
\begin{figure}[H]
      \centering 
      \includegraphics[width=0.30 \textwidth,trim=50 30 50 0,clip]{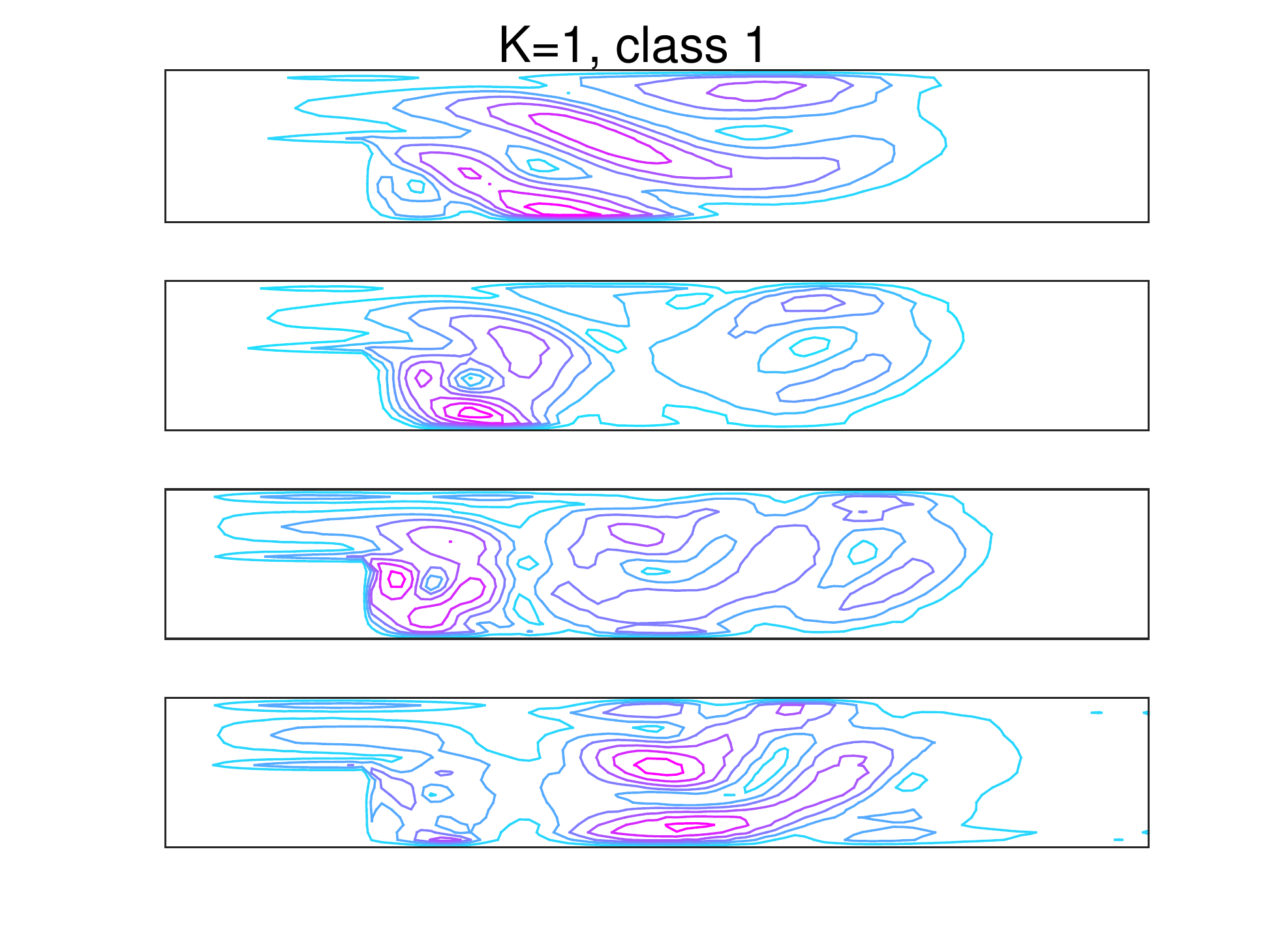}  
      \includegraphics[width=0.30 \textwidth,trim=50 30 50 0,clip]{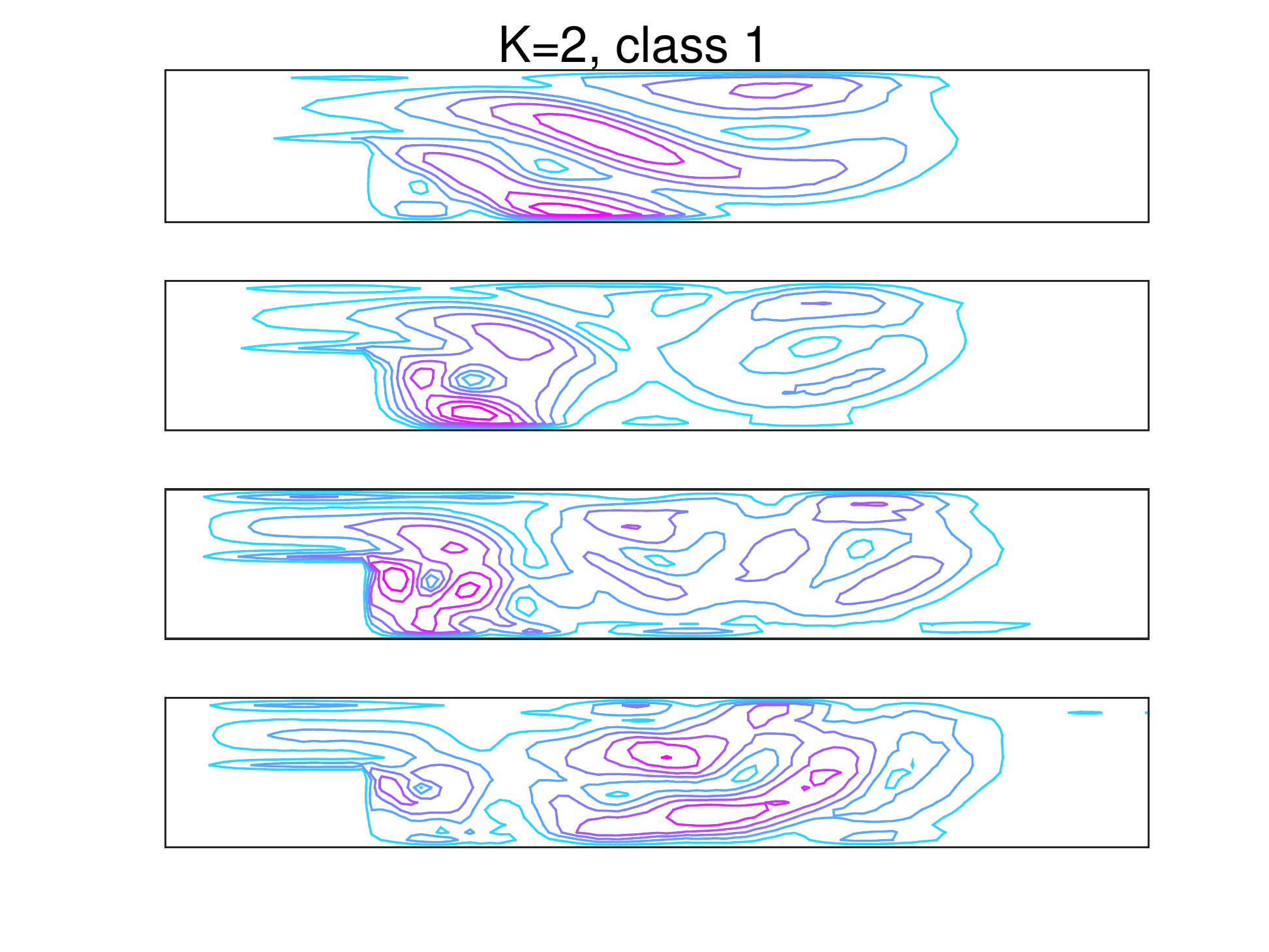} 
      \includegraphics[width=0.30 \textwidth,trim=50 30 50 0,clip]{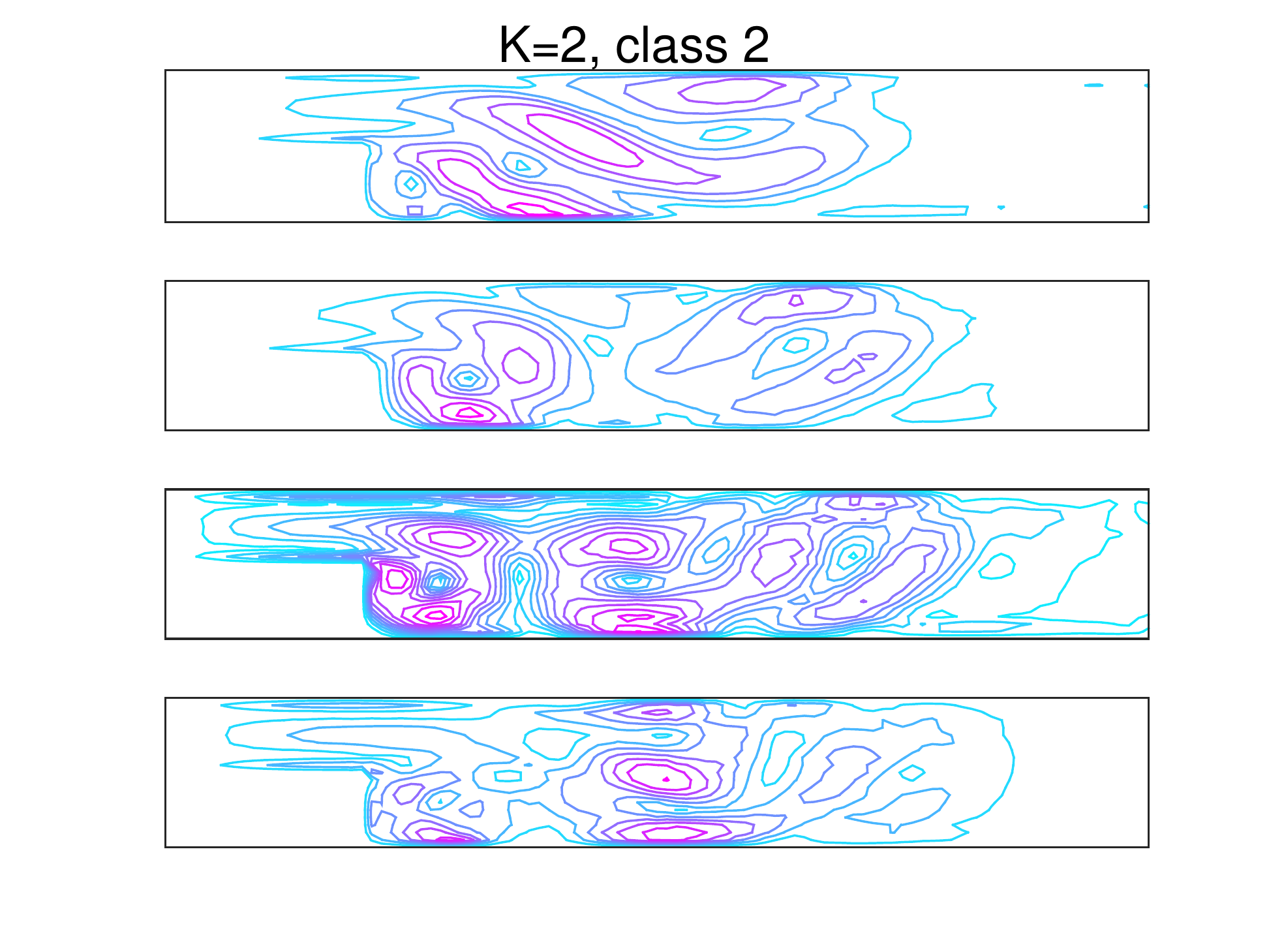} \\
      \includegraphics[width=0.30 \textwidth,trim=50 30 50 0,clip]{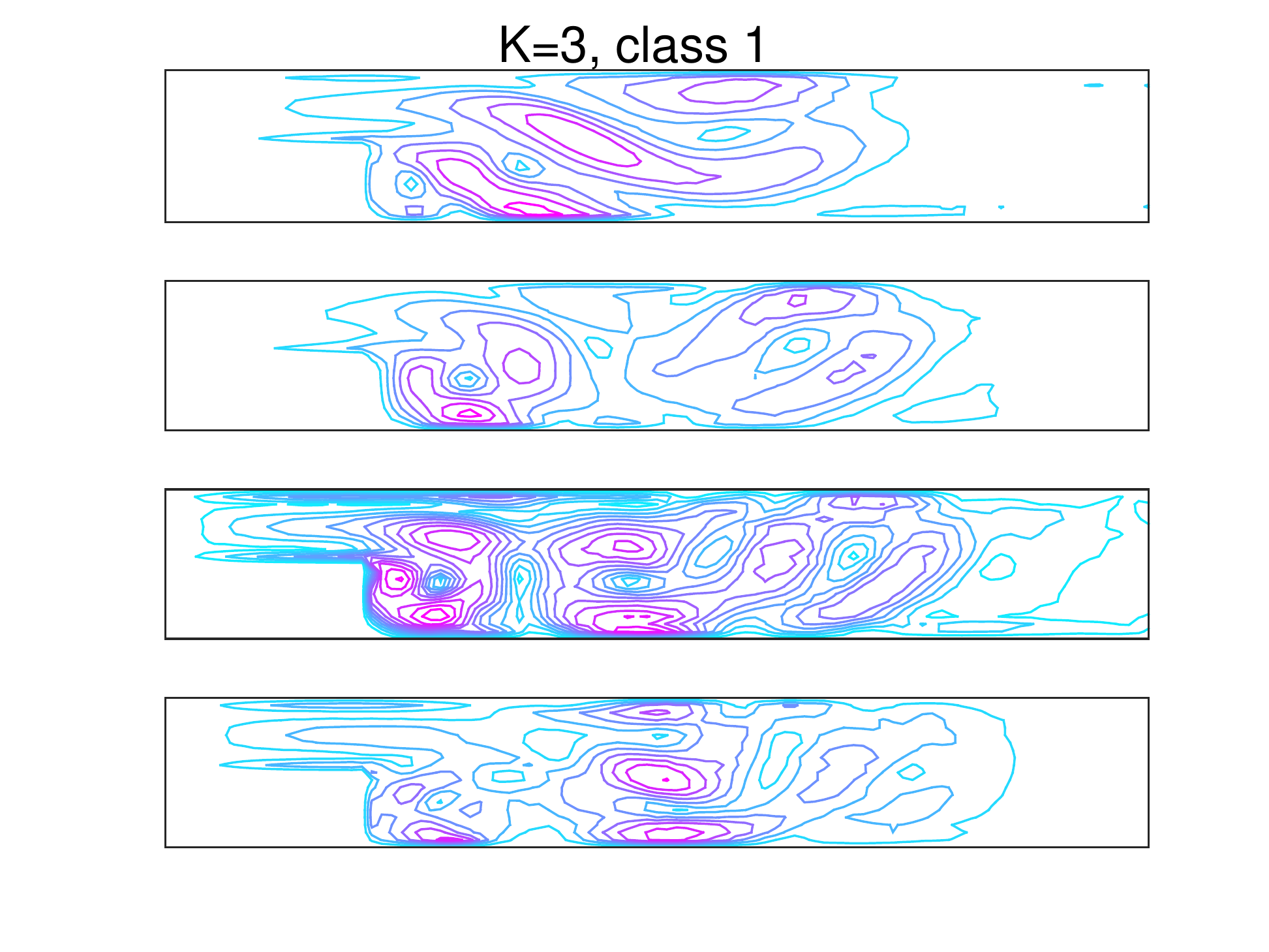}  
      \includegraphics[width=0.30 \textwidth,trim=50 30 50 0,clip]{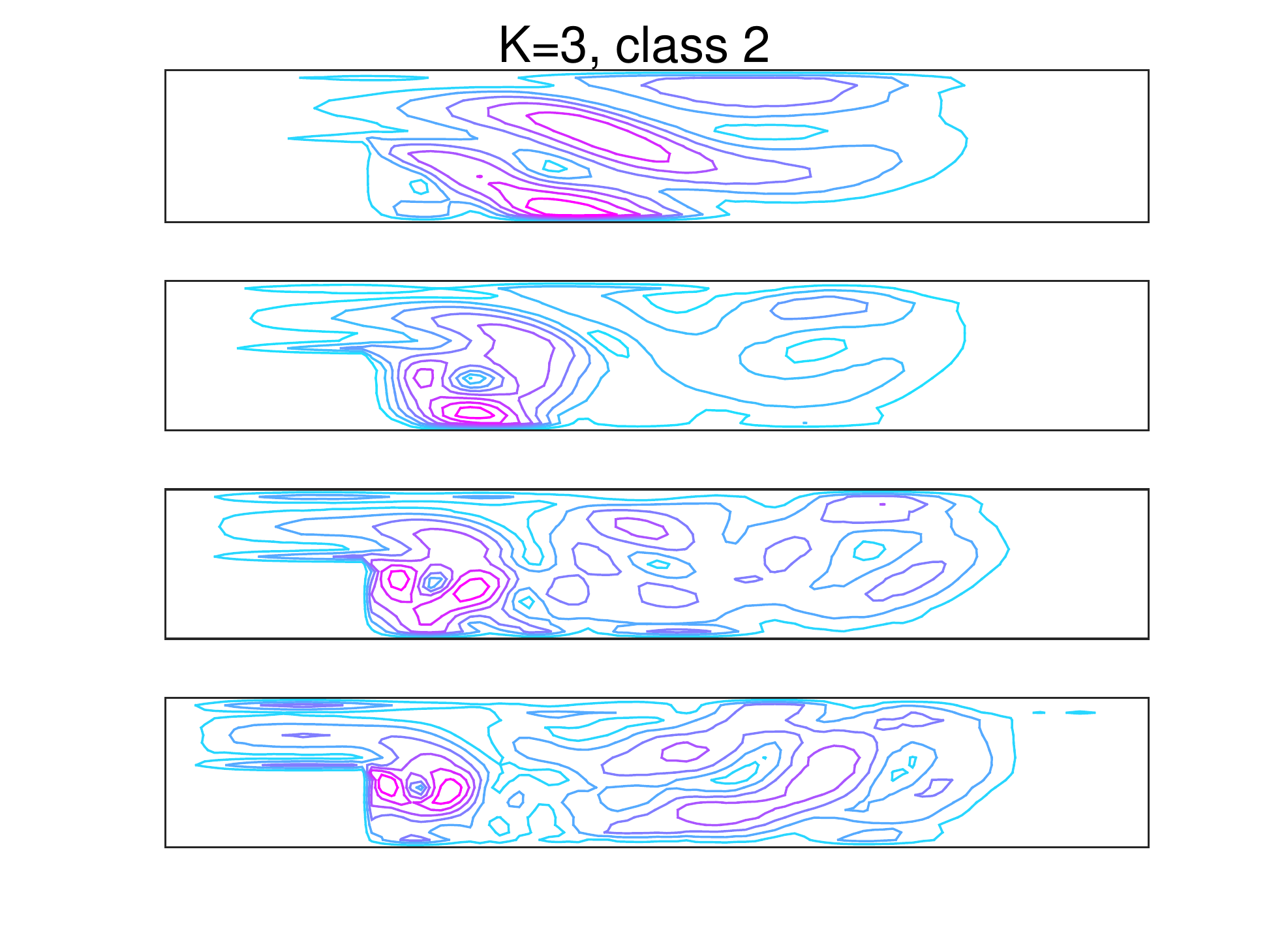} 
      \includegraphics[width=0.30 \textwidth,trim=50 30 50 0,clip]{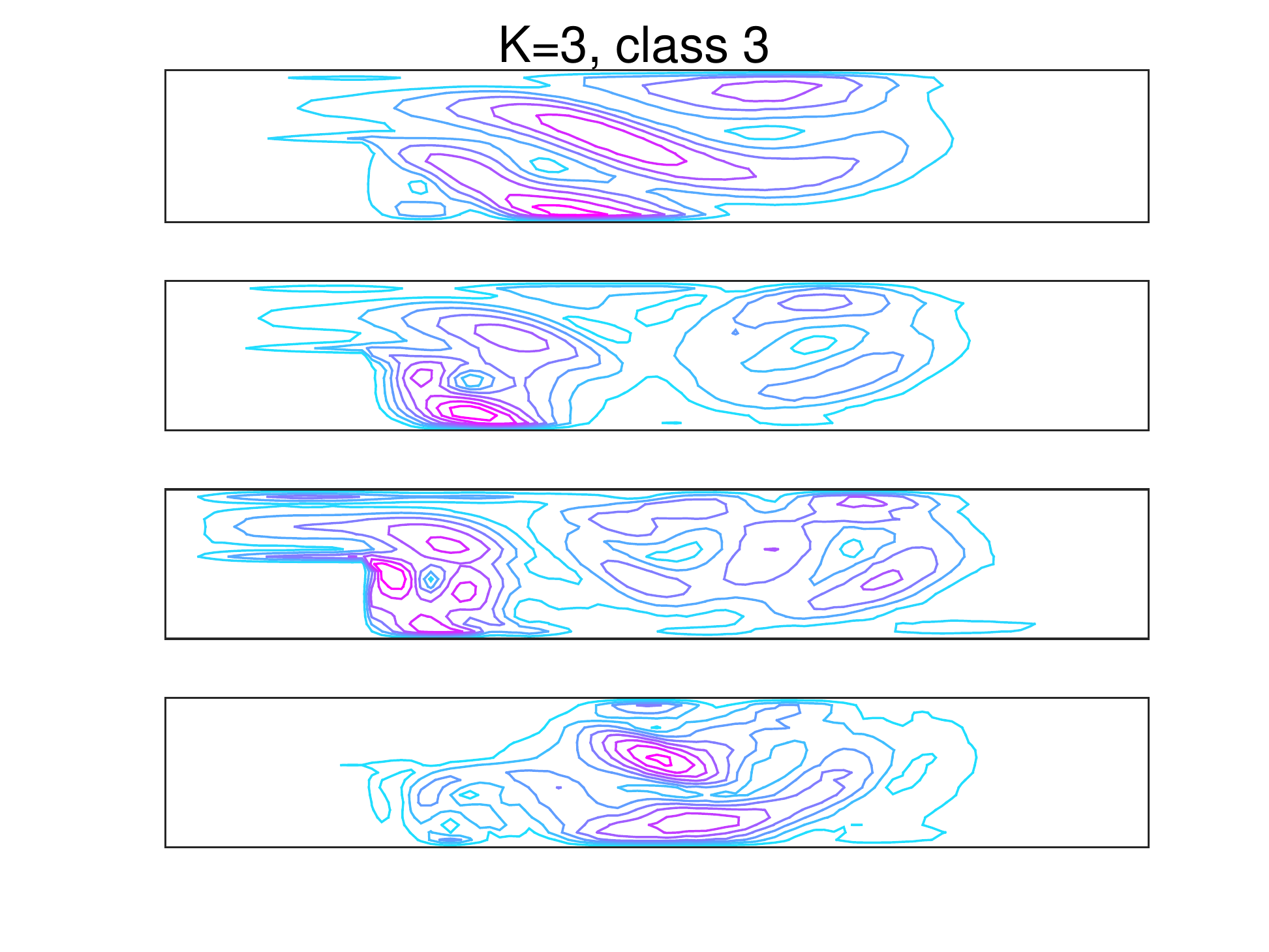} 
      \caption{Contours of the first four CPOD basis functions in each class for $K=1,2$ and $3$ }  \label{F_PODbasis1}
\end{figure}        
        
From the clustering results of modified t-gCVT, the labels of these 300 training samples are known. The errors of CPOD-based SROM that directly use the training data and their known labels are given in the Table~\ref{T_train_err1}, and the statistics of $L^2(D)$-norm error between finite element solution and CPOD reduced-order solution are shown in Figure~\ref{F_train_EVt1}. Clearly, when the class labels of samples are known, the CPOD-based SROM is more accurate and more stable than the standard POD-based SROM. This illustrates that it is feasible to use CPOD basis functions to improve the accuracy of the reduced-order model. Figure~\ref{F_train_simula1} gives the simulation results of two samples in the training set, which more intuitively shows the performance of the CPOD basis functions.        
        
\begin{table}[ht]
        \centering
        \small
        \begin{spacing}{1.3}
        \caption{Error estimates of CPOD-based SROM by using 300 labelled training data $\widehat{U}$ }   \label{T_train_err1}
        \begin{tabular}{@{}ccccc@{}}
              \hline
              $K$ & $\widetilde{\mathcal{E}}_K $ & $\widetilde{\mathcal{E}}_K^r $ & $\widetilde{\mathcal{V}}_K $ & $\widetilde{\mathcal{V}}_K^r $   \\
              \hline
              1 &  0.6736  &  3.0114\%  &  0.6788  &  0.1325\%   \\ 
              2 &  0.6229  &  2.7493\%  &  0.1879  &  0.0361\%   \\
              3 &  0.5516  &  2.4526\%  &  0.1280  &  0.0261\%   \\
              \hline 
       \end{tabular}
       \end{spacing}
\end{table}

\begin{figure}[ht]
        \centering 
        \begin{rotate}{90}
                \hspace{15pt}   \scriptsize $ \mathbb{E} \left[ \| \mathbf{u} - \widetilde{ \mathbf{u}}^K \|^2_{L^2(D)} \right] $
        \end{rotate}
        \hspace{1pt}
        \includegraphics[width=0.32 \textwidth,trim=20 20 30 0,clip]{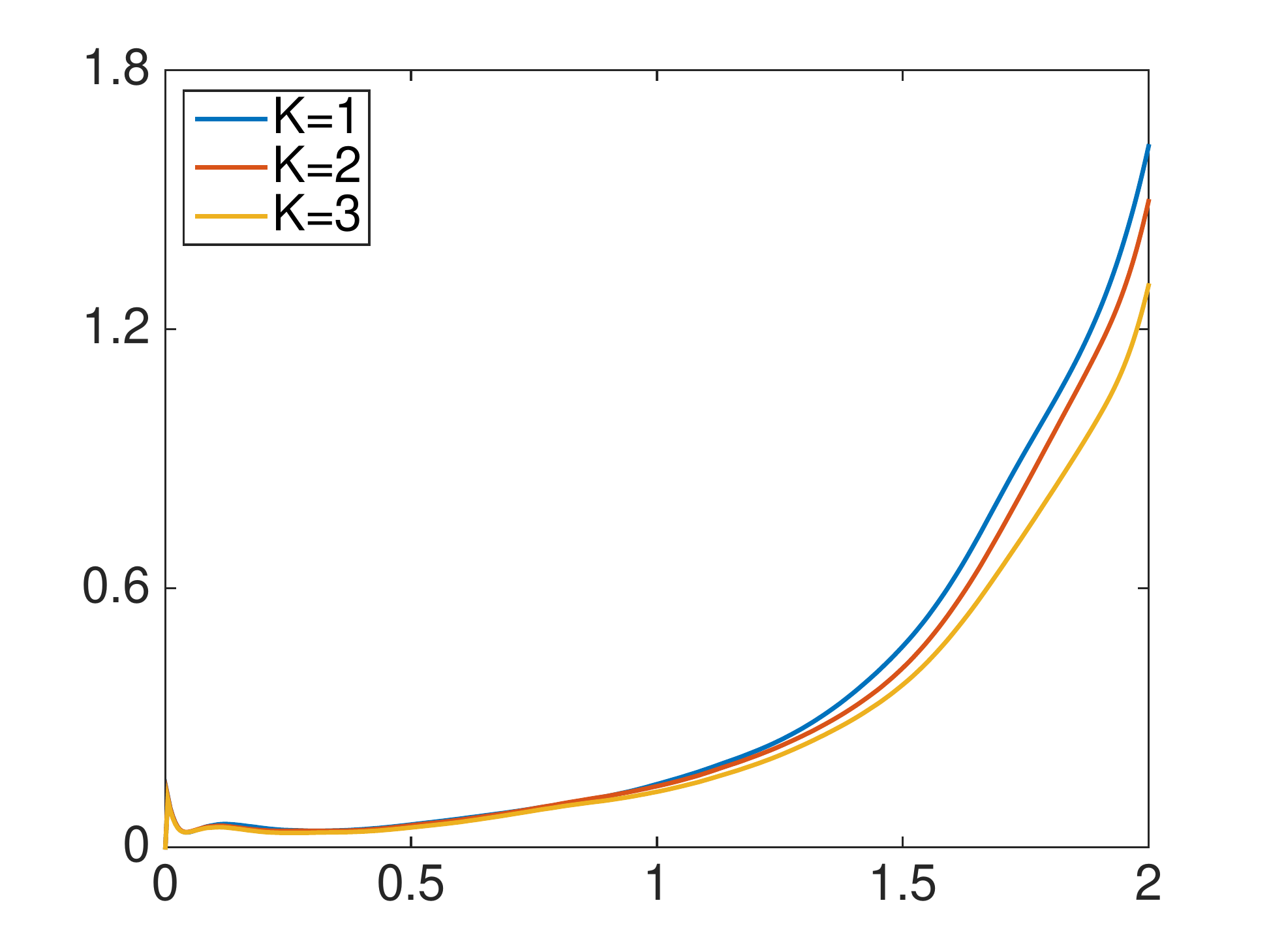} 
        \qquad
        \begin{rotate}{90}
                \hspace{15pt}   \scriptsize $ \mathbb{V} \left[ \| \mathbf{u} - \widetilde{ \mathbf{u}}^K \|^2_{L^2(D)} \right] $
        \end{rotate}
        \hspace{1pt}
        \includegraphics[width=0.32 \textwidth,trim=20 20 30 0,clip]{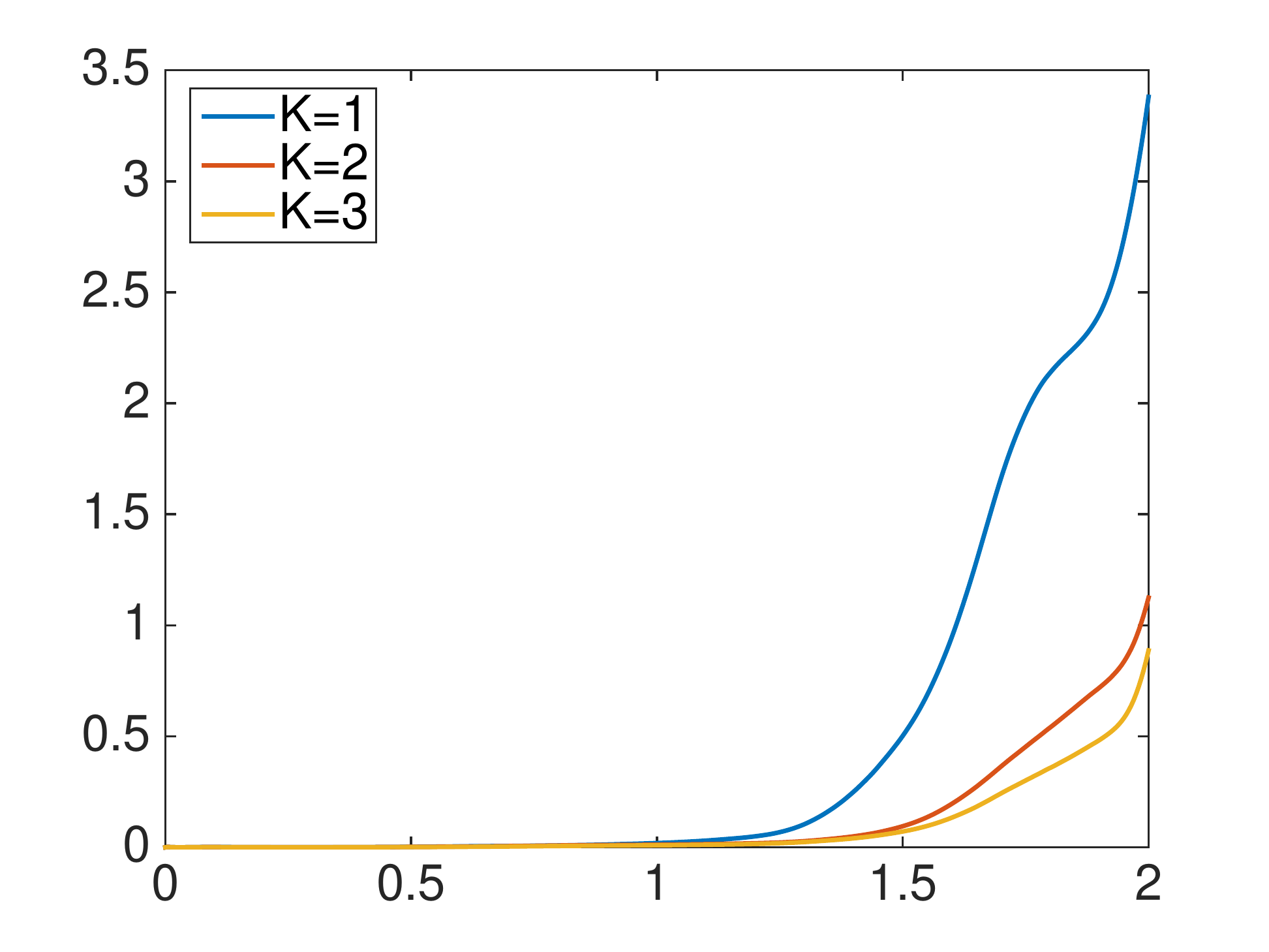} \\
        \footnotesize   \hspace{1.5em} $t$   \hspace{19em}  $t$
        \caption{Error estimates of CPOD-based SROM with different $K$}   \label{F_train_EVt1}
\end{figure}    

\begin{figure}[H]
        \centering 
        \includegraphics[width=0.30 \textwidth,trim=10 2 40 20,clip]{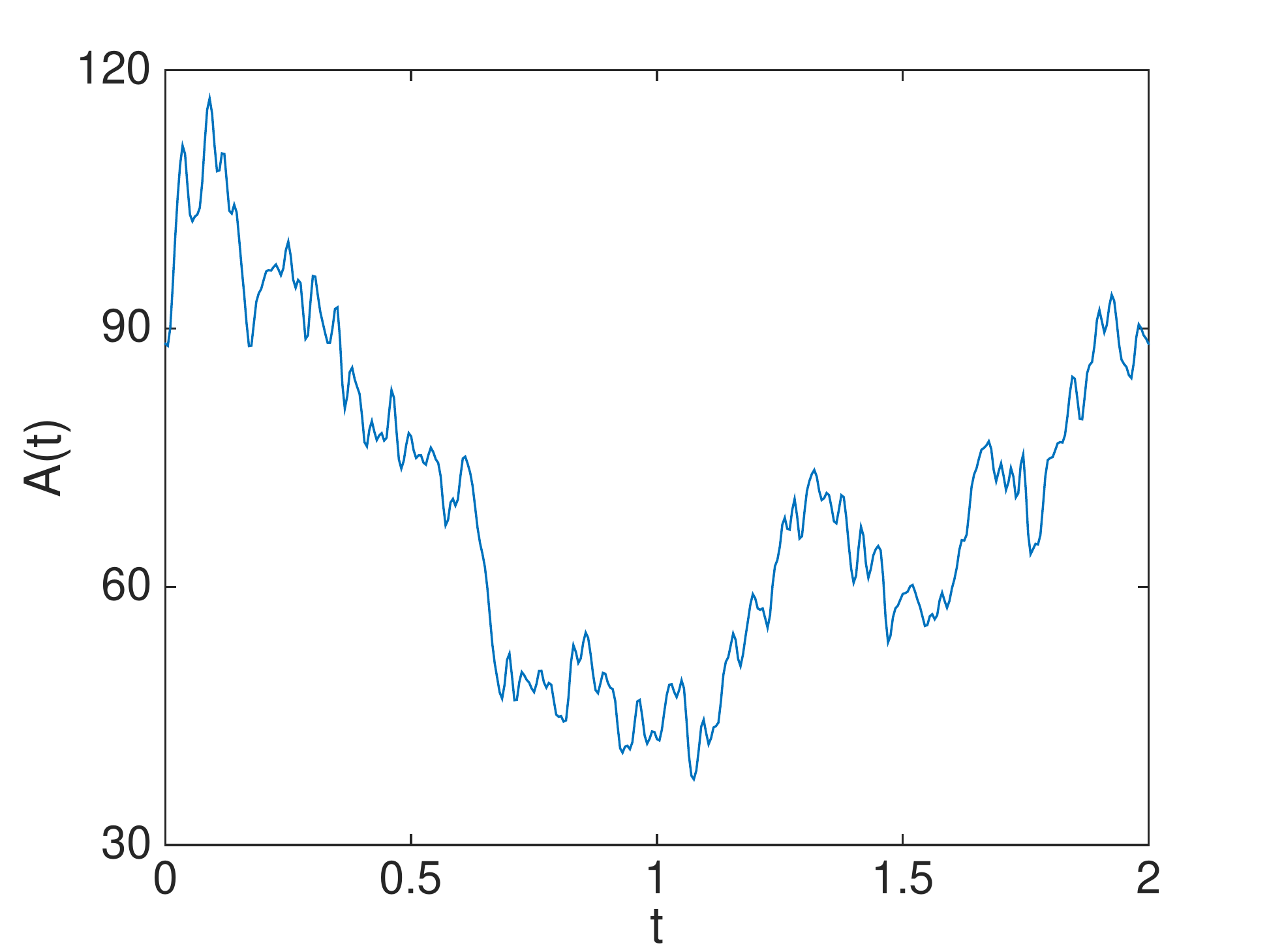}  
        \includegraphics[width=0.30 \textwidth,trim=40 20 35 20,clip]{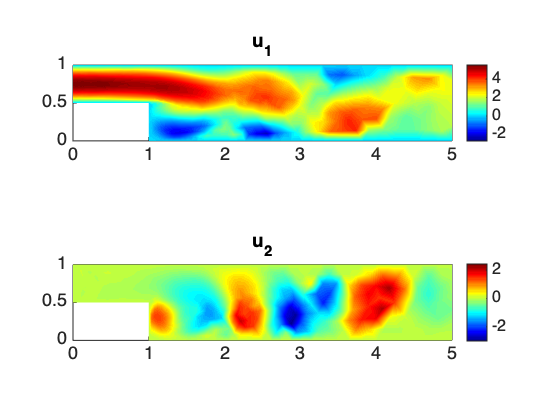}
        \quad
        \begin{rotate}{90}
                \hspace{30pt}   \scriptsize $ \| \mathbf{u} - \widetilde{ \mathbf{u}}^K \|^2_{L^2(D)}$
        \end{rotate}
        \includegraphics[width=0.30 \textwidth,trim=20 3 40 20,clip]{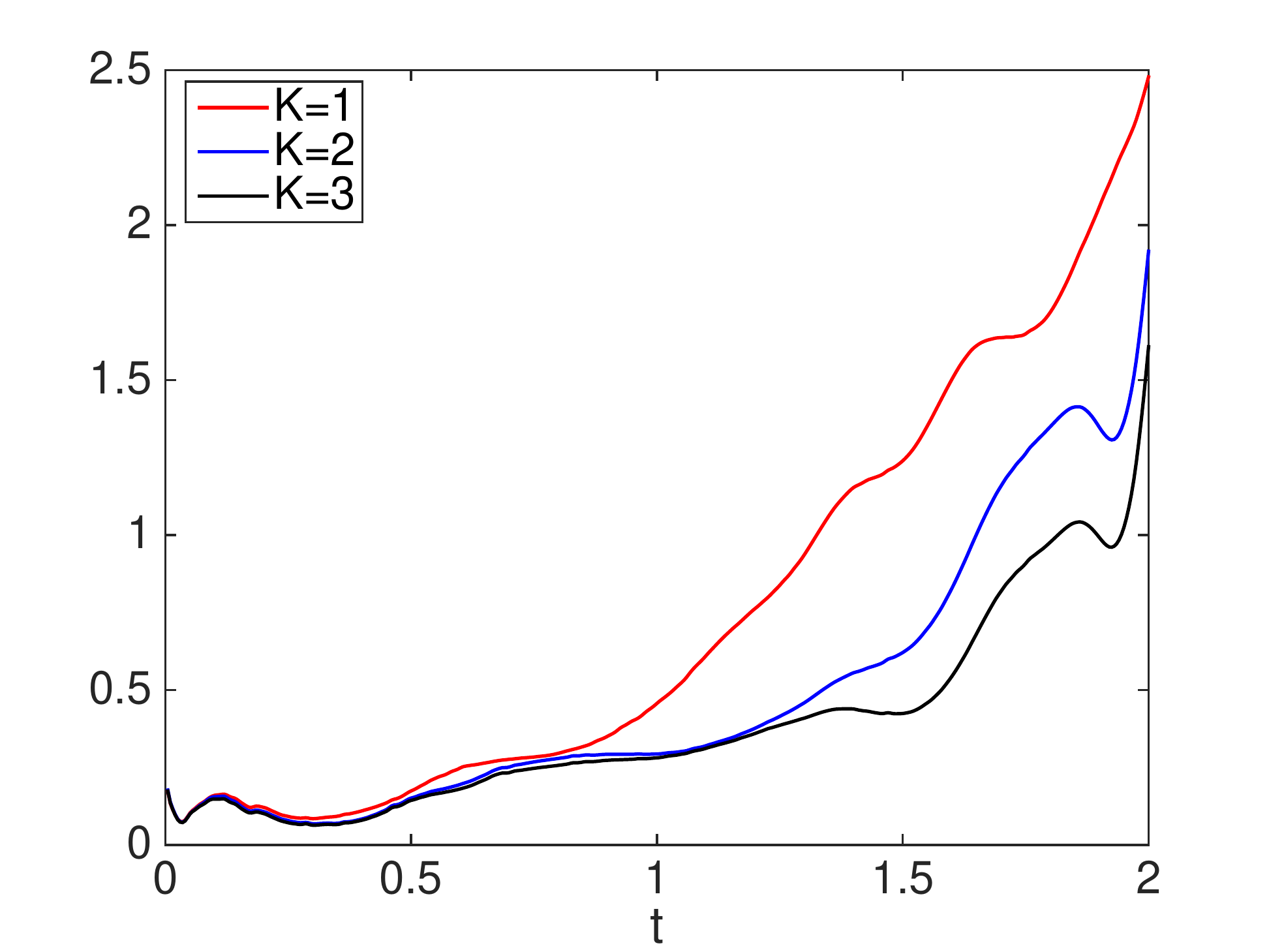}  \\
        \includegraphics[width=0.30 \textwidth,trim=10 2 40 20,clip]{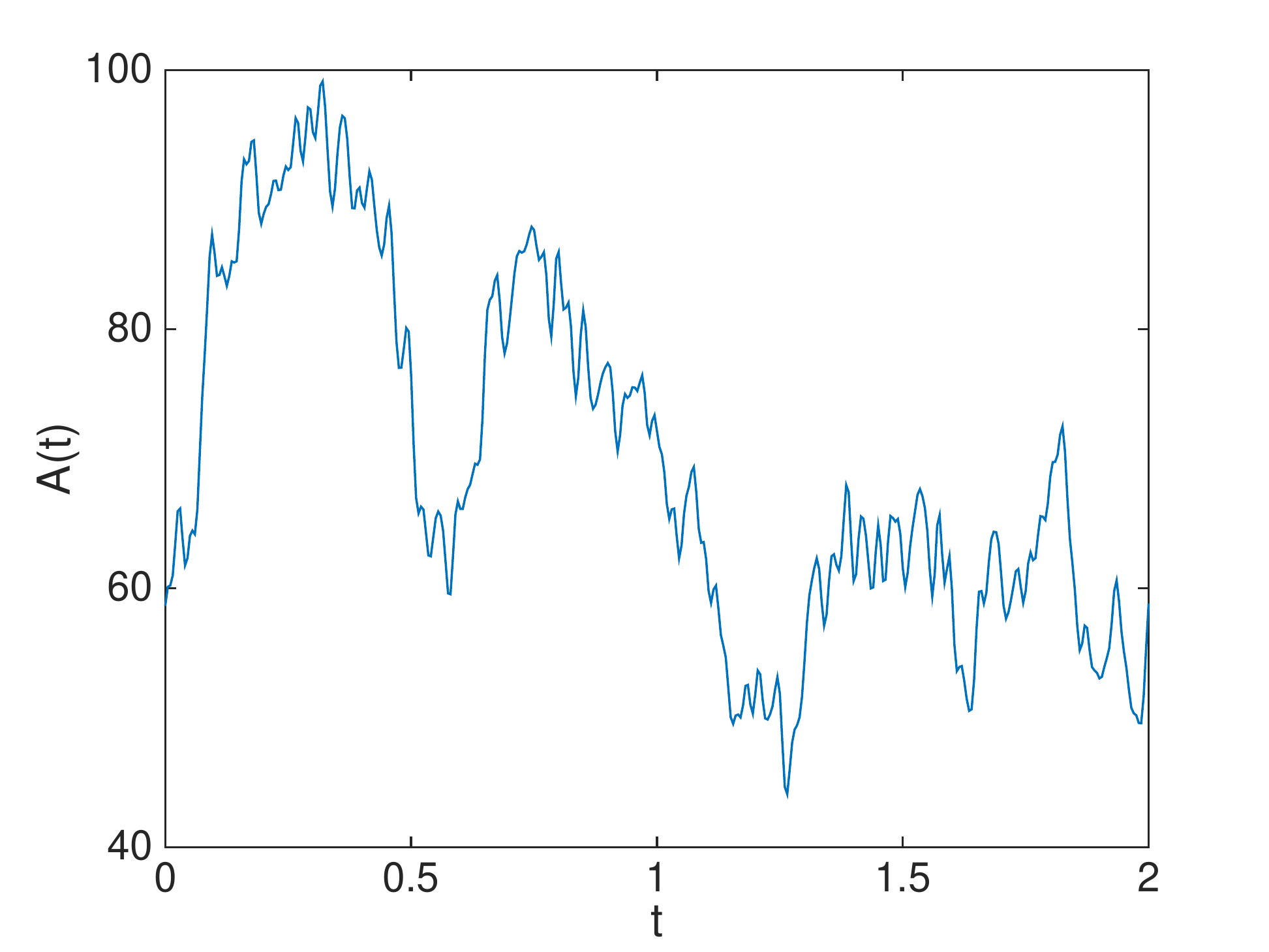}  
        \includegraphics[width=0.30 \textwidth,trim=40 20 40 20,clip]{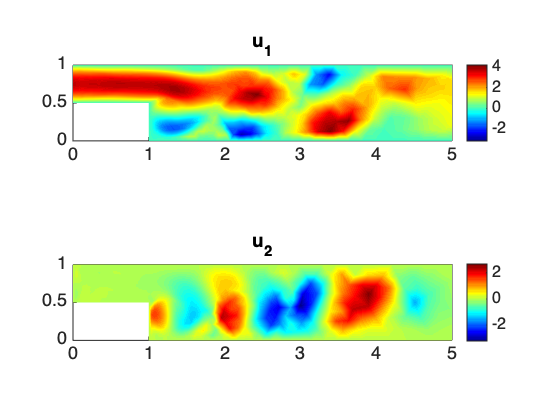}  
        \quad
        \begin{rotate}{90}
               \hspace{30pt}   \scriptsize $ \| \mathbf{u} - \widetilde{ \mathbf{u}}^K \|^2_{L^2(D)}$
        \end{rotate}
        \includegraphics[width=0.30 \textwidth,trim=20 3 40 20,clip]{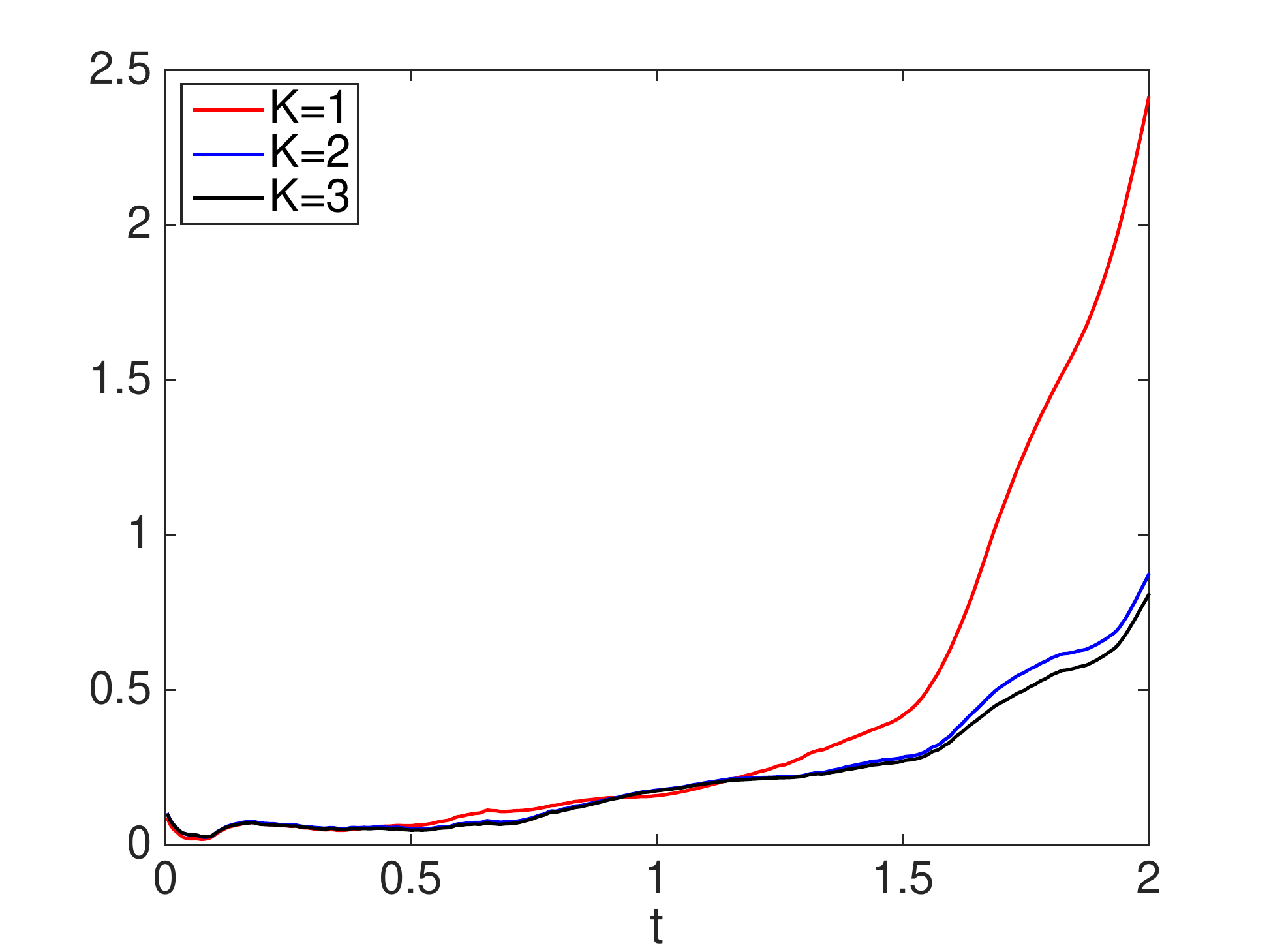}   \\
        \caption{Two realizations of the strength $A(t)$ in stochastic inlet velocity $u_{ \text{in}}$ (left), and their corresponding finite element solutions $\mathbf{u}=(u_1, u_2)^\top$ at time $T$ (middle), and the errors of CPOD approximate solutions (right) }    \label{F_train_simula1}
\end{figure}

\subsubsection{Simulation results of CPOD-NB based SROM}

Use 300 inputs $\{ \bm{\xi}_i\}_{i=1}^{300}$ associated with data set $\widehat{U}$ and the clustering results of modified t-gCVT method to train a naive Bayes pre-classifier. Here, we directly use the naive Bayes classification toolbox of MATLAB. For these 100 test data, use the pre-classifier to get their predicted labels, and use formula (\ref{true_label}) to get their true labels. The resulting confusion matrices are shown in Figure~\ref{F_confusion1}.
It can be observed that when $K=2$, all 53 samples with the true label of 1 are predicted correctly, while 20 of the 47 samples with the true label of 2 are predicted incorrectly. In other words, the predicted labels of 80\% of the test data are consistent with their true labels. Similarly,  70\% of the test samples are correctly predicted for $K=3$. As defined in (\ref{error_rate2}), the error rates of the naive Bayes pre-classifier are 9.22\% when $K=2$ and 20.10\% when $K=3$.
\begin{figure}[ht]
       \centering 
       \begin{rotate}{90}
               \hspace{35pt}   \footnotesize Predicted label
       \end{rotate}
       \includegraphics[width=0.30 \textwidth,trim=0 0 0 2,clip]{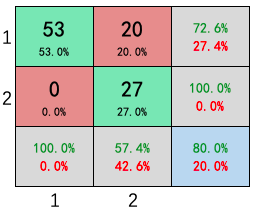}  
       \qquad
       \begin{rotate}{90}
             \hspace{35pt}   \footnotesize Predicted label
       \end{rotate}
       \includegraphics[width=0.30 \textwidth,trim=0 0 0 2, clip]{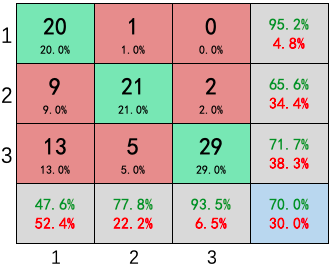} \\
       \footnotesize   True  label    \hspace{120pt}  True  label  \hspace{30pt}
       \caption{Confusion matrices of test data set with 100 samples for $K=2$ and $3$}  \label{F_confusion1}
\end{figure}        
        
Table~\ref{T_test_err1} lists the errors of the CPOD-NB based SROM estimated with the test data. The results on the left are associated with the true labels, while the results on the right are associated with predicted labels. Obviously, whether the true labels or the predicted labels are used, the accuracy of CPOD-NB based SROM is gradually improving with the increase of $K$, even though the misjudgment samples have an impact on the accuracy of our SROM. Figure~\ref{F_test_err1} shows the errors of 4 samples in the test data. It can be seen that the reduced-order solutions calculated by our true best-matched CPOD basis functions have better accuracy than the standard POD reduced-order solution, but the errors may be larger than that of the standard POD method in the case of misjudgment.        
        
\begin{table}[h]
\begin{center}
\begin{minipage}{\textwidth}
\caption{Error estimates of the CPOD-NB based SROM by using 100 test samples under the true labels (left) and predicted labels (right)} \label{T_test_err1}
\begin{tabular*}{\textwidth}{@{\extracolsep{\fill}}lcccccccc@{\extracolsep{\fill}}}
\toprule%
& \multicolumn{4}{@{}c@{}}{True labels} & \multicolumn{4}{@{}c@{}}{Predicted labels} \\\cmidrule{2-5}\cmidrule{6-9}%
$K$ & $\widetilde{\mathcal{E}}_K$ & $\widetilde{\mathcal{E}}_K^r$  & $\widetilde{\mathcal{V}}_K$ & $\widetilde{\mathcal{V}}_K^r$ & $\widetilde{\mathcal{E}}_K$ & $\widetilde{\mathcal{E}}_K^r$  & $\widetilde{\mathcal{V}}_K$ & $\widetilde{\mathcal{V}}_K^r$ \\
\midrule
1  &  0.6256  &  2.8137\%  &  0.4200  &  0.0871\%  &  0.6256  &  2.8137\%  &  0.4200  &  0.0871\%     \\
2  &  0.5062  &  2.2723\%  &  0.0738  &  0.0160\%  &  0.5477  &  2.4340\%  &  0.0925  &  0.0191\%    \\
3  &  0.4576  &  2.0594\%  &  0.0611  &  0.0133\%  &  0.5038  &  2.2353\%  &  0.0754  &  0.0156\%   \\ 
\midrule
\end{tabular*}
\end{minipage}
\end{center}
\end{table}  
        
\begin{figure}[ht]
       \centering 
       \begin{rotate}{90}
                 \hspace{15pt} \scriptsize $ \| \mathbf{u} - \widetilde{ \mathbf{u}}^K \|^2_{L^2(D)}$
       \end{rotate}
       \includegraphics[width=0.24 \textwidth,trim=30 20 40 20,clip]{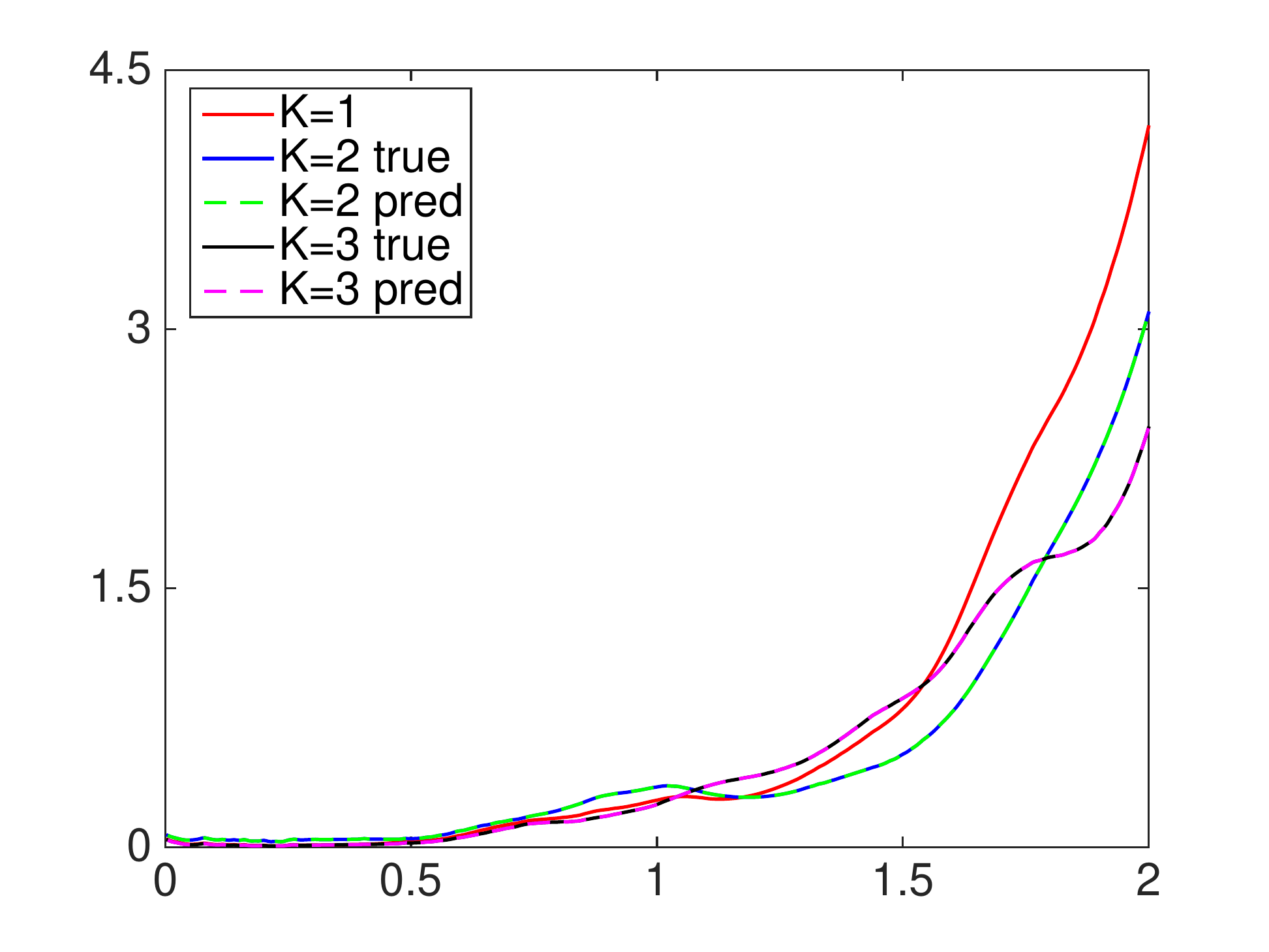}  
       \includegraphics[width=0.24 \textwidth,trim=30 20 40 20,clip]{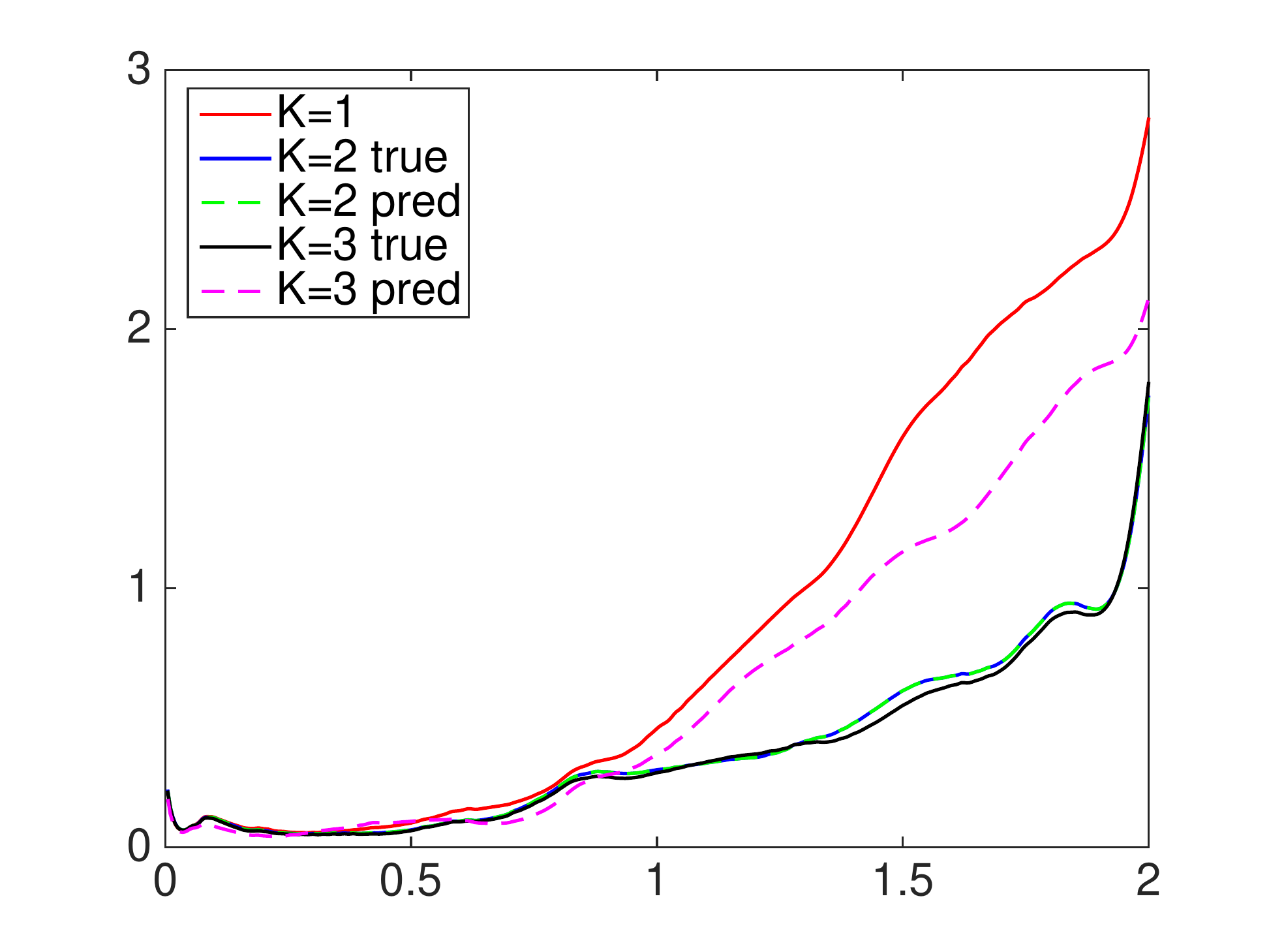} 
       \includegraphics[width=0.24 \textwidth,trim=30 20 40 20,clip]{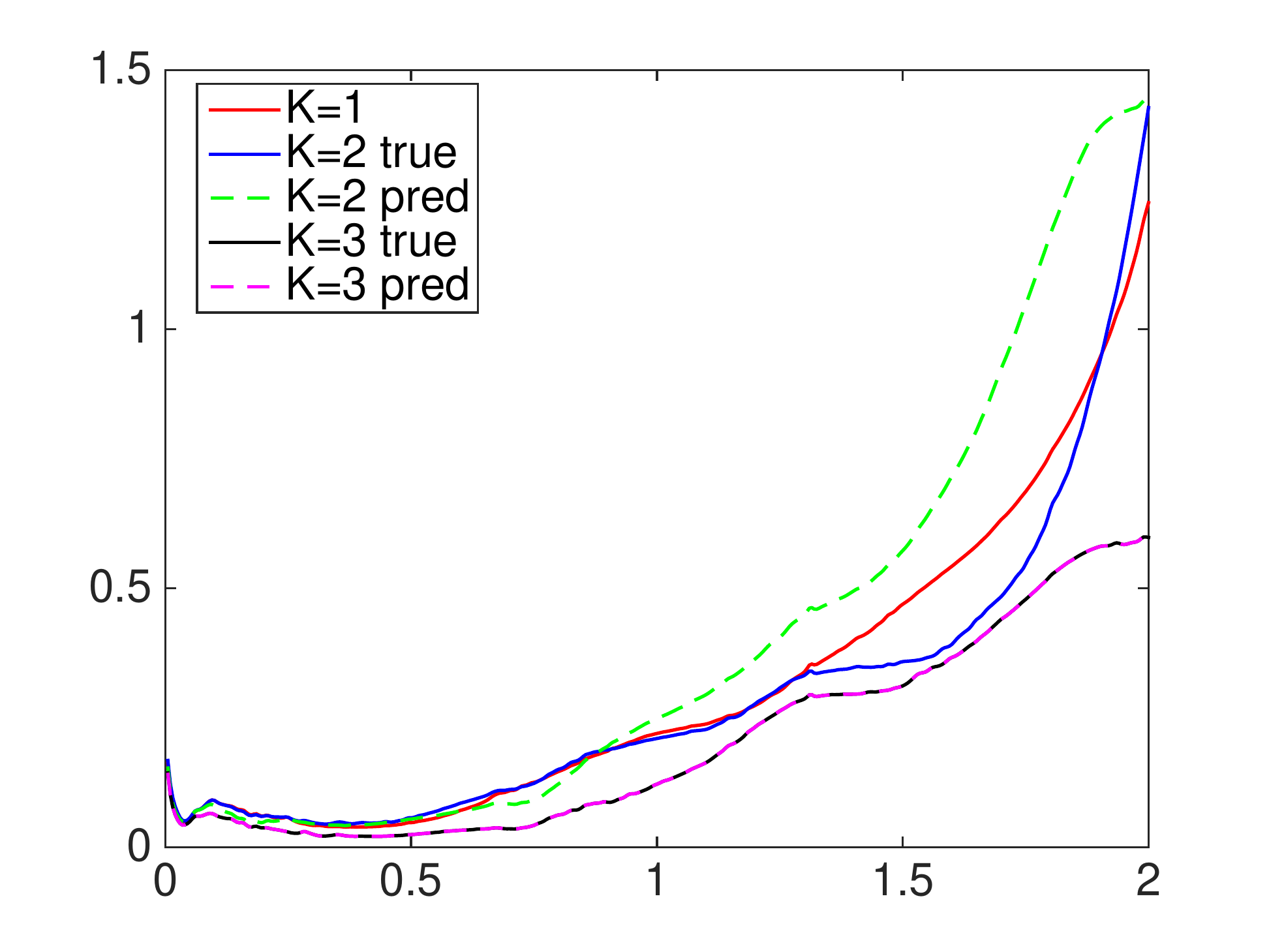}
       \includegraphics[width=0.24 \textwidth,trim=30 20 40 20,clip]{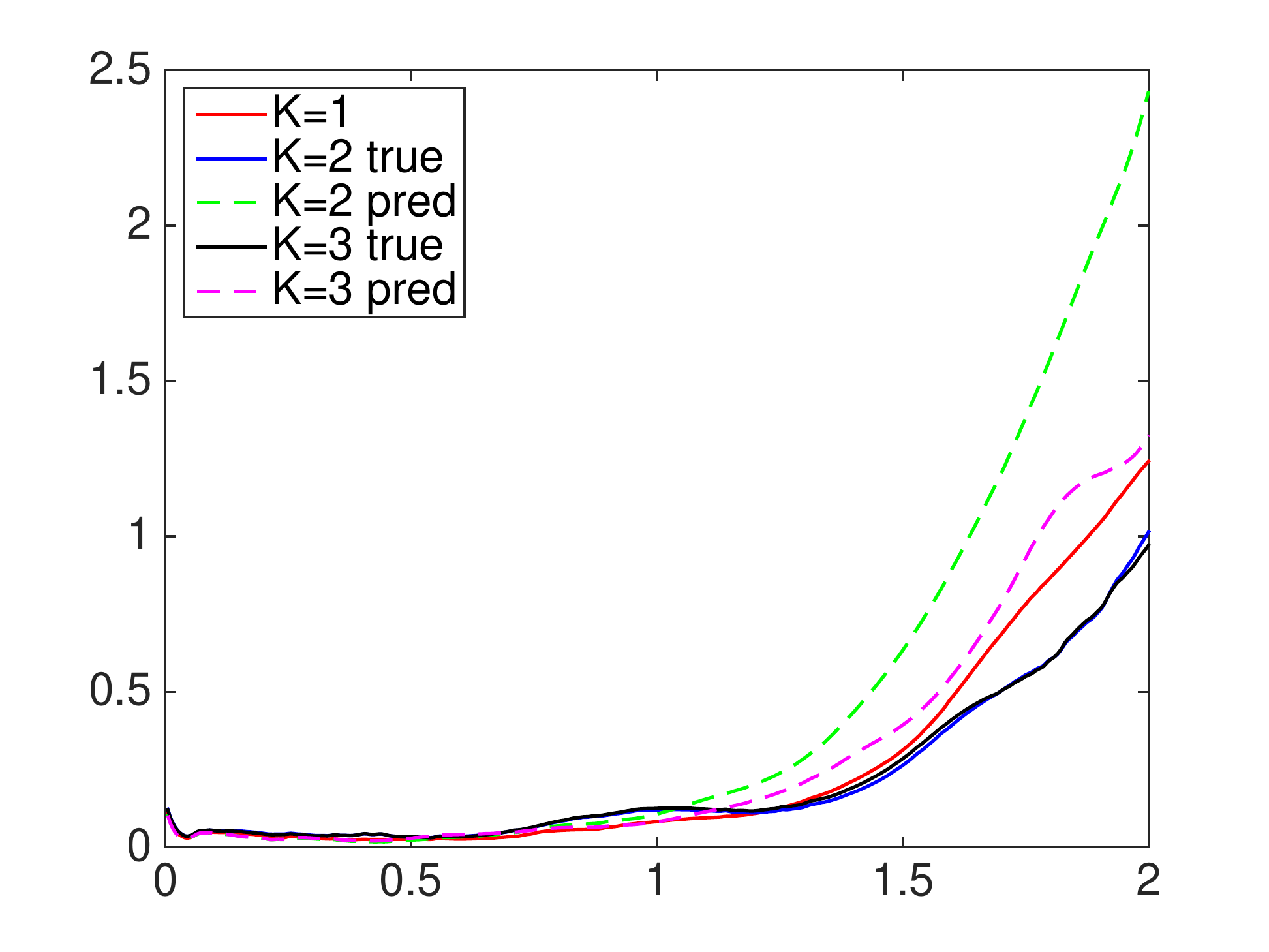}  \\ 
       \footnotesize  \hspace{1em}   $t$  \hspace{12em}  $t$   \hspace{12em}  $t$  \hspace{12em}  $t$  \hspace{10pt} \\
       \caption{The  errors of the CPOD-NB approximate solutions of four samples in the test data}    \label{F_test_err1}
\end{figure}

\subsection{Hat-type strength with white noise}

In this numerical experiment, the strength $A$ takes the following form
\begin{equation}
        A (t; \omega) = \sigma \frac{dW}{dt} + 60 \left\{
        \begin{array}{ll}
               1+at  &  t \in [0,1],  \\
               1+a(2-t)  &  t \in [1,2],
       \end{array}
       \right.
\end{equation}
where the height parameter $a \in \{ 0.8, 0.9, 1.0, 1.1, 1.2 \}$, and the amplification factor of white noise $\sigma=1.5$. 
The white noise $\frac{dW}{dt}$ is approximated by the piecewise constant
\begin{equation}   \label{white_noise}
        \frac{dW^m}{dt}=\frac{1}{\sqrt{ \delta t }} \sum_{i=0}^{m-1} \chi_i (t) \eta_i (\omega),
\end{equation}
where the components of $\bm{\eta} (\omega)=[\eta_0(\omega), \ldots, \eta_{m-1}(\omega) ]^ \top $ are i.i.d. random variables and satisfy the standard normal distribution $\mathcal{N} (0,1)$, and the characteristic function $\chi_i (t)$ is defined by
\begin{equation}    \label{chi}
        \chi_i (t) = \left\{
        \begin{array}{ll}
                1  \qquad t \in [t_i, t_{i+1} ),  \\
                0  \qquad \text{otherwise}. 
        \end{array}
        \right.
\end{equation}
Figure~\ref{F_hat_A0} shows the strengths $A$ corresponding to different coefficients $a$ when not affected by white noise.
\begin{figure}[H]
        \centering 
        \includegraphics[width=0.30 \textwidth,trim=5 0 48 20,clip]{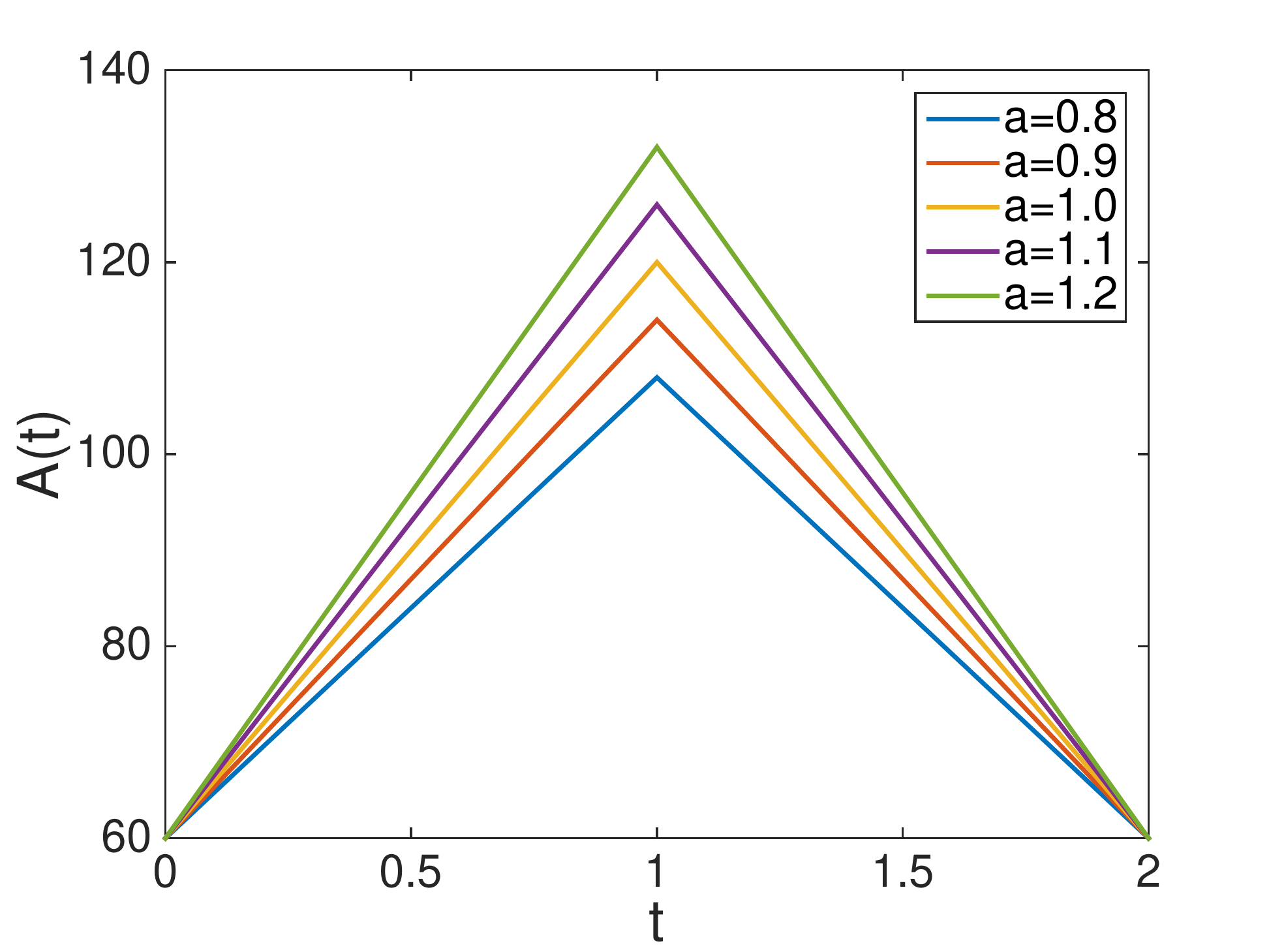} 
        \caption{Strengths $A$ corresponding to different coefficients $a$ ($\sigma=0$)}    \label{F_hat_A0}
\end{figure}
Here, we take $a=0.8, 0.9, 1.0, 1.1$ and 1.2 to generate 80 samples of velocity field respectively, and use these samples to form a data set $\widehat{U}$ for constructing the CPOD basis functions and training the naive Bayes pre-classifier.
In addition, use these coefficients $a$ to generate 20 samples respectively to form a test set for estimating the error of CPOD-NB based SROM.
        
\subsubsection{Generating CPOD basis functions}

Figure~\ref{F_population2} shows the clustering results of these 400 training data by using the modified t-gCVT method. The dimensions and cumulative energy ratios used in this experiment are listed in Table~\ref{T_dim2}. Although the energy ratios of the second class with $K=2$ and the second and third classes with $K=3$ are all slightly smaller than that with $K=1$, the energy ratios of the first class with $K=2$ and $K=3$ are much larger than that of the standard POD basis functions. Figure~\ref{F_PODbasis2} shows the contours of the first four CPOD basis functions in each class for different $K$.        
        
\begin{figure}[ht]
       \centering 
       \includegraphics[width=0.31 \textwidth,trim=35 20 45 20,clip]{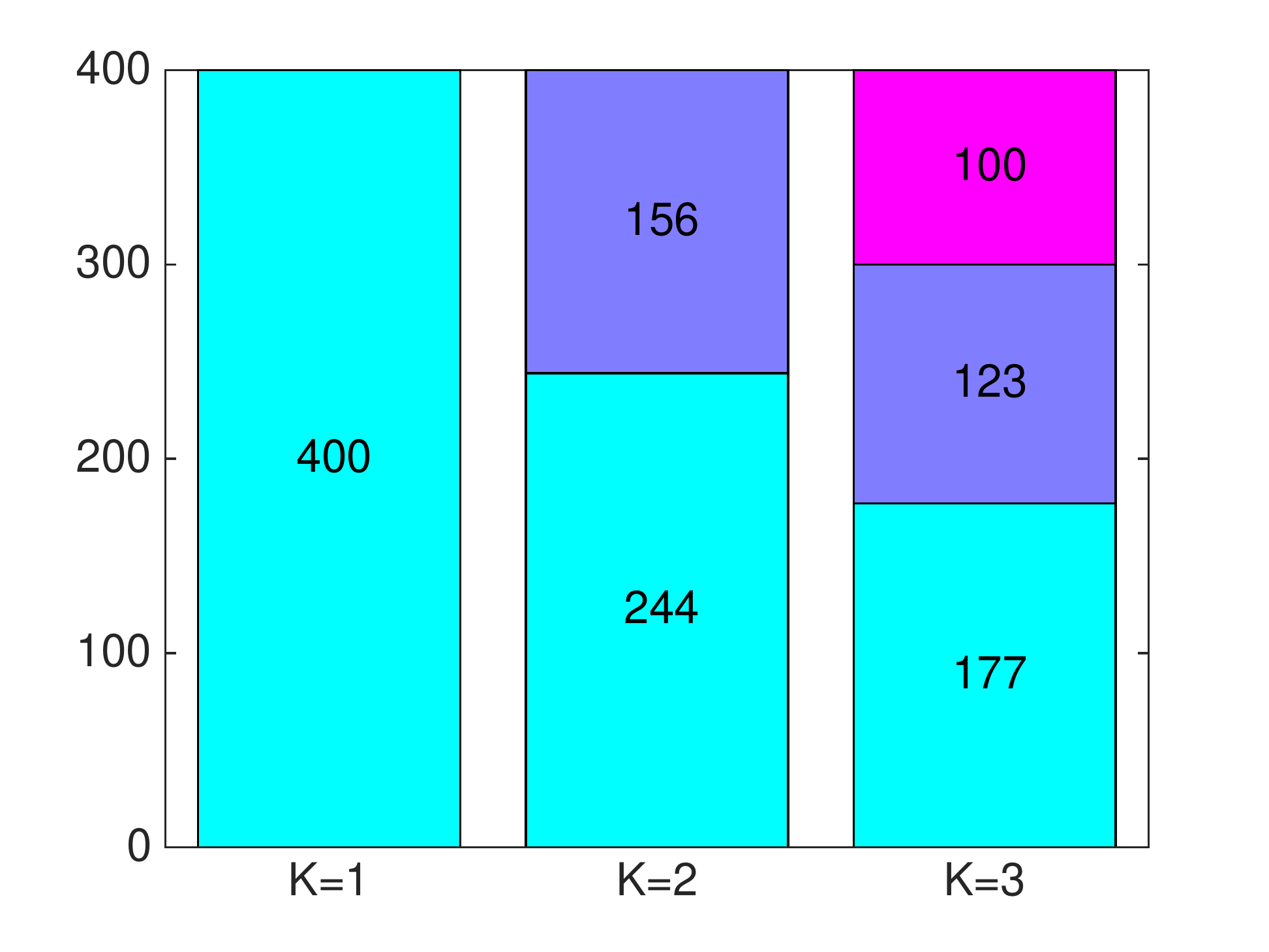}  
       \includegraphics[width=0.31 \textwidth,trim=20 20 45 0,clip]{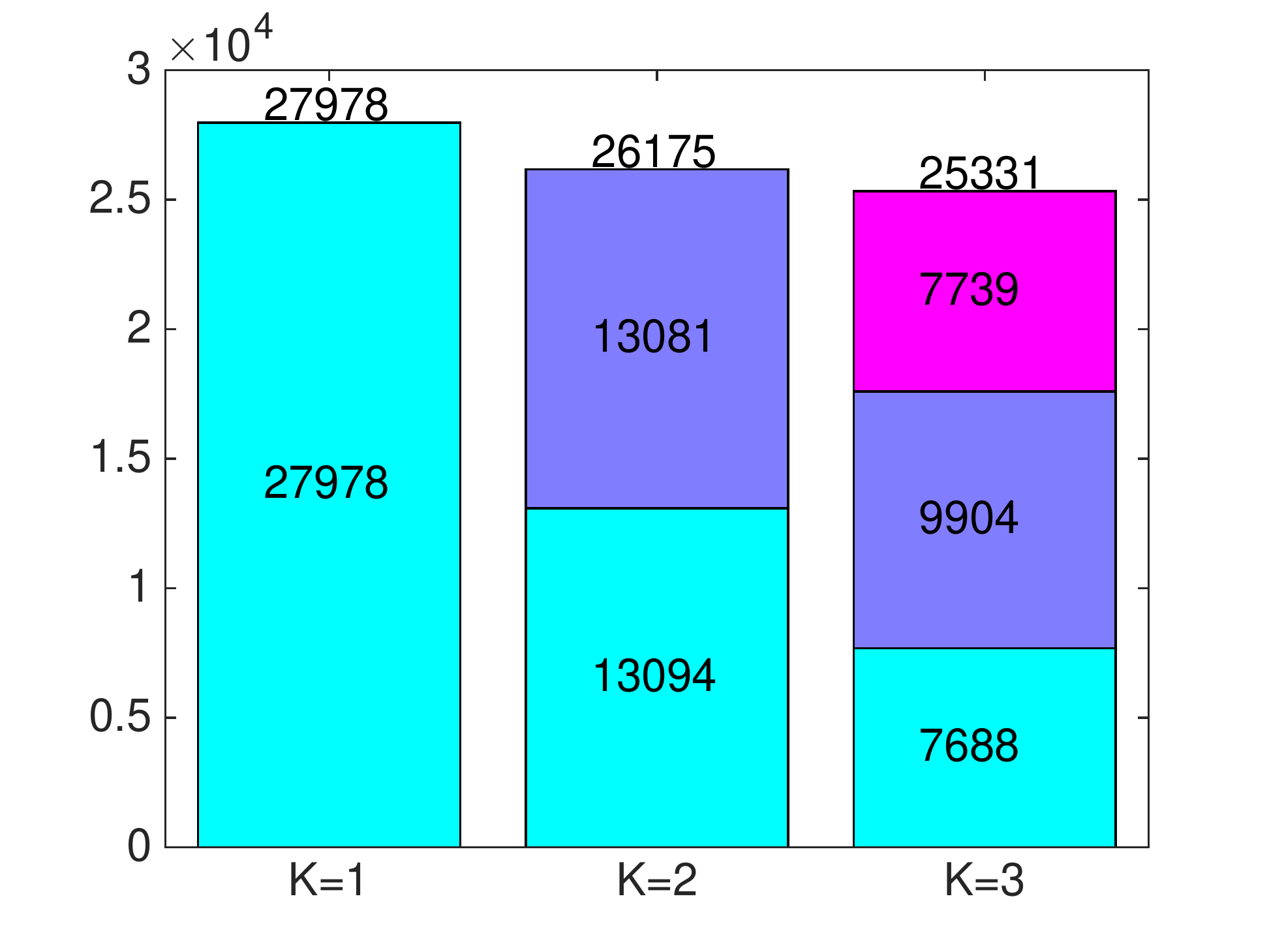}
       \quad
       \begin{rotate}{90}
              \hspace{40pt}   \footnotesize $ \log \lambda $
      \end{rotate}
      \includegraphics[width=0.31 \textwidth,trim=30 20 45 20,clip]{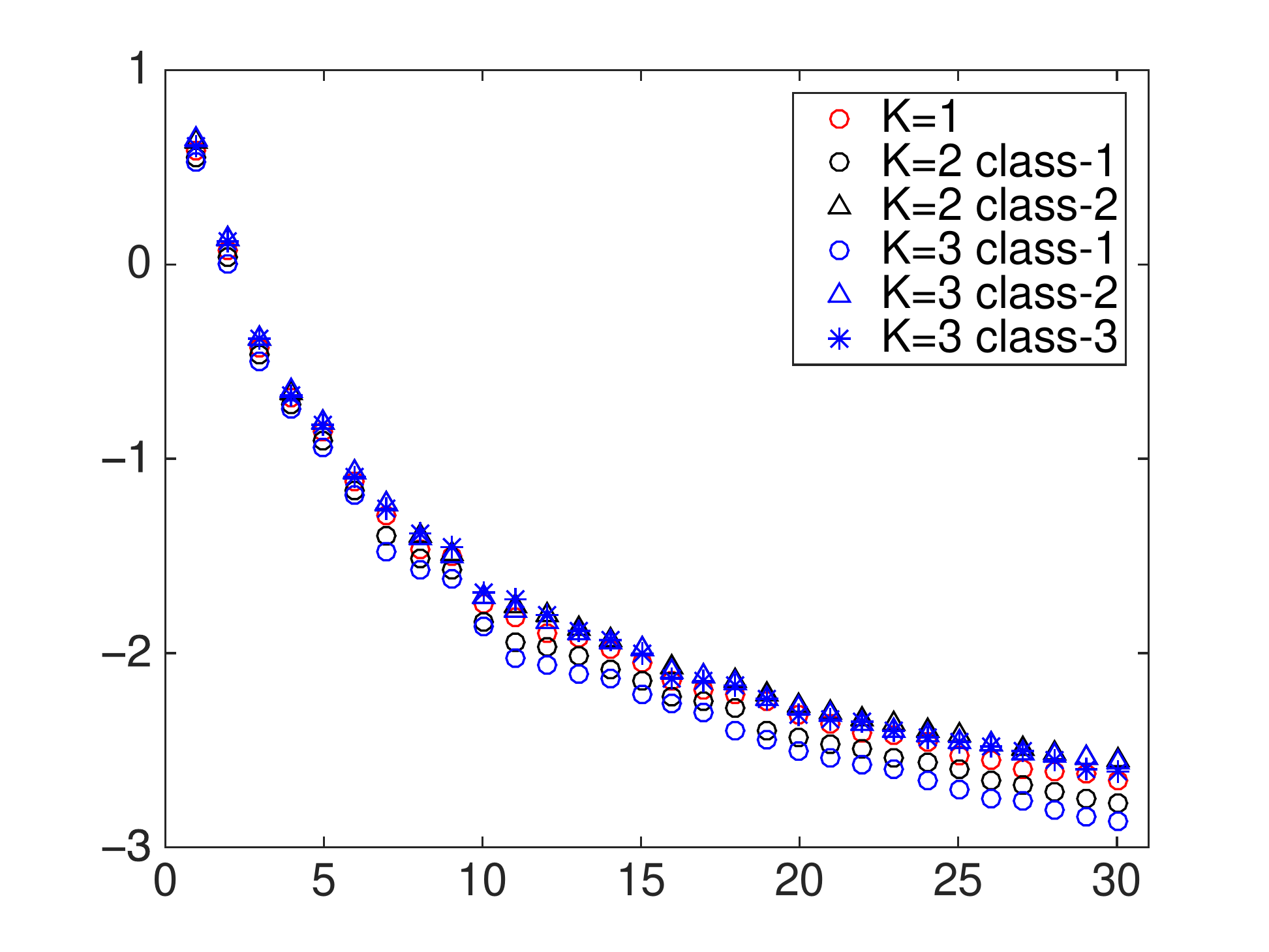}
      \caption{Population $n_k$ (left), energy $\widetilde{\mathcal{E}}^{ \text{t-gCVT} }$ (middle) of data $\widehat{U}$, and the logarithm of eigenvalues (right) corresponding to the first 30 CPOD basis functions in each class for $K=1,2 $ and $3$}  \label{F_population2}
\end{figure}
      
\begin{table}[ht]
       \centering
       \small
       \begin{spacing}{1}
       \caption{The dimension $d_k$ and cumulative energy ratio $\nu_k$ of CPOD basis functions in each class for $K=1, 2$ and $3$}   \label{T_dim2}
       \begin{tabular}{@{}ccccccc@{}}
               \hline
               &  $ K=1 $
               &  \multicolumn{2}{c}{ $ K=2 $ }
               &  \multicolumn{3}{c}{ $ K=3 $ }    \\
               \hline
               class & - & 1 & 2 & 1 & 2 & 3  \\
               $d_k$ & 11 & 11 & 11 & 11 & 11 & 11 \\
               $\nu_k$ &  0.9714  &  0.9761  &  0.9698  &  0.9793  &  0.9710  &  0.9709   \\
               \hline
      \end{tabular}
      \end{spacing}
\end{table}        
        
\begin{figure}[ht]
      \centering 
      \includegraphics[width=0.30 \textwidth,trim=50 30 50 0,clip]{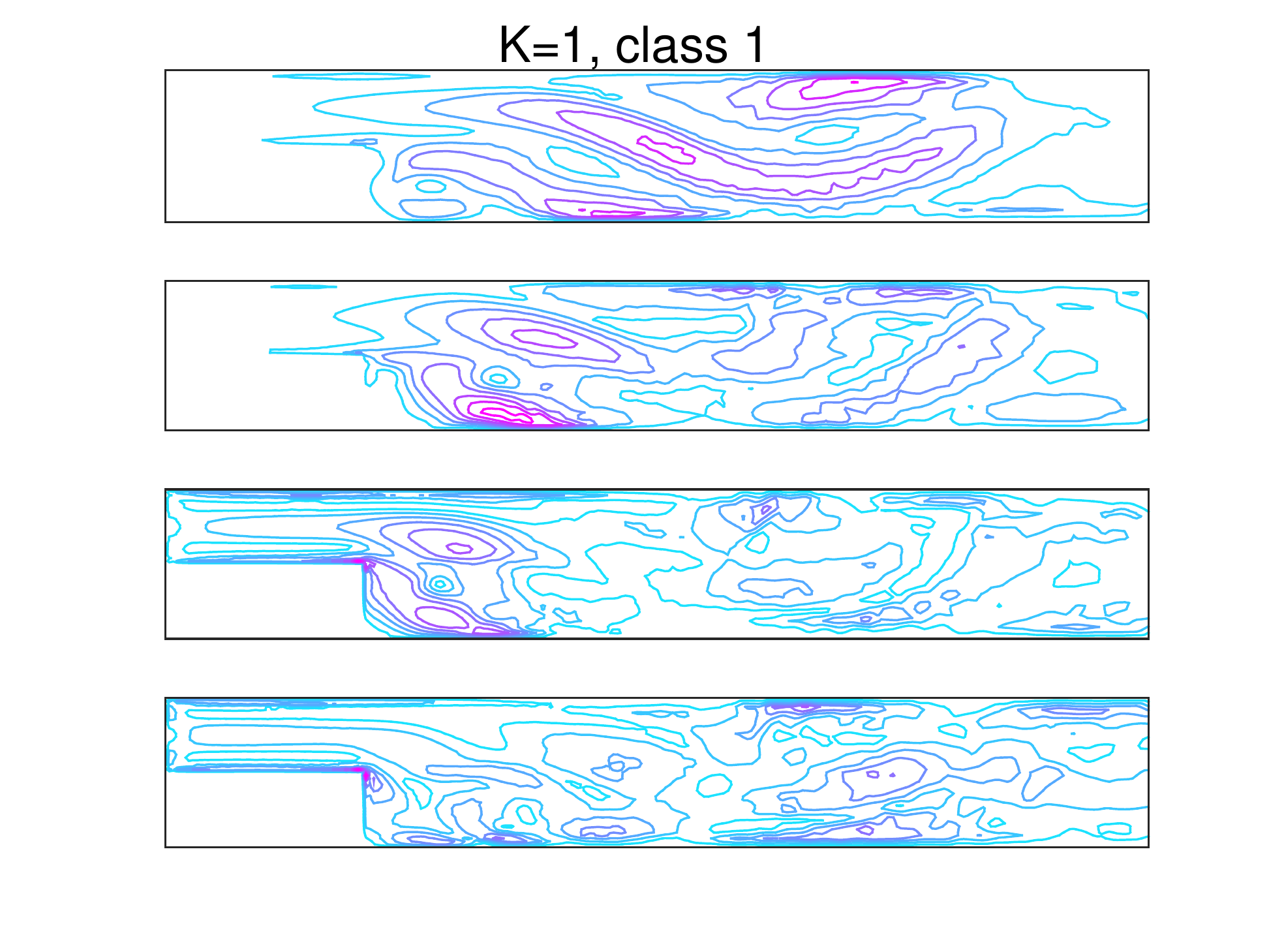}  
      \includegraphics[width=0.30 \textwidth,trim=50 30 50 0,clip]{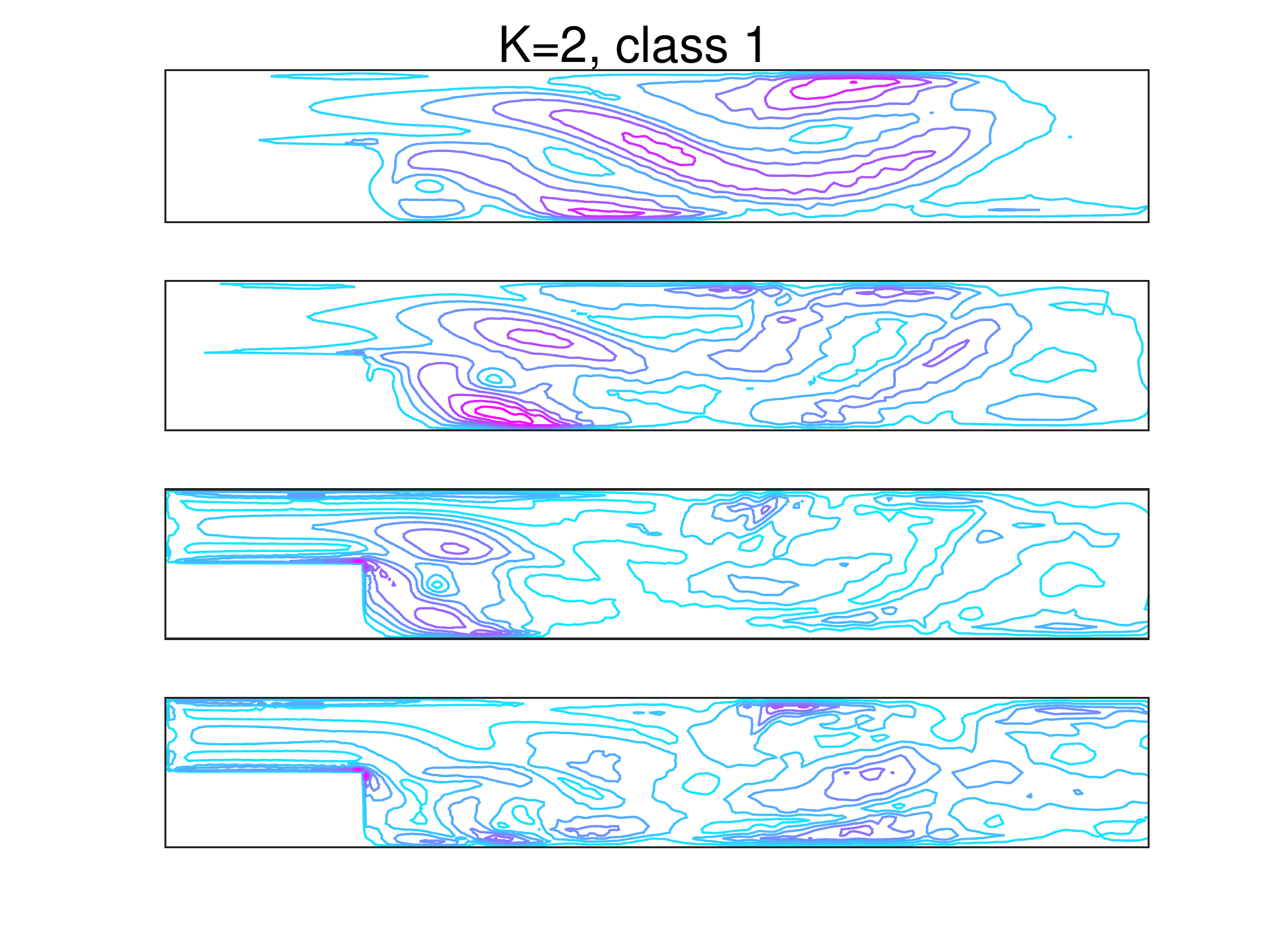} 
      \includegraphics[width=0.30 \textwidth,trim=50 30 50 0,clip]{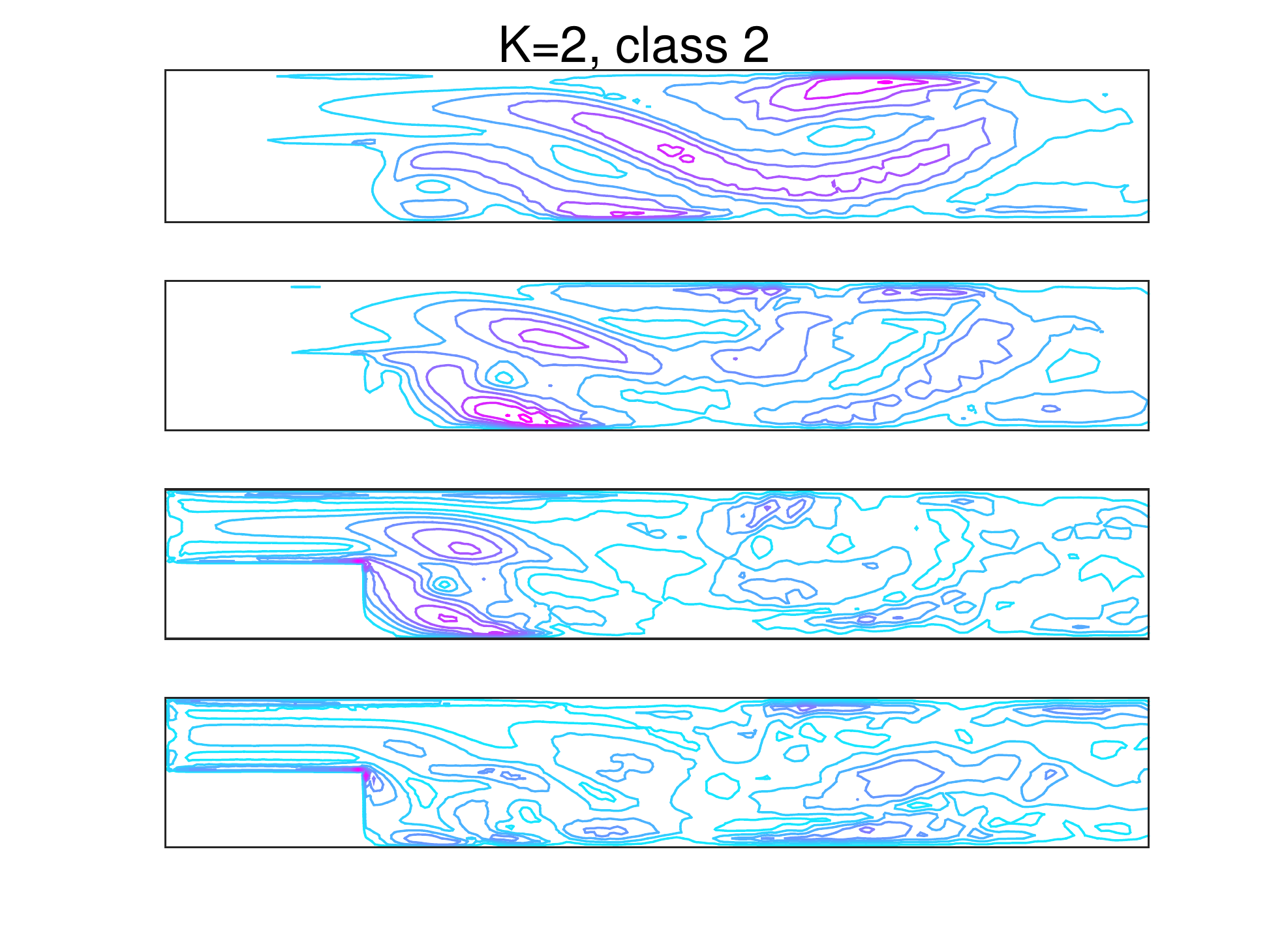} \\
      \includegraphics[width=0.30 \textwidth,trim=50 30 50 0,clip]{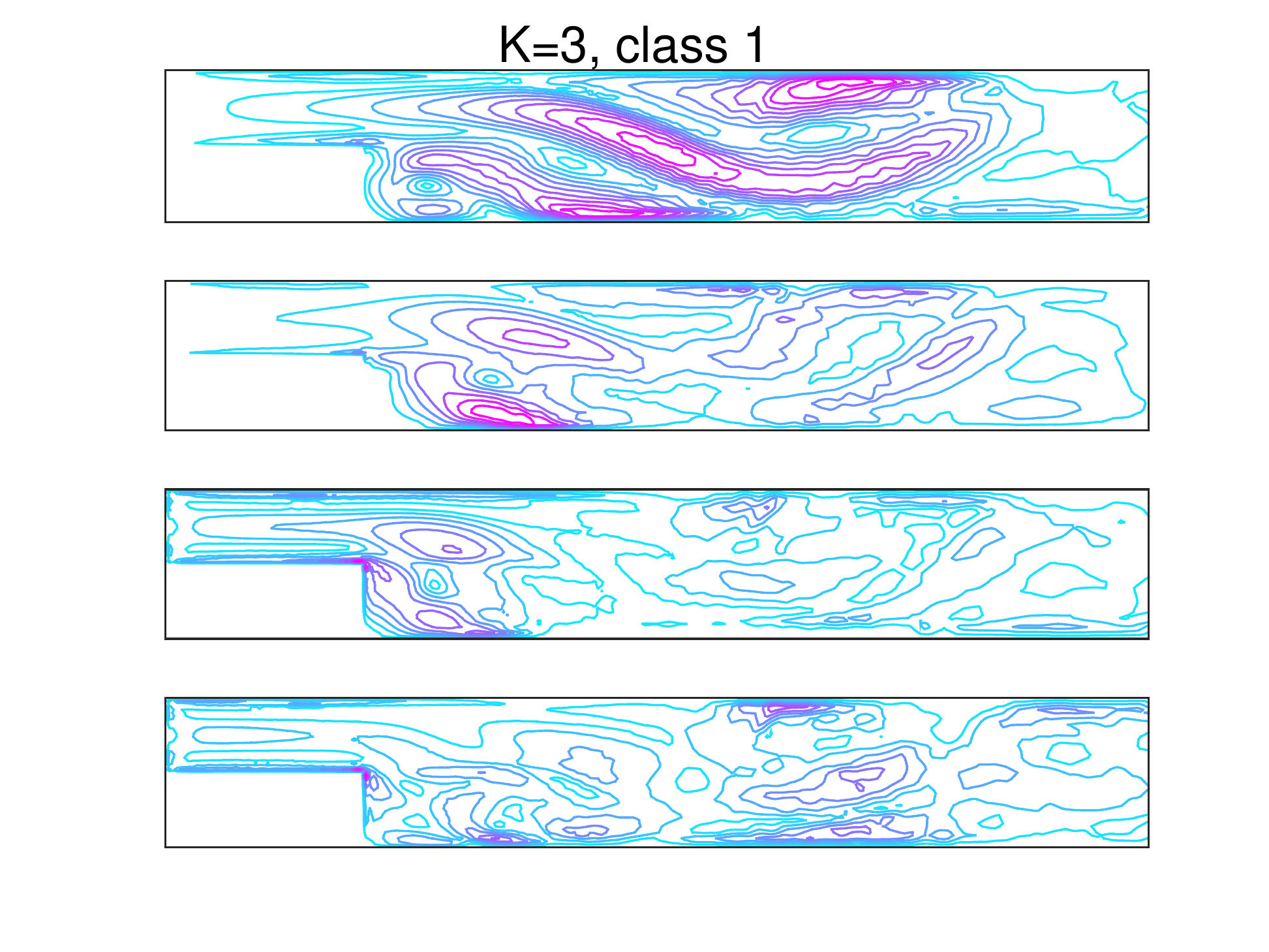}  
      \includegraphics[width=0.30 \textwidth,trim=50 30 50 0,clip]{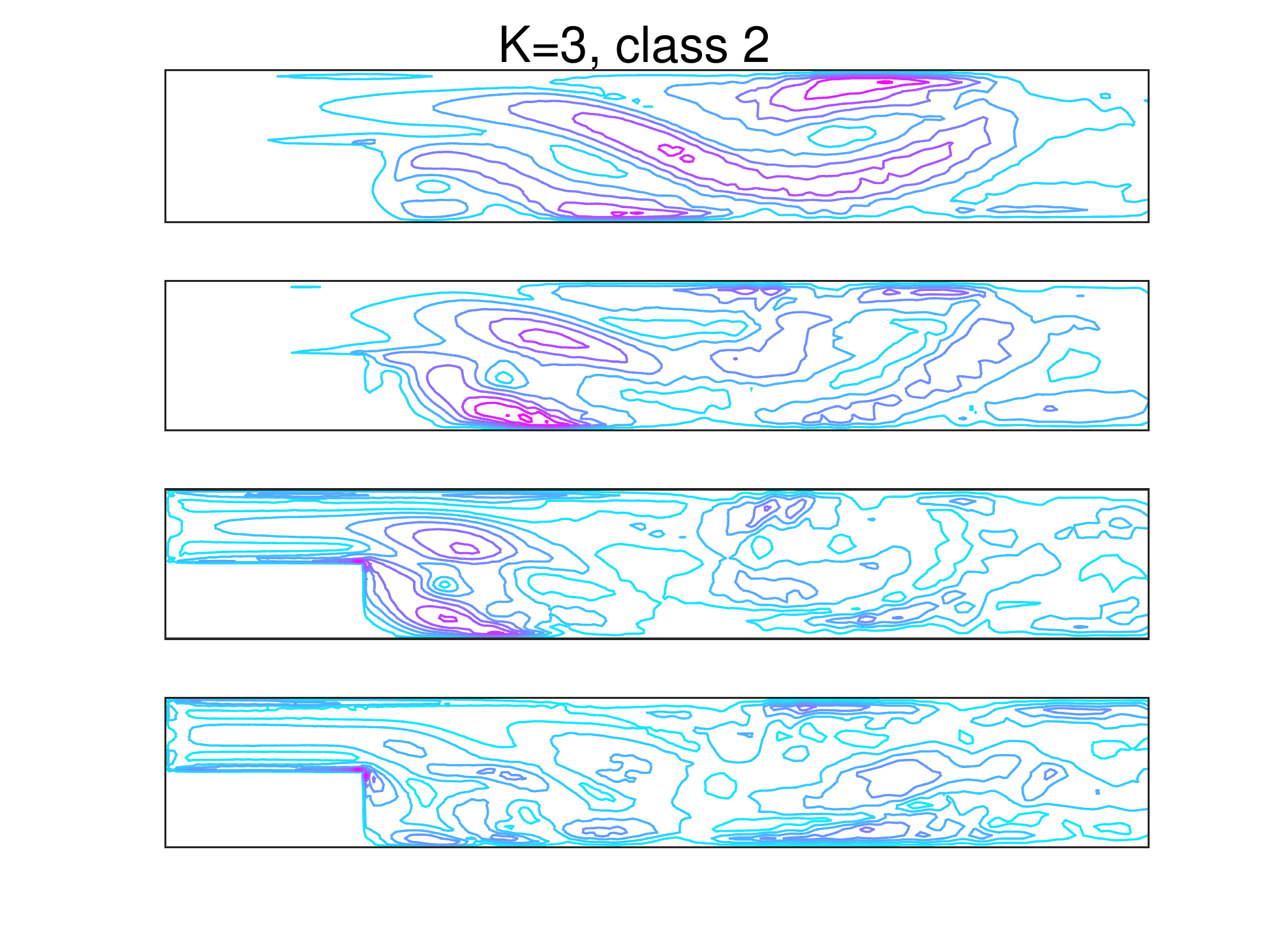} 
      \includegraphics[width=0.30 \textwidth,trim=50 30 50 0,clip]{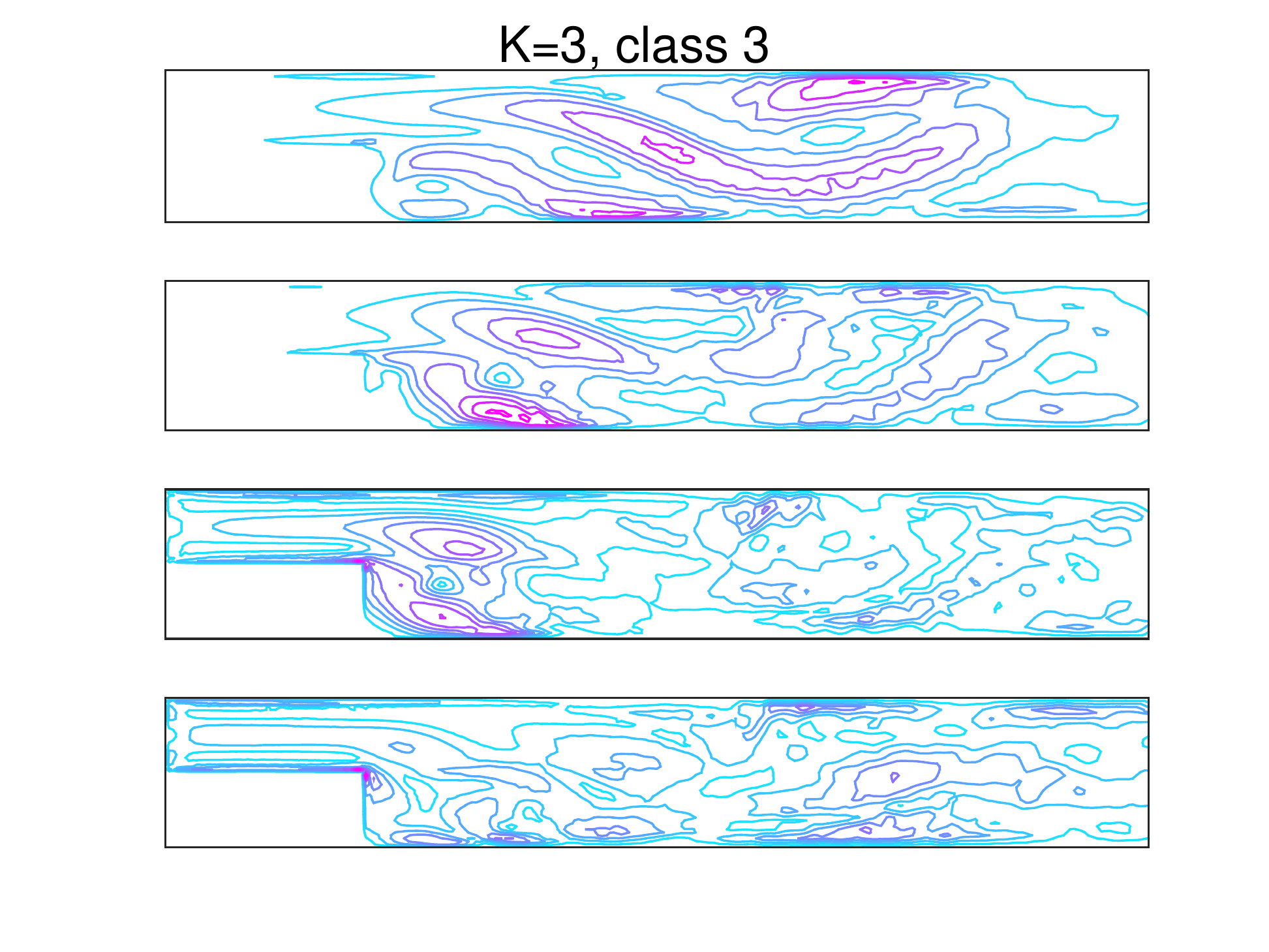} 
      \caption{Contours of the first four CPOD basis functions in each class for $K=1,2$ and $3$}     \label{F_PODbasis2}
\end{figure}        
        
Table~\ref{T_train_err2} gives the estimated error of the CPOD-based SROM by using 400 labelled training data. Obviously, from the perspective of expectation, the accuracy of our SROM increases with the increase of $K$.  The variance of absolute error is also increasing, but only slightly in terms of the relative error.  Figure~\ref{F_train_simula2} shows two samples in the training data and their errors of CPOD approximate solutions.        
        
\begin{table}[ht]
      \centering
      \small
      \begin{spacing}{1.3}
      \caption{Error estimates of CPOD-based SROM by using 400 labelled training data $\widehat{U}$ }
      \label{T_train_err2}
      \begin{tabular}{@{}ccccc@{}}
            \hline
            $K$ & $\widetilde{\mathcal{E}}_K $ & $\widetilde{\mathcal{E}}_K^r $ & $\widetilde{\mathcal{V}}_K $ & $\widetilde{\mathcal{V}}_K^r $   \\
            \hline
            1 &  0.6242  &  1.9596\%  &  0.0454  &  0.0406\%     \\ 
            2 &  0.5915  &  1.8572\%  &  0.0515  &  0.0499\%     \\
            3 &  0.5456  &  1.7731\%  &  0.0519  &  0.0515\%     \\
            \hline 
     \end{tabular}
     \end{spacing}
\end{table}
      
\begin{figure}[ht]
      \centering 
      \includegraphics[width=0.29 \textwidth,trim=5 2 40 20,clip]{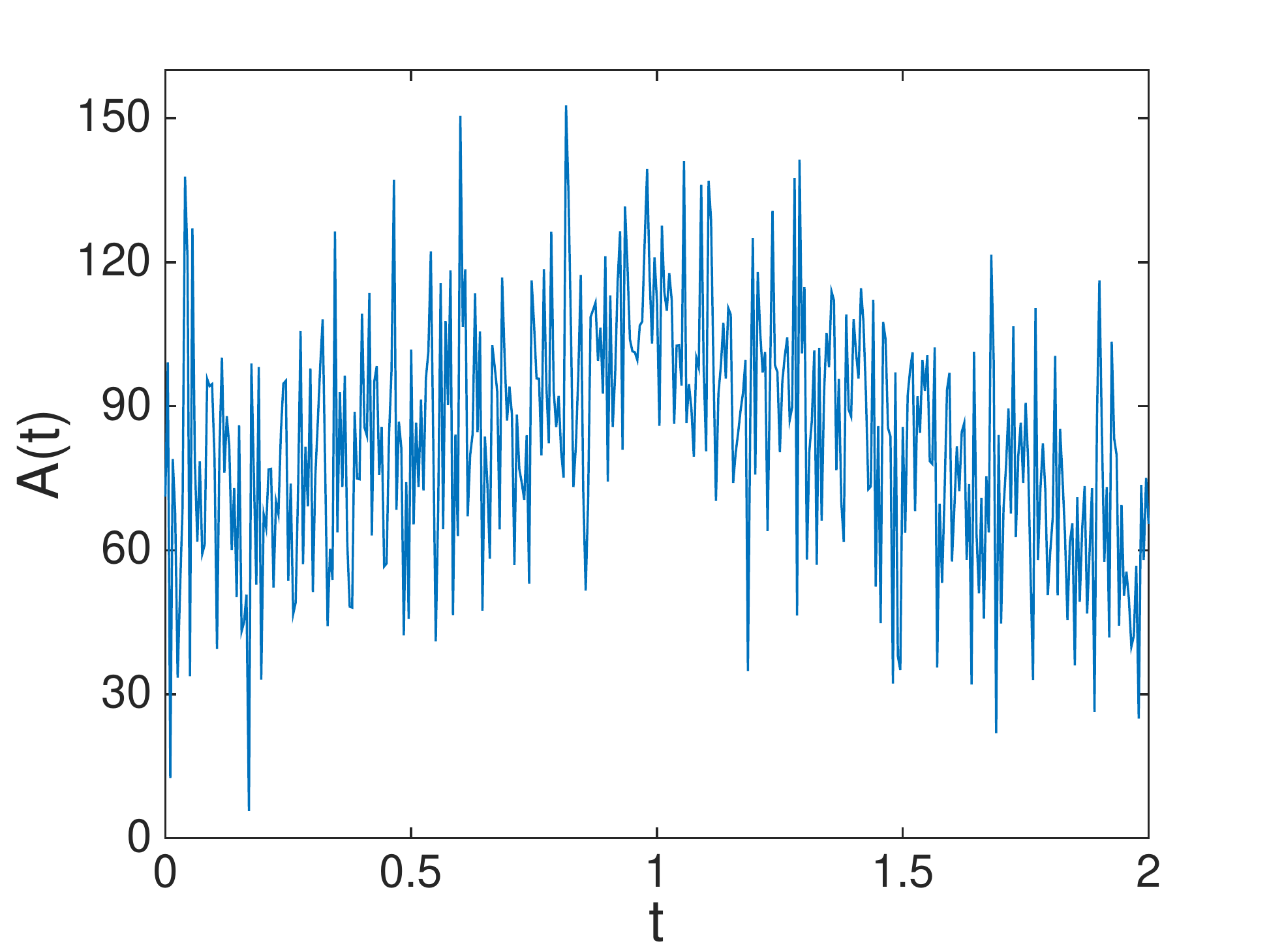}  
      \includegraphics[width=0.30 \textwidth,trim=40 20 40 20,clip]{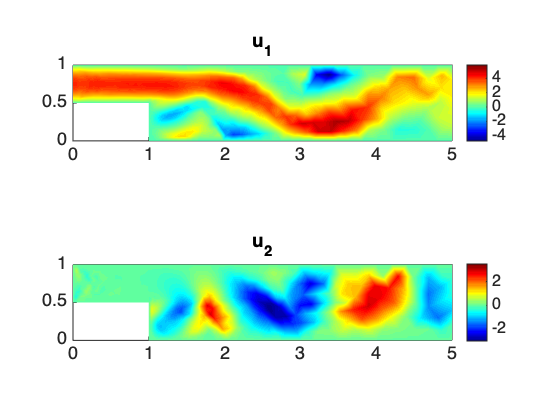}
      \quad
      \begin{rotate}{90}
             \hspace{25pt}   \scriptsize $ \| \mathbf{u} - \widetilde{ \mathbf{u}}^K \|^2_{L^2(D)}$
      \end{rotate}
      \includegraphics[width=0.30 \textwidth,trim=20 3 40 20,clip]{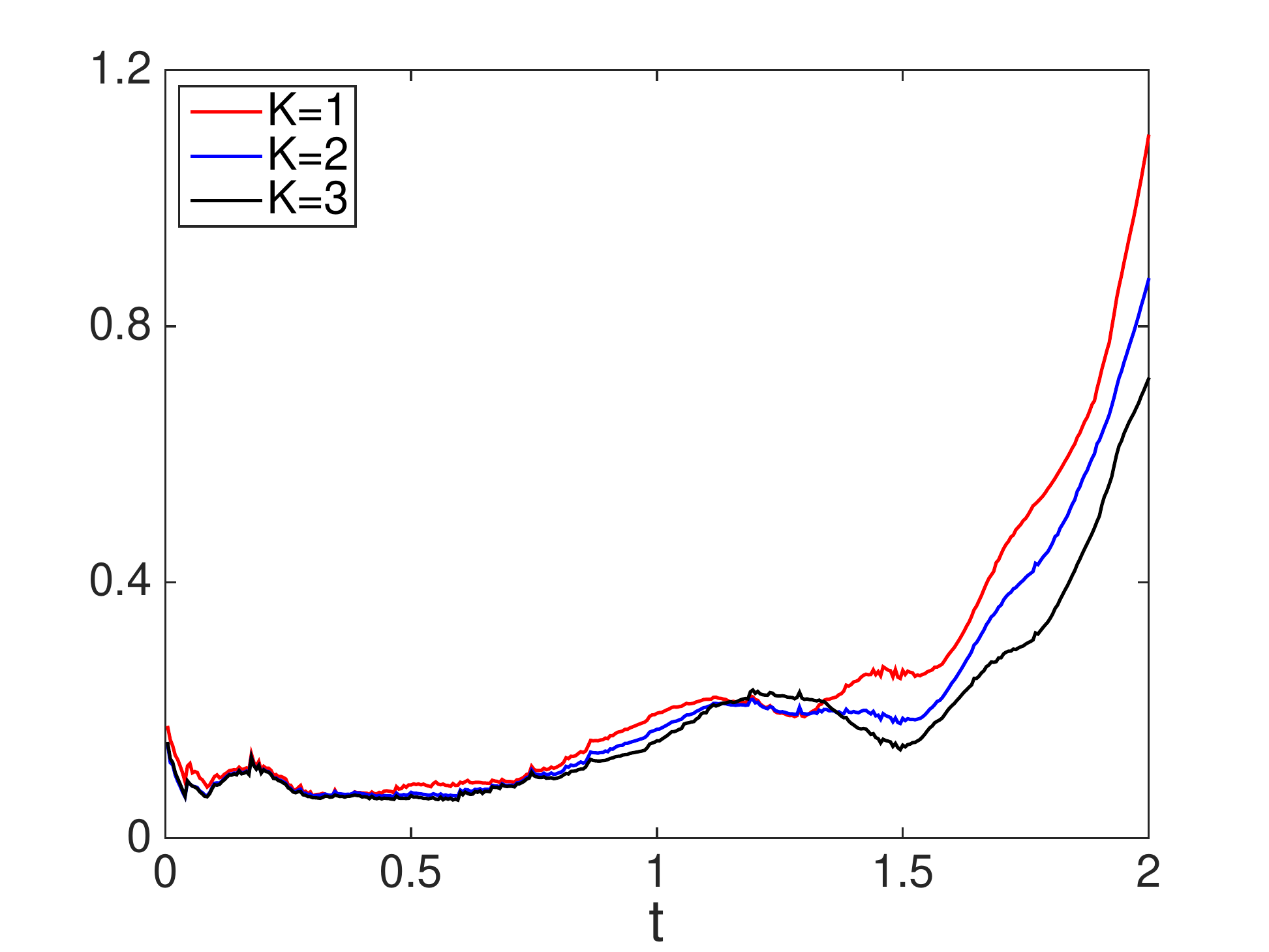}  \\
      \includegraphics[width=0.29 \textwidth,trim=5 2 40 20,clip]{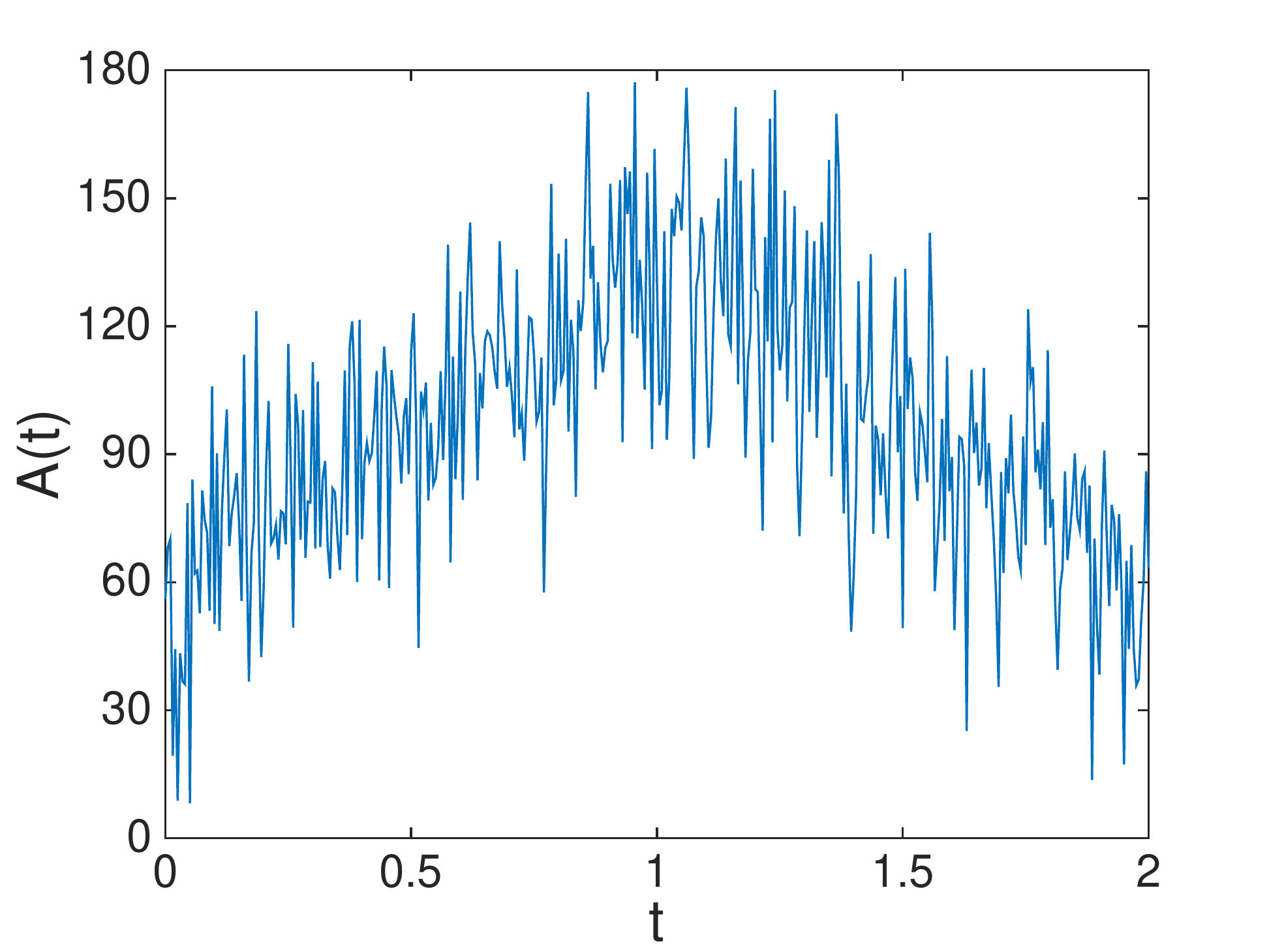} 
      \includegraphics[width=0.30 \textwidth,trim=40 20 40 20,clip]{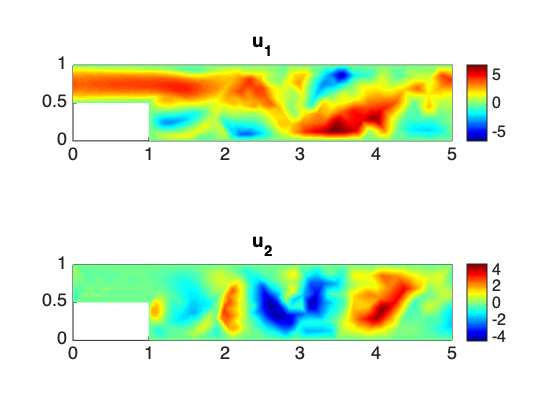}  
      \quad
      \begin{rotate}{90}
            \hspace{25pt}   \scriptsize $ \| \mathbf{u} - \widetilde{ \mathbf{u}}^K \|^2_{L^2(D)}$
      \end{rotate}
      \includegraphics[width=0.30 \textwidth,trim=20 3 40 20,clip]{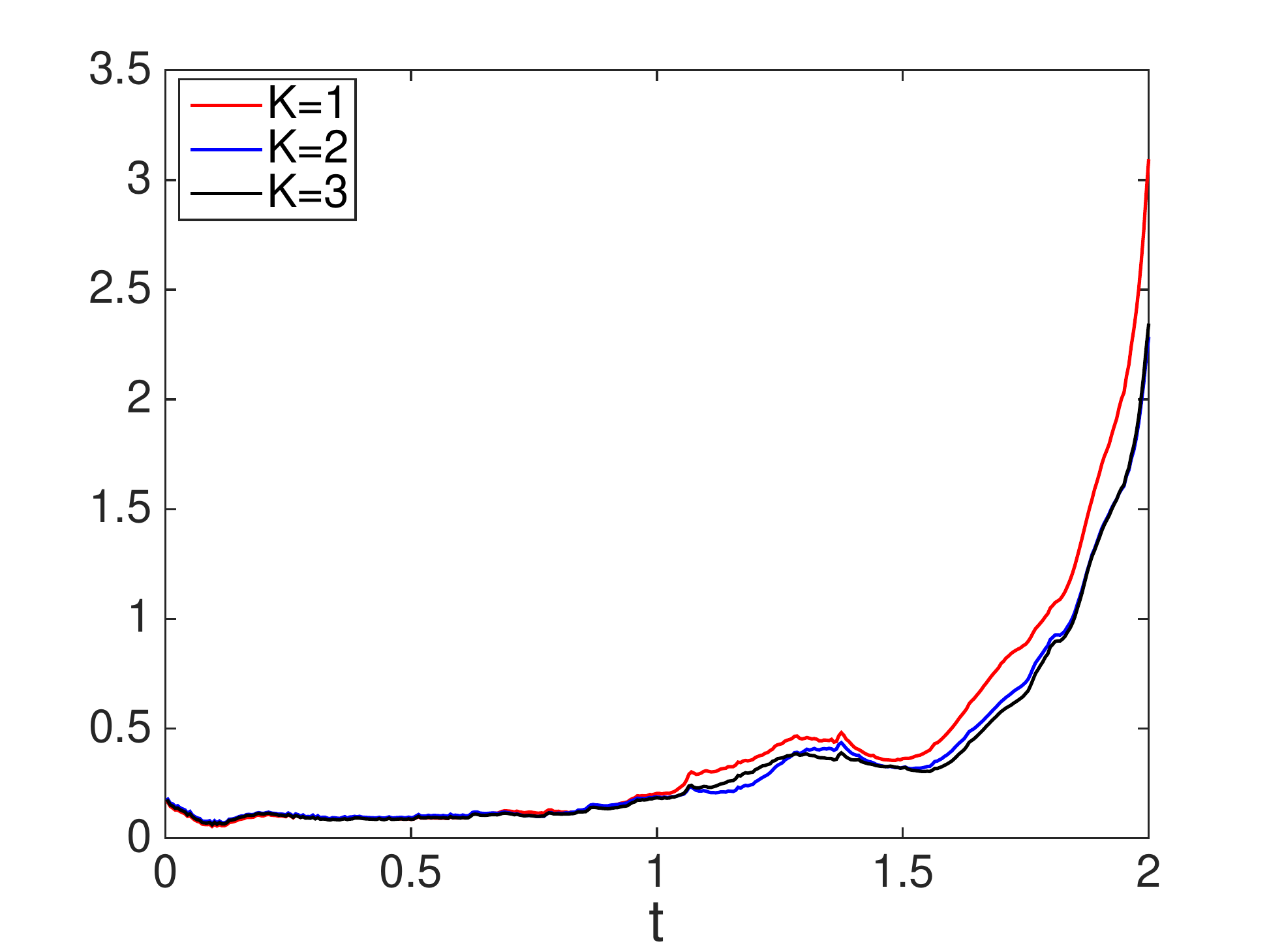}   \\
      \caption{Two realizations of the strength $A(t)$ in stochastic inlet velocity $u_{ \text{in}}$ (left), and their corresponding finite element solutions $\mathbf{u}=(u_1, u_2)^\top$ at time $T$ (middle), and the errors of CPOD approximate solutions (right) }   \label{F_train_simula2}
\end{figure}

\subsubsection{Simulation results of CPOD-NB based SROM}

For these 100 test data, the confusion matrices are shown in Figure~\ref{F_confusion2}. The corresponding error rates of naive Bayes pre-classifier are 15.80\% when $K=2$ and 31.99\% when $K=3$. Although the error rate of the pre-classifier is higher for the high-dimensional data affected by white noise, our SROM can still maintain its advantages within the acceptable range. The errors of the CPOD-NB based SROM estimated by using the test data are given in Table~\ref{T_test_err2}. It is clearly that under the influence of misjudgment samples, our SROM still has a significant improvement compared to the standard POD method. The errors of four samples in test set are shown in Figure~\ref{F_test_err2}.
       
\begin{figure}[ht]
      \centering 
      \begin{rotate}{90}
            \hspace{35pt}   \footnotesize Predicted label
      \end{rotate}
      \includegraphics[width=0.3 \textwidth,trim=0 0 0 2,clip]{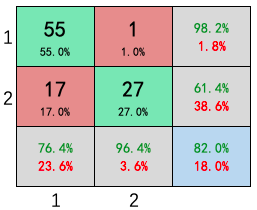}  
      \qquad
      \begin{rotate}{90}
            \hspace{35pt}   \footnotesize Predicted label
      \end{rotate}
      \includegraphics[width=0.3 \textwidth,trim=0 0 0 2, clip]{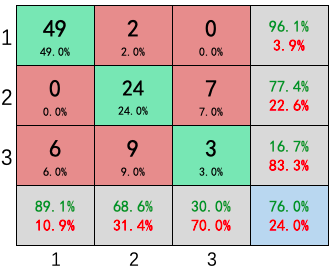} \\
      \footnotesize   True  label    \hspace{120pt}  True  label  \hspace{30pt}
      \caption{Confusion matrices of test data set with 100 samples for $K=2$ and $3$ }   \label{F_confusion2}
\end{figure}        

\begin{table}[H]
\begin{center}
\begin{minipage}{\textwidth}
\caption{Error estimates of the CPOD-NB based SROM by using 100 test samples under the true labels (left) and predicted labels (right)} \label{T_test_err2}
\begin{tabular*}{\textwidth}{@{\extracolsep{\fill}}lcccccccc@{\extracolsep{\fill}}}
\toprule%
& \multicolumn{4}{@{}c@{}}{True labels} & \multicolumn{4}{@{}c@{}}{Predicted labels} \\\cmidrule{2-5}\cmidrule{6-9}%
$K$ & $\widetilde{\mathcal{E}}_K$ & $\widetilde{\mathcal{E}}_K^r$  & $\widetilde{\mathcal{V}}_K$ & $\widetilde{\mathcal{V}}_K^r$ & $\widetilde{\mathcal{E}}_K$ & $\widetilde{\mathcal{E}}_K^r$  & $\widetilde{\mathcal{V}}_K$ & $\widetilde{\mathcal{V}}_K^r$ \\
\midrule
1  &  0.6319  &  1.9262\%  &  0.0478  &  0.0125\%  &  0.6319  &  1.9262\%  &  0.0478  &  0.0125\%        \\
2  &  0.5888  &  1.7472\%  &  0.0510  &  0.0080\%  &  0.6115  &  1.8178\%  &  0.0555  &  0.0081\%        \\
3  &  0.5464  &  1.6587\%  &  0.0502  &  0.0086\%  &  0.5722  &  1.7240\%  &  0.0594  &   0.0090\%       \\ 
\midrule
\end{tabular*}
\end{minipage}
\end{center}
\end{table}

\begin{figure}[ht]
      \centering 
      \begin{rotate}{90}
              \hspace{15pt} \scriptsize $ \| \mathbf{u} - \widetilde{ \mathbf{u}}^K \|^2_{L^2(D)}$
      \end{rotate}
      \includegraphics[width=0.24 \textwidth,trim=30 20 40 20,clip]{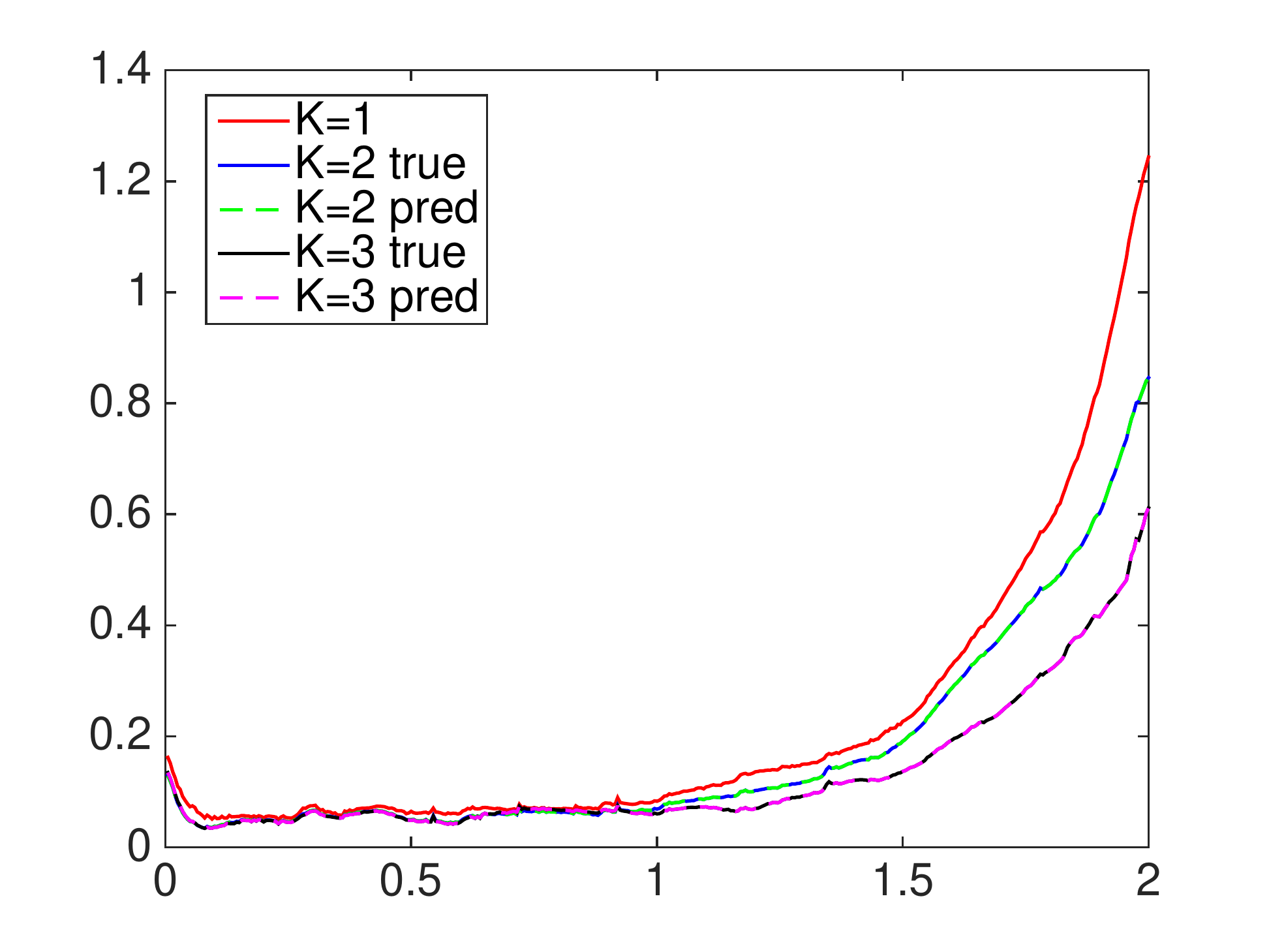}  
      \includegraphics[width=0.24 \textwidth,trim=30 20 40 20,clip]{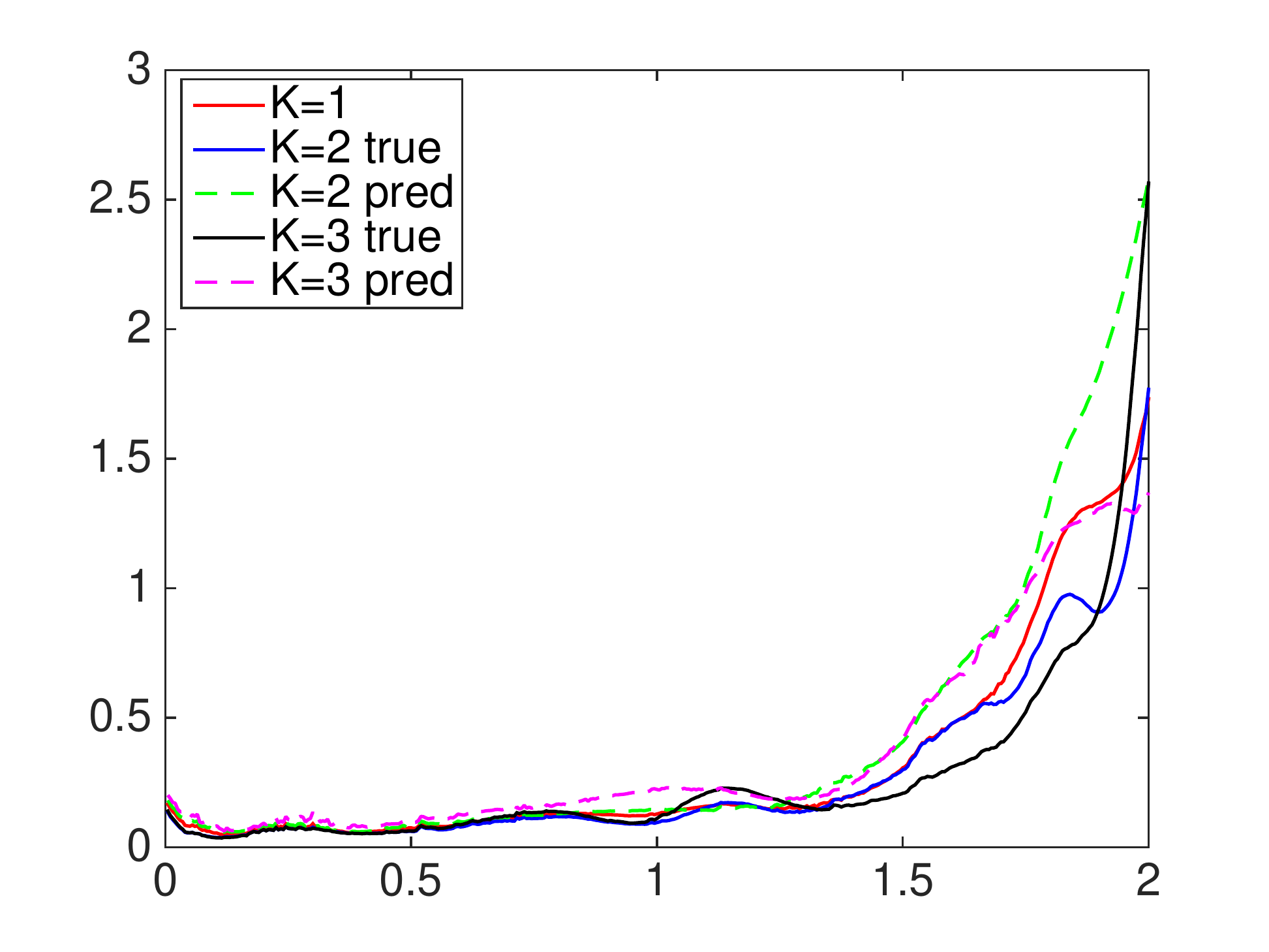} 
      \includegraphics[width=0.24 \textwidth,trim=30 20 40 20,clip]{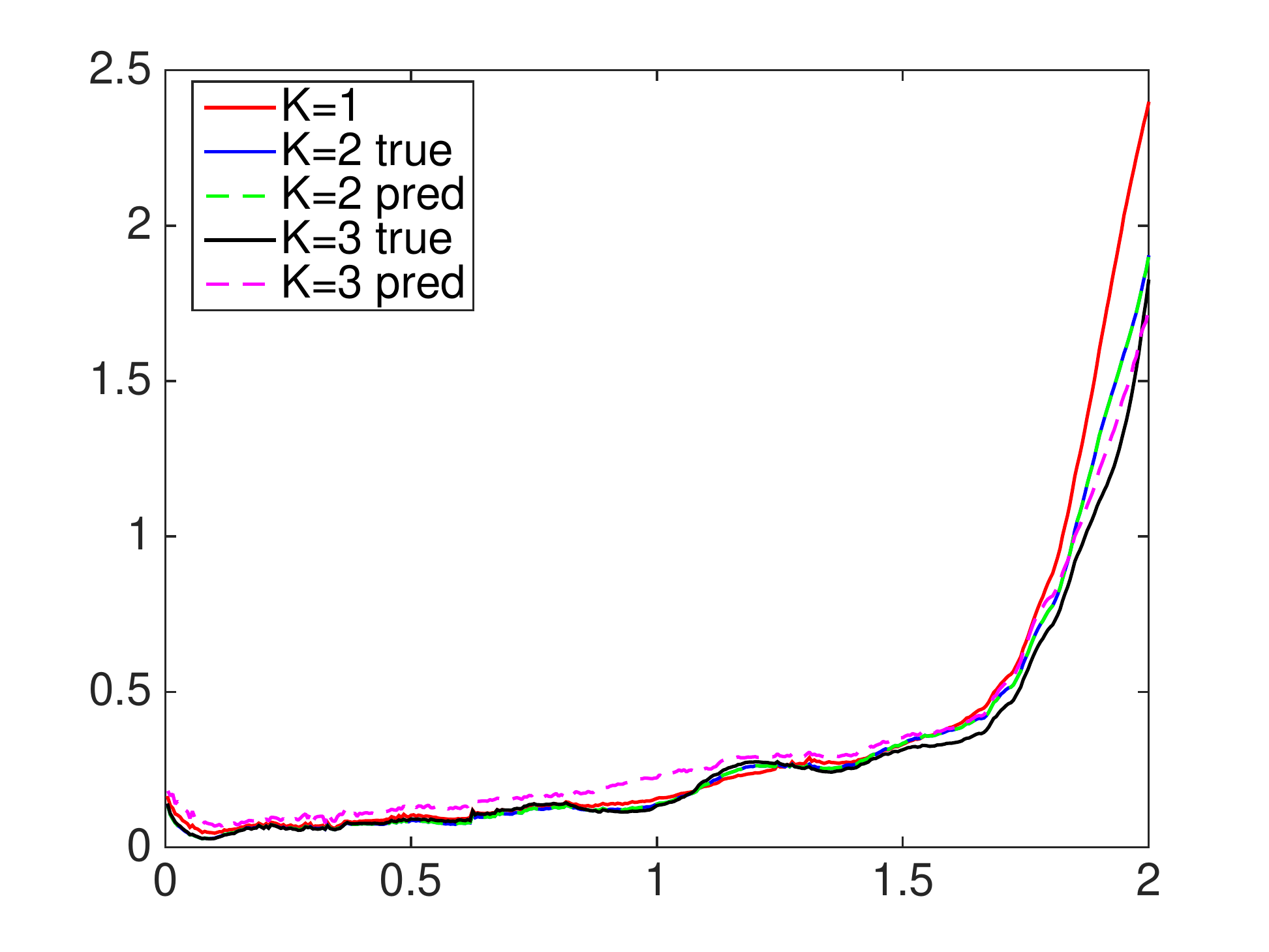}
      \includegraphics[width=0.24 \textwidth,trim=30 20 40 20,clip]{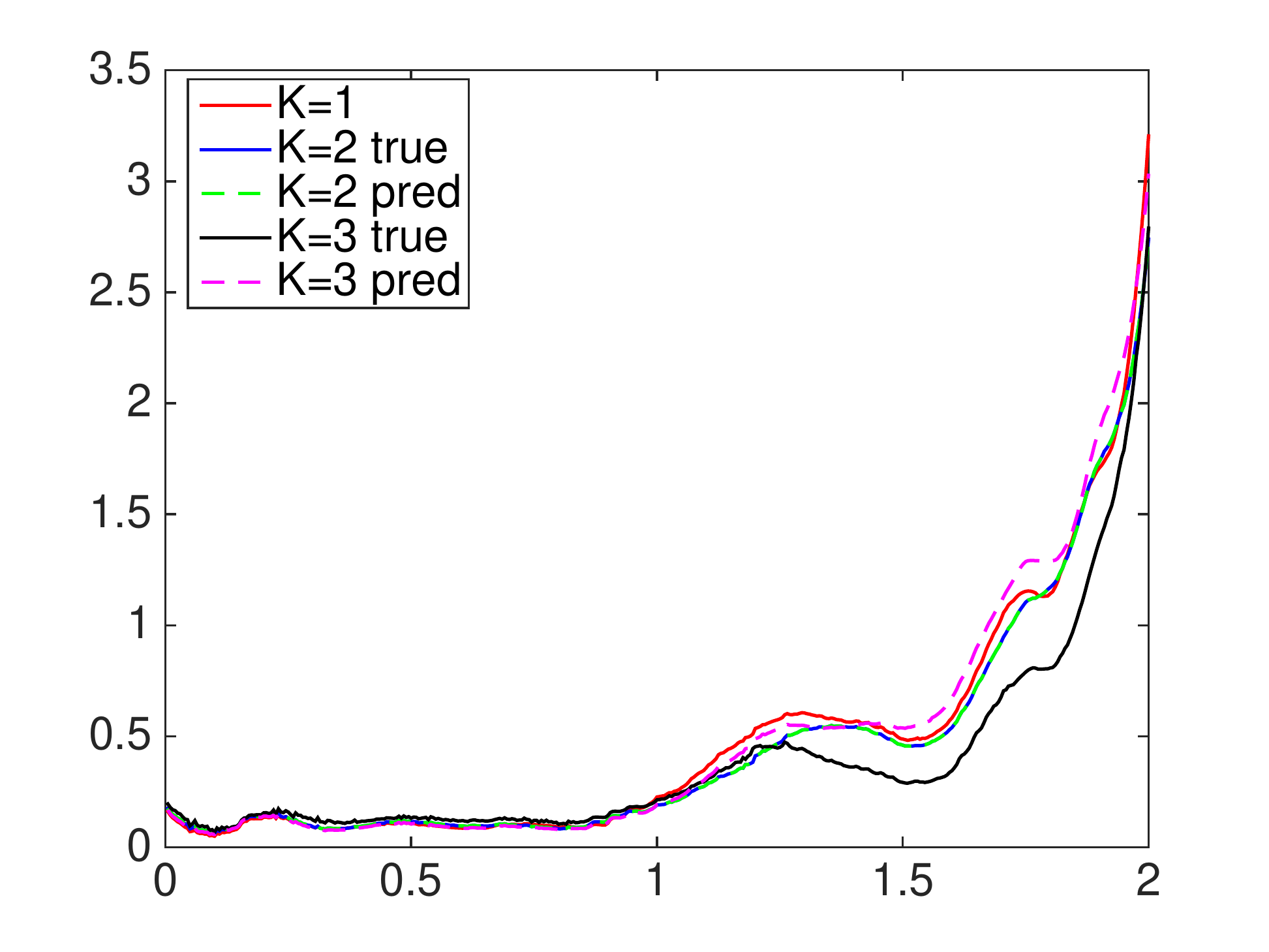}  \\ 
      \footnotesize  \hspace{1em}   $t$  \hspace{12em}  $t$   \hspace{12em}  $t$  \hspace{12em}  $t$   \hspace{10em}  \\
      \caption{The  errors of the CPOD-NB approximate solutions of four samples in the test data}    \label{F_test_err2}
\end{figure}      
      
Compared with the results in section \ref{sec_eg1}, it can be seen from Tables~\ref{T_train_err2} and \ref{T_test_err2} that the improvement of our SROM in this experiment is relatively limited, mainly includes the following two reasons. 
First of all,  although affected by the white noise, the strength $A$ still shows a hat-shaped trend as a whole, so the similarity between the realizations of the velocity field is higher, thereby the resulting CPOD basis functions are less different from the standard POD basis functions.
Secondly, the stronger randomness of input $\bm{\xi}$ leads to worse classification results, which increases the influence of misjudgment.

\section{Conclusion}   \label{Section_Conclusion}

We develop a method for model reduction by combining clustering and classification. According to the mapping relationship between input and output of the system, we use the modified t-gCVT method to cluster the output samples and generate several sets of CPOD basis functions, then use the clustering results to learn the classification mechanism of input. For a given input, compared to the standard POD basis functions, the best-matched CPOD basis functions can reduce the model better.  However, as the number of clusters increases, not only the computational complexity increase due to a large number of distance calculations, but also the error rate of the pre-classifier increases, which will affect the accuracy of our SROM. Therefore, it is necessary to study the appropriate number of clusters. 
In order to improve the stability of our algorithm, the classification of high-dimensional data is also a subject worth studying in the future, such as combining the state-of-the-art deep learning techniques.
This paper is mainly to provide a prototype of reduced-order modelling by using statistical analysis methods, and this idea can be applied to more complex problems, such as uncertainty quantification, optimal control, etc.


\end{document}